\documentclass[12pt]{article}

\usepackage[T1]{fontenc}
\usepackage{lmodern}

\usepackage{amsmath,amssymb,amsthm}
\usepackage{mathtools}
\usepackage{a4wide}
\usepackage{hyperref}
\hypersetup{
    colorlinks=true,
    citecolor=blue,
    filecolor=black,
    linkcolor=blue,
    urlcolor=blue,
    pdftitle={Classification of Hom-Lie structures on finite-dimensional simple Lie algebras},
    pdfauthor={Jiacheng Sun, Chi Zhang, and Haoran Zhu},
    pdfkeywords={Hom-Lie structure, simple Lie algebra, Zassenhaus algebra, Cartan type, Melikian algebra}
}

\DeclareMathOperator{\Hom}{Hom}
\DeclareMathOperator{\End}{End}
\DeclareMathOperator{\gr}{gr}
\DeclareMathOperator{\Span}{span}
\DeclareMathOperator{\diver}{div}
\DeclareMathOperator{\Cent}{C}
\DeclareMathOperator{\HomLie}{HomLie}

\newcommand{\K}{\mathbb K}
\newcommand{\id}{\operatorname{id}}

\newcommand{\OO}{\mathcal O}
\newcommand{\uln}{\underline n}

\newtheorem{theorem}{Theorem}[section]
\newtheorem{lemma}[theorem]{Lemma}
\newtheorem{proposition}[theorem]{Proposition}
\newtheorem{corollary}[theorem]{Corollary}
\theoremstyle{remark}
\newtheorem{remark}[theorem]{Remark}

\numberwithin{equation}{section}

\title{\Large\bfseries Classification of Hom--Lie structures on
finite-dimensional simple Lie algebras}
\author{Jiacheng Sun\qquad Chi Zhang\qquad Haoran Zhu}
\date{}

\begin{document}

\maketitle

\begin{abstract}
We classify the Hom--Lie structures on finite-dimensional simple Lie algebras
over an algebraically closed field of characteristic different from \(2\) and
\(3\).  Except for the three-dimensional simple Lie algebra and the Zassenhaus
algebras, every such structure is a scalar multiple of the identity.  In the
two exceptional cases, we determine the full Hom--Lie space: it has dimension
six for the three-dimensional algebra, while for a Zassenhaus algebra it
consists precisely of the multiplication operators arising from the
corresponding divided-power algebra.  This proves a conjecture of Makhlouf and
Zusmanovich.
\end{abstract}

\medskip
\noindent\textit{2020 Mathematics Subject Classification.}
Primary 17B61, 17B20, 17B50; Secondary 17B66, 17B70.

\smallskip
\noindent\textit{Key words and phrases.}
Hom--Lie structure, finite-dimensional simple Lie algebra, Zassenhaus algebra,
Cartan-type Lie algebra, Melikian algebra, modular Lie algebra.

\vspace{5mm}

\section{Introduction}

Let \(L\) be a Lie algebra over a field \(\K\).  A \emph{Hom--Lie structure}
on \(L\) is a linear map \(\varphi\colon L\to L\) satisfying
\[
 [[x,y],\varphi(z)]+[[z,x],\varphi(y)]+[[y,z],\varphi(x)]=0
 \qquad (x,y,z\in L).
\]
We write \(\HomLie(L)\) for the vector space of all such maps.  Every scalar
multiple of the identity belongs to \(\HomLie(L)\).  The problem is to decide
whether these exhaust \(\HomLie(L)\) and, if not, to determine the whole
space.  

Hom-type identities arose from the study of \(q\)-deformations and
\(\sigma\)-derivations by Hartwig, Larsson and Silvestrov~\cite{HLS}; their
general theory was subsequently developed by Makhlouf and
Silvestrov~\cite{MS}.  For a fixed Lie bracket, the Hom--Jacobi identity is
linear in \(\varphi\), so \(\HomLie(L)\) is a linear subspace of
\(\End_{\K}(L)\).  Since scalar maps are always present, the substantive
question is whether \(L\) admits a non-scalar Hom--Lie structure and, if so,
to determine all such structures.  This has been one of the basic
classification problems in the subject from its early development.

For finite-dimensional semisimple Lie algebras in characteristic zero, Jin
and Li first considered Hom--Lie structures under the additional assumption
that \(\varphi\) preserve the bracket~\cite{JL}.  Xie, Jin and Liu removed
that assumption~\cite{XJL}.  Xie and Liu treated simple graded Lie algebras
of finite growth~\cite{XL}, while Makhlouf and Zusmanovich gave new proofs for
simple Lie algebras of classical type and extended the problem to current and
affine Kac--Moody algebras
\cite[Theorems~1--3]{MakhloufZusmanovich2018}.  In particular, every Hom--Lie
structure on a finite-dimensional simple Lie algebra in characteristic zero,
other than \(\mathfrak{sl}_2\), is scalar.  Makhlouf and Zusmanovich also
conjectured that, over a field of characteristic different from \(2\) and
\(3\), the only further finite-dimensional simple examples are the
Zassenhaus algebras
\cite[Conjecture following Theorem~1]{MakhloufZusmanovich2018}. 

In positive characteristic, Wu and Liu proved that every non-zero
bracket-preserving Hom--Lie structure on a graded Cartan-type algebra is the
identity~\cite{WuLiu2015}.  The unrestricted problem is substantially
different: a Hom--Lie structure need not preserve the bracket or the standard
grading.  Moreover, the classification of finite-dimensional simple Lie
algebras in characteristic \(p>3\) includes filtered Cartan-type algebras and,
when \(p=5\), the Melikian algebras
\cite{BlockWilson1988,PremetStrade2007,PremetStrade2008}; see also
\cite{PremetStrade2006}.

We resolve this classification problem for finite-dimensional simple Lie algebras over an
algebraically closed field of characteristic different from is not \(2\) or \(3\).
More precisely, we prove the conjecture of Makhlouf and Zusmanovich and
determine the Hom--Lie space in each exceptional case.  We use the ordered basis
\(\mathbf e=(e_-,h,e_+)\) of \(\mathfrak{sl}_2\), normalised by
\([e_-,h]=-e_-\), \([e_+,h]=e_+\), and \([e_-,e_+]=h\).  When
\(\operatorname{char}\K=p>3\), let \(\OO(1;n)\) be the divided-power algebra
with basis \(\mathcal B_n=(x^{(0)},\ldots,x^{(p^n-1)})\) and multiplication
\[
 x^{(a)}x^{(b)}
 =
 \begin{cases}
  \binom{a+b}{a}x^{(a+b)},&a+b<p^n,\\
  0,&a+b\geqslant p^n.
 \end{cases}
\]
Let \(\partial(x^{(a)})=x^{(a-1)}\), where \(x^{(-1)}=0\).  The Zassenhaus
algebra is \(W(1;n)=\OO(1;n)\partial\).  For \(u\in\OO(1;n)\), denote by
\(M_u\) the endomorphism of \(W(1;n)\) given by
\(M_u(f\partial)=uf\partial\).

\begin{theorem}\label{thm:main}
Let \(\K\) be an algebraically closed field with
\(\operatorname{char}\K\neq2,3\), and let \(L\) be a finite-dimensional simple
Lie algebra over \(\K\).  After choosing an isomorphism in \textup{(i)} or
\textup{(ii)}, identify the corresponding endomorphism spaces by conjugation.
Then the following hold.
\begin{enumerate}
\item[\textup{(i)}]
If \(L\simeq\mathfrak{sl}_2\), then, with respect to the ordered basis
\(\mathbf e=(e_-,h,e_+)\),
\[
 \HomLie(L)=
 \left\{
 \left.
 \begin{pmatrix}
  a&b&c\\ d&\eta&b\\ e&d&a
 \end{pmatrix}
 \,\right|\,
 a,b,c,d,e,\eta\in\K
 \right\}.
\]
\item[\textup{(ii)}]
If \(\operatorname{char}\K=p>3\) and \(L\simeq W(1;n)\) for some \(n\geqslant1\),
then
\[
 \HomLie(L)=\{M_u\mid u\in\OO(1;n)\}.
\]
The map \(u\mapsto M_u\) is an algebra isomorphism from \(\OO(1;n)\) onto this
space, with composition as multiplication.  In particular,
\(\dim_{\K}\HomLie(L)=p^n\).
\item[\textup{(iii)}]
In every other case, \(\HomLie(L)=\K\id_L\).
\end{enumerate}
\end{theorem}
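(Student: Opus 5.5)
Viewed as a map \(L\to\Hom(\Lambda^2L,L)\), the Hom--Jacobi identity is linear in \(\varphi\). My plan is to reduce it to a statement about \(\ad\)-compatible maps using a torus, and then treat the classes of the classification of \cite{BlockWilson1988,PremetStrade2007,PremetStrade2008} one family at a time.

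The first step is a general reduction. Fix a maximal torus \(T\subset L\), or a toral subalgebra in the modular case, together with its root decomposition \(L=\bigoplus_\alpha L_\alpha\). Taking \(x,y\in T\) kills the first term, and the identity becomes
\[
 [[z,x],\varphi(y)]=[[z,y],\varphi(x)].
\]
Taking \(x=y\) does nothing, so I would instead take \(z\in L_\alpha\). This gives \(\alpha(x)[z,\varphi(y)]=\alpha(y)[z,\varphi(x)]\), which forces strong constraints on \(\varphi|_T\). Next I would decompose \(\varphi=\sum_\gamma\varphi_\gamma\) into \(T\)-weight components, with \(\varphi_\gamma(L_\alpha)\subset L_{\alpha+\gamma}\). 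Since \(T\) acts on \(\End L\) and the Hom--Jacobi identity is \(\ad T\)-equivariant (because \(\ad t\) is a derivation), each \(\varphi_\gamma\) is again Hom--Lie. Hence it suffices to classify homogeneous Hom--Lie maps of each weight \(\gamma\).

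For part (i), I would solve the linear system in the basis \(\mathbf e\) directly. It is finite, and the weight decomposition splits it into weights \(0,\pm1,\pm2\). Each piece is a tiny system, and the result is the six-parameter family shown. For part (ii), I would first check that \(M_u\) is Hom--Lie. Writing \(D_i=f_i\partial\), the identity becomes the cyclic sum of \([[D_1,D_2],uD_3]\). Using \([fD,gD]\)-type Leibniz expansions, the terms involving \(\partial u\) cancel cyclically, and what remains is \(u\) times the Jacobiator, which vanishes. Composition gives \(M_uM_v=M_{uv}\), so \(u\mapsto M_u\) is an injective algebra map. For the converse I would use the grading of \(W(1;n)\) and the torus \(T=\K x\partial\). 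Weight-homogeneous \(\varphi\) of degree \(\gamma\) sends \(x^{(a)}\partial\) to \(c_a x^{(a+\gamma)}\partial\). Plugging in triples containing \(\partial\) and \(x\partial\) yields recursions on the \(c_a\) forcing \(c_a=\lambda\binom{a+\gamma}{a}\), that is \(\varphi=\lambda M_{x^{(\gamma)}}\). Since \(T\) is one-dimensional, weights are only defined mod \(p\), so I would instead use the \(\mathbb Z\)-grading together with the filtration and an induction on degree.

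For part (iii) in characteristic \(0\), I would invoke \cite[Theorem~1]{MakhloufZusmanovich2018}. In characteristic \(p>3\) I would run through the classical types (the same root argument as in characteristic zero, using rank at least \(2\)), then \(W(m;\uln)\) with \(m\ge2\), \(S\), \(H\), \(K\), their filtered deformations, and the Melikian algebras. The common device is the following. For rank at least \(2\), or for Cartan type algebras with \(m\ge2\), there exist two root vectors \(z\in L_\alpha\) and \(z'\in L_\beta\), with \(\alpha,\beta\) independent on \(T\). The torus identity above, applied to both, forces \(\varphi|_T\) to be scalar modulo the centralizer of \(L\), which is zero. I would then propagate via \([[x,y],\varphi(z)]\) with \(x\in T\) to show \(\varphi-\lambda\id\) kills every root space. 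For graded Cartan type algebras I would pass to \(\gr L\) with respect to the standard filtration. The point is that the Hom--Lie condition on a filtration-compatible leading part is again a Hom--Lie condition on \(\gr L\), so it suffices to treat graded algebras first, then lift, using that \(L_{-1}\) acts transitively.

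I expect the main obstacle to be the filtered (non-graded) Cartan type algebras and the Melikian algebras. There, a general \(\varphi\) need not respect the filtration, and the torus may be non-standard. I would first try to prove that any Hom--Lie \(\varphi\) preserves the natural filtration, via the characterization of \(L_{(0)}\) as the unique maximal subalgebra of minimal codimension. The leading-term argument would then reduce to graded \(W,S,H,K\) and the Melikian algebra, where the independence of two roots already kills all non-scalar components once \(m\ge2\).
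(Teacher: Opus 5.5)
Part (i), the check that each \(M_u\) is Hom--Lie, and the reduction to homogeneous components are fine. The step you single out as the main obstacle, however, dissolves under an idea you are missing, and your proposed fix would not work.

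\textbf{The filtered reduction.} You want to show first that a Hom--Lie map preserves the standard filtration, using that \(L_{(0)}\) is the unique maximal subalgebra of minimal codimension. There is no mechanism for this. The map \(\varphi\) is not a homomorphism, so nothing forces \(\varphi(L_{(0)})\) to be a subalgebra, and intrinsic characterisations of \(L_{(0)}\) only constrain automorphisms.

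No preservation is needed. Since the filtration is finite, every non-zero \(\varphi\in\End_{\K}(L)\) has a filtration degree \(r\in\mathbb Z\), possibly negative. Its principal symbol \(\gr_r\varphi\) is a non-zero homogeneous Hom--Lie structure on \(\gr L\), because all three terms of the identity lie in \(L_{(i+j+k+r)}\) (Lemma~\ref{lem:symbol-HomLie}). If \(\HomLie(\gr L)=\K\id\), homogeneity forces \(r=0\) and \(\gr_0\varphi=\lambda\id\). Then \(\varphi-\lambda\id_L\), if non-zero, has positive filtration degree, and its symbol would be a non-zero element of \(\HomLie(\gr L)\) of positive degree. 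Hence \(\varphi=\lambda\id_L\) (Lemma~\ref{lem:filtered}). Transitivity of \(L_{-1}\) plays no role. In the paper the Melikian case is the graded algebra \(M(2;\uln)\), so no filtered issue arises there either.

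\textbf{The graded Cartan-type and Melikian cases.} These are not actually argued. Your claim that two independent roots ``kill all non-scalar components'' is transplanted from the classical case. There it rests on one-dimensional root spaces (a weight component sends \(e_\beta\) to a multiple of a single root vector) and on facts such as Lemma~\ref{lem:classical-root-facts}. Even there, two independent roots only give \(\alpha\circ\varphi|_H=\lambda_\alpha\alpha\) with possibly different \(\lambda_\alpha\); one needs adjacent roots with \(\alpha+\beta\in R\) and connectedness of the Dynkin diagram.

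For \(W(m;\uln)\), \(S\), \(H\), \(K\) and \(M(2;\uln)\), root spaces relative to a torus are in general multi-dimensional, so such identities do not determine \(\psi=\varphi-\lambda\id\). For \(H(2;\uln)\) the standard torus \(\K D_H(q_1p_1)\) is one-dimensional, so with that choice there are not even two independent roots.

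The paper uses the \(\mathbb Z\)-grading instead. In type \(W\):
\begin{enumerate}
\item the identity on \(\partial_i,\partial_j,x_i\partial_b\), together with \(\Cent_L(L_{-1})=L_{-1}\), gives \(\varphi(L_{-1})\subseteq L_{-1}\);
\item suitable elements of \(L_1\) make \(\varphi|_{L_{-1}}\) scalar;
\item \([L_1,\partial_i]=L_0\) gives \(\psi(L_0)=0\);
\item Lemma~\ref{lem:propagation} finishes.
\end{enumerate}
Types \(S\) and \(H\) are analogous. Type \(K\) starts from \(L_{-2}=\K c\) and uses \(\Cent_G(L_{-1})=\K c\). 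The Melikian case adds the module action of Lemma~\ref{lem:HomLie-module} and Lemmas~\ref{lem:M-cent1} and~\ref{lem:M-cent2}.

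\textbf{The upper bound for \(W(1;n)\).} This is incomplete. The triples containing both \(\partial\) and \(x\partial\) only express \(c_a\) (\(a\geqslant2\)) through \(c_0,c_1\), and they say nothing about \(c_{-\gamma}\) when \(\gamma<0\). Already in degree \(0\) they leave the two-parameter family \(c_a=(a-1)c_0-(a-2)c_1\), although only scalars survive. The paper instead obtains \(\dim\HomLie(W(1;n))= p^n\) from Zusmanovich's theorem \(\HomLie(W_G)\simeq\K[G]\), via \(W(1;n)\simeq W_{\mathbb F_{p^n}}\).
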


Thus \(\dim_{\K}\HomLie(L)>1\) precisely when \(L\) is isomorphic to
\(\mathfrak{sl}_2\) or to a Zassenhaus algebra.  In the latter case,
composition identifies the Hom--Lie space with the divided-power coefficient
algebra itself.

The classification for simple classical Lie algebras in characteristic zero
is known \cite[Theorem~1]{MakhloufZusmanovich2018}.  The scalar conclusion
outside \(\mathfrak{sl}_2\) was also stated there in characteristic different
from \(2\) and \(3\), but the published arguments do not, as written, cover
every modular Chevalley algebra.  We give an independent root-space proof in
positive characteristic and explain the point in
Remark~\ref{rem:classical-positive-characteristic}.

For the classical algebras, we calculate
\(\HomLie(\mathfrak{sl}_2)\) explicitly and use the Chevalley root-space
relations in every other type.  For the Zassenhaus algebras, a direct
verification together with a theorem of Zusmanovich determines the whole
Hom--Lie space.  Low-degree relations and centraliser calculations give the
scalar conclusion for the remaining graded Cartan-type algebras and for the
Melikian algebras.  We then pass to filtered Cartan type and invoke the
Block--Wilson--Premet--Strade classification.

Unless stated otherwise, \(\K\) is algebraically closed and
\(\operatorname{char}\K\neq2,3\).  In positive characteristic we write
\(\operatorname{char}\K=p>3\).  All tensor products, Hom spaces and
endomorphism spaces are taken over \(\K\).  For \(X\subseteq L\), we write
\(\Cent_L(X)=\{y\in L\mid[y,X]=0\}\).  If \(A\) is a Lie algebra, then
\(A^{(0)}=A\) and \(A^{(i+1)}=[A^{(i)},A^{(i)}]\).

The rest of the paper is organised as follows.
In Section~\ref{sec:preliminaries}, we establish the module action on
\(\HomLie(L)\) and the principal-symbol lemma.  In
Section~\ref{sec:classical}, we treat the classical algebras, including the
six-dimensional \(\mathfrak{sl}_2\) case.  In
Section~\ref{sec:zassenhaus}, we determine the Hom--Lie space of each
Zassenhaus algebra.  In Section~\ref{sec:cartan}, we treat the remaining
graded Cartan-type families.  In Section~\ref{sec:melikian}, we consider the
Melikian algebras.  Finally, in Section~\ref{sec:filtered-cartan}, we pass to
filtered Cartan type and complete the proof of Theorem~\ref{thm:main}. 
As a formal consequence of scalar extension, we also obtain the corresponding result for finite-dimensional absolutely simple Lie algebras over arbitrary ground fields; see Corollary~\ref{cor:arbi-fields}.

\section{General results on Hom--Lie structures}\label{sec:preliminaries}

We record two facts used throughout the paper.  The space of Hom--Lie
structures is an \(L\)-submodule of \(\End_{\K}(L)\), and the principal symbol
of a non-zero Hom--Lie structure on a filtered algebra is a non-zero Hom--Lie
structure on its associated graded algebra.  We also prove a vanishing lemma
used in several of the Cartan-type arguments.

For \(\varphi\in\End_{\K}(L)\), write
\[
    J_\varphi(x,y,z)
    =
    [[x,y],\varphi(z)]
    +
    [[z,x],\varphi(y)]
    +
    [[y,z],\varphi(x)].
\]
Thus \(\varphi\in\HomLie(L)\) precisely when \(J_\varphi=0\).

\subsection{The module structure on \texorpdfstring{\(\HomLie(L)\)}{HomLie(L)}}

We recall an observation of Makhlouf and Zusmanovich
\cite[Lemma~1]{MakhloufZusmanovich2018}.
For \(\varphi\in\End_{\K}(L)\) and \(h\in L\), define
\begin{equation}\label{eq:bullet-action}
    (\varphi\bullet h)(x)
    =
    [\varphi(x),h]-\varphi([x,h]),
    \qquad x\in L.
\end{equation}
This is the natural right \(L\)-module structure on
\(\Hom_{\K}(L,L)\) induced by the adjoint representation.

\begin{lemma}\label{lem:HomLie-module}
Let \(L\) be a Lie algebra over \(\K\).
Then \(\HomLie(L)\) is stable under the action~\eqref{eq:bullet-action}.
Equivalently, if \(\varphi\in\HomLie(L)\) and \(h\in L\), then
\(\varphi\bullet h\in\HomLie(L)\).
\end{lemma}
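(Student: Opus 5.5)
The plan is a direct computation: express \(J_{\varphi\bullet h}\) in terms of \(J_\varphi\) and the adjoint action of \(h\), using only the Jacobi identity in \(L\). Write \(D=\operatorname{ad}h\) with the convention \(D(x)=[x,h]\), so that \((\varphi\bullet h)=D\varphi-\varphi D\) as operators, where \(D\varphi\) means \(x\mapsto[\varphi(x),h]\). The Jacobi identity says exactly that \(D\) is a derivation of the bracket: \([[a,b],h]=[[a,h],b]+[a,[b,h]]\). I will show that
\[
  J_{\varphi\bullet h}(x,y,z)=D\bigl(J_\varphi(x,y,z)\bigr)-J_\varphi(Dx,y,z)-J_\varphi(x,Dy,z)-J_\varphi(x,y,Dz),
\]
which immediately gives \(J_{\varphi\bullet h}=0\) whenever \(J_\varphi=0\).

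To verify the identity, I expand \(D(J_\varphi(x,y,z))\) term by term. For the first summand, the derivation property gives \(D[[x,y],\varphi(z)]=[D[x,y],\varphi(z)]+[[x,y],D\varphi(z)]\). Expanding \(D[x,y]=[Dx,y]+[x,Dy]\) yields \([[Dx,y],\varphi(z)]+[[x,Dy],\varphi(z)]+[[x,y],D\varphi(z)]\). The two remaining cyclic summands are handled the same way.

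Next I collect terms. The terms containing \(D\varphi(\cdot)\) assemble into the \(D\varphi\)-part of \(J_{\varphi\bullet h}\). The terms of the form \([[Dx,y],\varphi(z)]\), together with \([[z,Dx],\varphi(y)]\) from the second cyclic summand, are exactly the terms of \(J_\varphi(Dx,y,z)\) that do not involve \(\varphi(Dx)\). The missing term \([[y,z],\varphi(Dx)]\) is supplied by the \(-\varphi D\) part of \(J_{\varphi\bullet h}\): subtracting \(J_\varphi(Dx,y,z)\) and adding back \([[y,z],\varphi(Dx)]\), whose sign is opposite, gives precisely \(-[[y,z],\varphi D(x)]\). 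Doing this symmetrically for \(y\) and \(z\) produces the displayed identity.

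I expect no real obstacle. The only step needing care is the sign and cyclic bookkeeping in this last matching, that is, checking that each of the six mixed terms appears exactly once in the appropriate \(J_\varphi(\dots D\cdot\dots)\). Because everything reduces to linearity together with the derivation property of \(\operatorname{ad}h\), the argument works over any field and for any Lie algebra, which matches the generality of the statement.
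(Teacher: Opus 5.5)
Your proposal is correct and is the paper's argument: your displayed identity is exactly \(J_{\varphi\bullet h}=J_\varphi\bullet h\) for the induced right action on trilinear maps, proved by expanding with the derivation property of \(x\mapsto[x,h]\). The six mixed terms do cancel as you describe, so this equivariance of \(\varphi\mapsto J_\varphi\) gives the stability of \(\HomLie(L)\).
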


\begin{proof}
Let \(F\colon L^{\otimes 3}\to L\) be a trilinear map.
The induced right action on such maps is
\begin{equation*}
    (F\bullet h)(x,y,z)
    =
    [F(x,y,z),h]
    -F([x,h],y,z)
    -F(x,[y,h],z)
    -F(x,y,[z,h]).
\end{equation*}
A direct expansion, using only the Jacobi identity in \(L\), gives
\(J_{\varphi\bullet h}=J_\varphi\bullet h\).  Hence the assignment
\(\varphi\mapsto J_\varphi\) from \(\End_{\K}(L)\) to
\(\Hom_{\K}(L^{\otimes3},L)\) is a homomorphism of right \(L\)-modules.  If
\(J_\varphi=0\), then \(J_{\varphi\bullet h}=0\), as required.
\end{proof}

The module structure will be used in the classical and Melikian arguments.  The
following consequence of the Hom--Jacobi identity will conclude several of
the Cartan-type arguments.

\begin{lemma}\label{lem:propagation}
Let \(L\) be a Lie algebra, and let \(V,A\subseteq L\) be subspaces such that
\([V,A]=V\) and
\(\Cent_L(V)\cap\Cent_L([A,A])=0\).
If \(\psi\in\HomLie(L)\) vanishes on \(V+A\), then \(\psi=0\).
\end{lemma}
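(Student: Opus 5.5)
The hypothesis that \(\psi\) vanishes on \(V+A\) kills two of the three terms in the Hom--Jacobi identity. This lets me read off a centraliser condition on every value of \(\psi\), and the hypotheses on \(V\) and \(A\) then force that value to be zero. No use of Lemma~\ref{lem:HomLie-module} or of any structure of \(L\) beyond the identity \(J_\psi=0\) is needed.

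Fix an arbitrary \(z\in L\) and take \(x,y\in V+A\). Since \(\psi\in\HomLie(L)\), we have \(J_\psi(x,y,z)=0\), that is,
\[
 [[x,y],\psi(z)]+[[z,x],\psi(y)]+[[y,z],\psi(x)]=0 .
\]
Because \(x,y\in V+A\), the hypothesis gives \(\psi(x)=\psi(y)=0\), so the last two terms vanish. Hence \([[x,y],\psi(z)]=0\) for all \(x,y\in V+A\). In other words, \(\psi(z)\in\Cent_L([V+A,V+A])\).

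Next I specialise the pair \((x,y)\). Taking \(x\in V\) and \(y\in A\) shows that \(\psi(z)\) commutes with every bracket \([v,a]\), \(v\in V\), \(a\in A\). By bilinearity it therefore commutes with their span \([V,A]\), which equals \(V\) by hypothesis; thus \(\psi(z)\in\Cent_L(V)\). Taking \(x,y\in A\) shows similarly that \(\psi(z)\in\Cent_L([A,A])\). Consequently \(\psi(z)\in\Cent_L(V)\cap\Cent_L([A,A])=0\).

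Since \(z\) was arbitrary, \(\psi=0\). There is no real obstacle here. The only point requiring care is that the centraliser condition obtained from individual brackets extends to the spans \([V,A]\) and \([A,A]\), which is immediate from bilinearity of the bracket.
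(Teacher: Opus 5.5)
Your proof is correct and takes essentially the same approach as the paper: you apply the Hom--Jacobi identity to \(v,a,z\) and to \(a_1,a_2,z\), use the vanishing of \(\psi\) on \(V+A\) to kill two terms, and conclude that \(\psi(z)\in\Cent_L(V)\cap\Cent_L([A,A])=0\). Your explicit remark that the centraliser condition passes to the spans \([V,A]\) and \([A,A]\) by bilinearity is a small detail the paper leaves implicit.
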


\begin{proof}
For \(v\in V\), \(a\in A\), and \(y\in L\), the Hom--Jacobi identity gives
\([[v,a],\psi(y)]=0\).  Since \([V,A]=V\), it follows that
\(\psi(y)\in\Cent_L(V)\).  Applying the same identity to
\(a_1,a_2,y\), where \(a_1,a_2\in A\), gives
\(\psi(y)\in\Cent_L([A,A])\).  The last hypothesis now yields \(\psi(y)=0\).
\end{proof}

\subsection{Filtered Lie algebras and principal symbols}

We now record the filtered argument used later for filtered Cartan-type algebras.
Let \(L\) be a finite-dimensional Lie algebra equipped with a finite descending filtration
\[
    L=L_{(q)}\supseteq L_{(q+1)}
      \supseteq \cdots \supseteq L_{(s)}
      \supseteq L_{(s+1)}=0
\]
satisfying
\begin{equation}\label{eq:filtered-bracket}
    [L_{(i)},L_{(j)}]\subseteq L_{(i+j)}
    \qquad\text{for all } i,j .
\end{equation}
We use the convention that \(L_{(i)}=L\) for \(i\leqslant q\) and
\(L_{(i)}=0\) for \(i>s\).
Hereafter, a \emph{filtered Lie algebra} is a Lie algebra equipped with such a
filtration.  We always impose the stated boundary conventions.
The \emph{associated graded Lie algebra} is
\[
    \gr L=\bigoplus_i \gr_i L,
    \qquad
    \gr_i L=L_{(i)}/L_{(i+1)}.
\]
For \(x\in L_{(i)}\) and \(y\in L_{(j)}\), its bracket is
\[
    [x+L_{(i+1)},y+L_{(j+1)}]
    =
    [x,y]+L_{(i+j+1)}.
\]
The filtration condition makes this definition independent of the chosen
representatives.

Let \(0\neq\varphi\in\End_{\K}(L)\).
Its \emph{filtration degree} is the largest integer \(r\) such that
\[
    \varphi(L_{(i)})\subseteq L_{(i+r)}
    \qquad\text{for every }i.
\]
Such an integer exists: the set of integers with this property is non-empty
because \(L_{(j)}=L\) for \(j\leqslant q\), and it is bounded above because
\(\varphi\neq0\) and \(L_{(j)}=0\) for \(j>s\).
The \emph{principal symbol} of \(\varphi\) is the homogeneous map
\(\gr_r\varphi\colon\gr L\to\gr L\) of degree \(r\), defined by
\begin{equation}\label{eq:principal-symbol}
    \gr_r\varphi(x+L_{(i+1)})
    =
    \varphi(x)+L_{(i+r+1)},
    \qquad x\in L_{(i)}.
\end{equation}
The definition is independent of the representative \(x\), because \(\varphi(L_{(i+1)})\subseteq L_{(i+r+1)}\).
By the maximality of \(r\), the map \(\gr_r\varphi\) is non-zero.

\begin{lemma}\label{lem:symbol-HomLie}
Let \(L\) be a filtered Lie algebra over \(\K\).
If \(0\neq\varphi\in\HomLie(L)\) has filtration degree \(r\), then \(\gr_r\varphi\in\HomLie(\gr L)\).
\end{lemma}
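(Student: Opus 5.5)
We verify the Hom--Jacobi identity for \(\gr_r\varphi\) on homogeneous elements of \(\gr L\); by trilinearity this suffices. Fix indices \(i,j,k\) and elements \(x\in L_{(i)}\), \(y\in L_{(j)}\), \(z\in L_{(k)}\). Write \(\bar x=x+L_{(i+1)}\), and similarly \(\bar y\) and \(\bar z\). Set \(m=i+j+k+r\). We show that \(J_{\gr_r\varphi}(\bar x,\bar y,\bar z)\) equals the class of \(J_\varphi(x,y,z)\) in \(\gr_m L=L_{(m)}/L_{(m+1)}\). Since \(J_\varphi=0\), this class is zero, and the lemma follows.

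Consider the first term. By the definition of the graded bracket, \([\bar x,\bar y]=[x,y]+L_{(i+j+1)}\), with \([x,y]\in L_{(i+j)}\). By \eqref{eq:principal-symbol}, \(\gr_r\varphi(\bar z)=\varphi(z)+L_{(k+r+1)}\), and \(\varphi(z)\in L_{(k+r)}\) by the definition of filtration degree. Applying the graded bracket once more gives
\[
[[\bar x,\bar y],\gr_r\varphi(\bar z)]=[[x,y],\varphi(z)]+L_{(m+1)}\in\gr_m L.
\]
The other two terms are handled the same way: \([[z,x],\varphi(y)]\) and \([[y,z],\varphi(x)]\) also lie in \(L_{(m)}\), and their images in \(\gr_m L\) are the corresponding graded terms. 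Summing the three terms shows that \(J_{\gr_r\varphi}(\bar x,\bar y,\bar z)\) is the image of \(J_\varphi(x,y,z)=0\), as claimed.

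The only point needing care is that every graded bracket used here is well defined. This holds because of the filtration condition \eqref{eq:filtered-bracket} and the inclusion \(\varphi(L_{(i)})\subseteq L_{(i+r)}\). The boundary conventions (\(L_{(i)}=L\) for \(i\leqslant q\) and \(L_{(i)}=0\) for \(i>s\)) ensure that the argument holds for all indices, including those for which \(\gr_m L=0\).

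Finally, \(\gr_r\varphi\) is non-zero by the maximality of \(r\), as already observed. So \(\gr_r\varphi\) is a non-zero element of \(\HomLie(\gr L)\), which is the form in which the lemma will be applied later. No step here is a genuine obstacle; the argument is a direct bookkeeping of filtration degrees.
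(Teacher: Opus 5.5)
Your proposal is correct and is essentially the paper's own proof. Both arguments place each of the three Hom--Jacobi terms for homogeneous representatives in \(L_{(i+j+k+r)}\), identify its class in \(\gr_{i+j+k+r}L\) with the corresponding graded term using the definitions of the graded bracket and of \(\gr_r\varphi\), and conclude from \(J_\varphi=0\); your remarks on trilinearity and on well-definedness at boundary indices only make explicit what the paper leaves implicit.
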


\begin{proof}
Take homogeneous elements
\[
    \bar x=x+L_{(i+1)},\qquad
    \bar y=y+L_{(j+1)},\qquad
    \bar z=z+L_{(k+1)}
\]
of \(\gr L\), with \(x\in L_{(i)}\), \(y\in L_{(j)}\), and \(z\in L_{(k)}\).
Since \(\varphi\) has filtration degree \(r\), all three terms in the
Hom--Jacobi identity belong to \(L_{(i+j+k+r)}\).  Their sum is zero.
Passing to the homogeneous component of degree \(i+j+k+r\), and using
\eqref{eq:filtered-bracket} and \eqref{eq:principal-symbol}, we obtain
\[
\begin{split}
    [[\bar x,\bar y],(\gr_r\varphi)(\bar z)]
    +
    [[\bar z,\bar x],(\gr_r\varphi)(\bar y)]
    +
    [[\bar y,\bar z],(\gr_r\varphi)(\bar x)]
    =0 .
\end{split}
\]
Therefore \(\gr_r\varphi\) satisfies the Hom--Jacobi identity on \(\gr L\).
\end{proof}

\begin{lemma}\label{lem:filtered}
Let \(L\) be a filtered Lie algebra over \(\K\).
If \(\HomLie(\gr L)=\K\id_{\gr L}\), then
\[
    \HomLie(L)=\K\id_L.
\]
\end{lemma}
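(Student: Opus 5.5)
The plan is to argue by induction on the filtration degree. The key input is Lemma~\ref{lem:symbol-HomLie}, together with the fact that \(\id_L\) has filtration degree \(0\) and principal symbol \(\gr_0\id_L=\id_{\gr L}\).

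Let \(\varphi\in\HomLie(L)\) be given and suppose \(\varphi\notin\K\id_L\). Among all elements of the coset \(\varphi+\K\id_L\), which are all non-zero, choose \(\psi=\varphi-\lambda\id_L\) of maximal filtration degree \(r\). This maximum exists: the filtration degree of any non-zero map is bounded above, since \(L_{(j)}=0\) for \(j>s\) and \(L_{(j)}=L\) for \(j\le q\), so every non-zero map has degree at most \(s-q\). Because \(\HomLie(L)\) is a linear subspace containing \(\id_L\), we have \(\psi\in\HomLie(L)\). By Lemma~\ref{lem:symbol-HomLie}, the symbol \(\gr_r\psi\) lies in \(\HomLie(\gr L)=\K\id_{\gr L}\). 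Since \(\gr_r\psi\) is non-zero and homogeneous of degree \(r\), it follows that \(r=0\) and \(\gr_0\psi=\mu\,\id_{\gr L}\) with \(\mu\ne0\).

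There is a degenerate case to address here. If \(\gr L\) were concentrated in one degree, a non-zero homogeneous map of degree \(r\ne0\) would automatically vanish. This causes no difficulty, because a non-zero scalar map \(\mu\,\id\) is a genuine degree-\(0\) map: the identity \(\gr_r\psi=\mu\,\id\) with \(\mu\ne0\) forces \(r=0\), provided \(\gr L\ne0\). That holds since \(L\ne0\); if \(L=0\), the statement is trivial.

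Now consider \(\psi-\mu\id_L=\varphi-(\lambda+\mu)\id_L\). It is non-zero by assumption, and by construction \((\psi-\mu\id_L)(L_{(i)})\subseteq L_{(i+1)}\) for every \(i\). Its filtration degree is therefore at least \(1>r\), which contradicts the maximality of \(r\). Hence \(\varphi\in\K\id_L\). The reverse inclusion \(\K\id_L\subseteq\HomLie(L)\) is immediate.

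The only step that needs care is checking that the principal symbol of \(\psi-\mu\id_L\) really has degree above \(0\). This amounts to verifying that \(\gr_0\psi=\mu\,\id\) means exactly \(\psi(x)-\mu x\in L_{(i+1)}\) for \(x\in L_{(i)}\), which is the definition~\eqref{eq:principal-symbol}. Everything else is bookkeeping with the boundary conventions.
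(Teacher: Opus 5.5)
Your proof is correct and is essentially the paper's argument. Both rest on Lemma~\ref{lem:symbol-HomLie} and on the fact that a non-zero scalar map on \(\gr L\neq0\) cannot be homogeneous of non-zero degree. The paper applies the symbol lemma twice: first to \(\varphi\), forcing \(r=0\), and then to \(\varphi-\lambda\id_L\), whose symbol would be non-zero of positive degree. You instead choose an element of \(\varphi+\K\id_L\) of maximal filtration degree and reach the contradiction with a single application.
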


\begin{proof}
Let \(\varphi\in\HomLie(L)\).
If \(\varphi=0\), there is nothing to prove.
Assume that \(\varphi\neq0\), and let \(r\) be its filtration degree.
By Lemma~\ref{lem:symbol-HomLie}, the principal symbol
\(\gr_r\varphi\) lies in \(\HomLie(\gr L)\).
Hence \(\gr_r\varphi=\lambda\id_{\gr L}\) for some \(\lambda\in\K\).

The map \(\gr_r\varphi\) is non-zero and homogeneous of degree \(r\), whereas
\(\id_{\gr L}\) is homogeneous of degree \(0\).
Therefore \(r=0\).
The equality \(\gr_0\varphi=\lambda\id_{\gr L}\) means precisely that
\[
    (\varphi-\lambda\id_L)(L_{(i)})\subseteq L_{(i+1)}
    \qquad\text{for all }i.
\]
Let \(\psi=\varphi-\lambda\id_L\).
Since scalar multiples of the identity are Hom--Lie structures, we have \(\psi\in\HomLie(L)\).
If \(\psi\neq0\), then its filtration degree is at least \(1\).
Applying Lemma~\ref{lem:symbol-HomLie} to \(\psi\), we obtain a non-zero
principal symbol in \(\HomLie(\gr L)\) which is homogeneous of positive
degree.
This is impossible, because \(\HomLie(\gr L)=\K\id_{\gr L}\) contains no non-zero homogeneous map of positive degree.
Hence \(\psi=0\), and \(\varphi=\lambda\id_L\).
\end{proof}

Thus the scalar conclusion passes from the associated graded algebra to the
filtered algebra.

\section{Classical type and the three-dimensional case}\label{sec:classical}

We first determine the Hom--Lie structures on the three-dimensional simple
Lie algebra.  We then give a root-space proof for all remaining classical
simple Lie algebras.

\subsection{The algebra \texorpdfstring{\(\mathfrak{sl}_2\)}{sl2}}

We use the following normalisation of the standard ordered basis
\(\mathbf e=(e_-,h,e_+)\) of \(\mathfrak{sl}_2\):
\begin{equation}\label{eq:sl2-brackets}
    [e_-,h]=-e_-,
    \qquad
    [e_+,h]=e_+,
    \qquad
    [e_-,e_+]=h .
\end{equation}
With this convention, the non-scalar Hom--Lie structures can be written
explicitly as follows.  This calculation appears in
\cite[Proposition~1]{MakhloufZusmanovich2018}; we reproduce it in our
normalisation and without invoking any representation-theoretic
semisimplicity.

\begin{proposition}\label{prop:sl2}
Let \(\K\) be an algebraically closed field with
\(\operatorname{char}\K\neq2,3\), and let \(L\) have ordered basis
\(\mathbf e=(e_-,h,e_+)\) and brackets
\([e_-,h]=-e_-\), \([e_+,h]=e_+\), and \([e_-,e_+]=h\).  Then
\[
 \HomLie(L)=
 \left\{
 \left.
 \begin{pmatrix}
  a&b&c\\ d&\eta&b\\ e&d&a
 \end{pmatrix}
 \,\right|\,
 a,b,c,d,e,\eta\in\K
 \right\},
\]
where the matrices are taken with respect to \(\mathbf e\).  In particular,
\(\dim_{\K}\HomLie(L)=6\).
\end{proposition}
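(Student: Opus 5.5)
Since \(\dim L=3\) and \(J_\varphi\) is alternating in \((x,y,z)\) — a transposition of two arguments permutes the three terms and changes the sign of each inner bracket — the Hom--Jacobi condition reduces to the single vector identity
\[
J_\varphi(e_-,h,e_+)=[[e_-,h],\varphi(e_+)]+[[e_+,e_-],\varphi(h)]+[[h,e_+],\varphi(e_-)]=0 .
\]
So \(\HomLie(L)\) is the kernel of a linear map \(\End_{\K}(L)\to L\), i.e.\ of three scalar equations on the nine matrix entries. Using \eqref{eq:sl2-brackets}, this identity becomes
\[
-[e_-,\varphi(e_+)]-[h,\varphi(h)]-[e_+,\varphi(e_-)]=0 .
\]

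Next I write \(\varphi(v)=\varphi_{-}(v)e_-+\varphi_0(v)h+\varphi_+(v)e_+\), which corresponds to the columns of the matrix with respect to \(\mathbf e\). The brackets \([e_-,\cdot]\), \([h,\cdot]\), \([e_+,\cdot]\) applied to a general vector are read off from \eqref{eq:sl2-brackets}. Collecting the \(e_-\), \(h\) and \(e_+\) components gives three linear equations:
\begin{enumerate}
\item[\textup{(1)}] the \(h\)-component says \(\varphi_+(e_+)=\varphi_-(e_-)\);
\item[\textup{(2)}] the \(e_-\)-component says \(\varphi_0(e_+)=\varphi_-(h)\);
\item[\textup{(3)}] the \(e_+\)-component says \(\varphi_0(e_-)=\varphi_+(h)\).
\end{enumerate}
Each of these may carry a sign, to be fixed by the computation. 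These three conditions are exactly the coincidences of entries \(a\), \(b\), \(d\) in the displayed matrix, read with respect to the basis \(\mathbf e\) (one must be careful whether the matrix acts on columns or rows). The remaining entries \(c\), \(e\) and \(\eta\) are free.

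The three equations are visibly independent, since each involves a disjoint pair of entries. The kernel therefore has dimension \(9-3=6\), which proves the dimension count. A quick sanity check is that \(\id_L\), given by \(a=\eta=1\) and \(b=c=d=e=0\), lies in the space.

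The only real obstacle is bookkeeping. The signs in the three component equations must come out as plain equalities, matching the symmetric pattern of the displayed matrix, rather than negatives. Confirming this requires tracking the normalisation \([e_-,h]=-e_-\) and the matrix convention carefully. The hypothesis \(\operatorname{char}\K\neq2,3\) is not needed beyond the consistency of the setup, since no division occurs.
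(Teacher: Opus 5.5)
Your proposal follows the paper's proof. Alternation reduces everything to \(J_\varphi(e_-,h,e_+)=0\), and comparing the \(e_-\), \(h\), \(e_+\) coefficients gives exactly your equations (1)--(3), with the component labels you state.

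The one thing you leave open is the sign check, and that check is the whole content of the statement, so you need to carry it out. It gives
\[
 [e_-,\varphi(e_+)]+[h,\varphi(h)]+[e_+,\varphi(e_-)]
 =\bigl(\varphi_-(h)-\varphi_0(e_+)\bigr)e_-
 +\bigl(\varphi_+(e_+)-\varphi_-(e_-)\bigr)h
 +\bigl(\varphi_0(e_-)-\varphi_+(h)\bigr)e_+ .
\]
So all three conditions are plain equalities. Up to an overall sign, this is the paper's \(J_\varphi(e_-,h,e_+)=(g-b)e_-+(a-k)h+(f-d)e_+\).

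Your worry about rows versus columns is also unnecessary. The displayed set is exactly the space of \(3\times3\) matrices symmetric about the anti-diagonal, and that space is closed under transposition.
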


\begin{proof}
Write
\[
\begin{aligned}
 \varphi(e_-)&=a e_-+d h+e e_+,&
 \varphi(h)&=b e_-+\eta h+f e_+,&
 \varphi(e_+)&=c e_-+g h+k e_+.
\end{aligned}
\]
The map \(J_\varphi\) is alternating.  Since \(\dim L=3\), it is enough to
impose \(J_\varphi(e_-,h,e_+)=0\).  Using
\eqref{eq:sl2-brackets}, we obtain
\[
 J_\varphi(e_-,h,e_+)=(g-b)e_-+(a-k)h+(f-d)e_+.
\]
Thus \(g=b\), \(k=a\), and \(f=d\), while the remaining six coefficients are
arbitrary.
\end{proof}

\subsection{Classical simple Lie algebras other than
\texorpdfstring{\(\mathfrak{sl}_2\)}{sl2}}

We use the Chevalley root-space description; in positive characteristic our
conventions follow \cite[Chapter~4, Section~4.1]{StradeI}.  Let \(R\) be an
irreducible reduced root system with simple roots
\(\alpha_1,\ldots,\alpha_\ell\).  Choose a Chevalley basis
\[
    \mathcal B_R
    =
    \{h_1,\ldots,h_\ell\}\cup\{e_\alpha\mid\alpha\in R\},
\]
normalised by
\[
\begin{aligned}
    \relax [h_i,h_j]&=0,
    & [h_i,e_\alpha]&=\langle\alpha,\alpha_i^\vee\rangle e_\alpha,\\
    [e_\alpha,e_{-\alpha}]&=h_\alpha,
    & [e_\alpha,e_\beta]&=N_{\alpha,\beta}e_{\alpha+\beta}
       &&(\alpha+\beta\in R),
\end{aligned}
\]
and \([e_\alpha,e_\beta]=0\) if \(\beta\neq-\alpha\) and
\(\alpha+\beta\notin R\).  The integral Chevalley constants satisfy
\(|N_{\alpha,\beta}|=r_{\alpha,\beta}+1\), where
\(r_{\alpha,\beta}\) is the largest non-negative integer such that
\(\beta-r_{\alpha,\beta}\alpha\in R\).

Let \(\mathfrak g_{\mathbb Z}\) be the \(\mathbb Z\)-span of this basis and
\(\mathfrak g_{\K}=\mathfrak g_{\mathbb Z}\otimes_{\mathbb Z}\K\).  By a
simple Lie algebra of classical type we mean the simple algebra obtained from
\(\mathfrak g_{\K}^{(1)}\) by factoring out its centre.  If
\(\operatorname{char}\K\neq2,3\), a central quotient occurs only in type
\(A_\ell\) when \(p\mid(\ell+1)\).  Its root spaces remain one-dimensional,
and the required Chevalley relations descend to the quotient.  The possible
types are \(A_\ell,B_\ell,C_\ell,D_\ell,G_2,F_4,E_6,E_7\), and \(E_8\).

\begin{lemma}\label{lem:classical-root-facts}
Let \(L\) be a finite-dimensional simple Lie algebra of classical type over an
algebraically closed field \(\K\) of characteristic \(0\) or \(p>3\), and suppose
that \(L\not\simeq\mathfrak{sl}_2\).  Choose a Chevalley root decomposition
\[
   L=H\oplus\bigoplus_{\alpha\in R}\K e_\alpha,
   \qquad [h,e_\alpha]=\alpha(h)e_\alpha ;
\]
here \(R\) denotes the underlying irreducible reduced root system and, in a
simple central quotient, also its induced set of root functions on \(H\).
Then the following hold.
\begin{enumerate}
\item If \(\alpha,\beta\in R\) and \(\alpha\neq\pm\beta\), then \(\alpha\) and
      \(\beta\), regarded as elements of \(H^*\), are linearly independent.
\item For every \(\beta\in R\), there exists \(\delta\in R\) such that
      \(\delta\neq\pm\beta\) and \(\delta-\beta\in R\).
\item If \(\alpha,\beta,\alpha+\beta\in R\), then
      \([e_\alpha,e_\beta]\neq0\).
\end{enumerate}
\end{lemma}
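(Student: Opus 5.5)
I would prove the three items separately, each time reducing to properties of the underlying integral root system \(R\). The characteristic hypothesis (\(0\) or \(p>3\)) is used only to control integers that could vanish in \(\K\). For the central quotient in type \(A_\ell\) with \(p\mid\ell+1\), I would work in \(\mathfrak g_\K^{(1)}\), which contains all \(e_\alpha\) and the span of the \(h_\alpha\). The root functions factor through the image of \(H\). Since root spaces stay one-dimensional and the relations descend, each item can be checked upstairs and then transported.

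For (1), suppose \(\alpha\neq\pm\beta\) and that \(\alpha,\beta\) are dependent in \(H^*\). Then they would be dependent on the two elements \(h_\alpha=\alpha^\vee\) and \(h_\beta=\beta^\vee\), which lie in \(H\) (or map to it). This forces the \(2\times2\) Cartan-type matrix
\[
\begin{pmatrix}\langle\alpha,\alpha^\vee\rangle&\langle\alpha,\beta^\vee\rangle\\ \langle\beta,\alpha^\vee\rangle&\langle\beta,\beta^\vee\rangle\end{pmatrix}
\]
to have determinant \(0\) in \(\K\). Over \(\mathbb Z\) this determinant equals \(4-\langle\alpha,\beta^\vee\rangle\langle\beta,\alpha^\vee\rangle\in\{1,2,3,4\}\), because \(\alpha\neq\pm\beta\) in a reduced system. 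Since \(p>3\), it is nonzero in \(\K\), a contradiction.

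A small subtlety remains: dependence of \(\alpha,\beta\) on all of \(H\) implies dependence on these two vectors. This is the direction we need. If \(c_1\alpha+c_2\beta=0\) on \(H\) with \((c_1,c_2)\neq 0\), then evaluating at \(h_\alpha\) and \(h_\beta\) gives a nontrivial kernel vector for the transposed matrix. So the argument is fine.

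For (2), I use only the root system. Since \(L\not\simeq\mathfrak{sl}_2\), \(R\) is irreducible of rank \(\ge2\). Given \(\beta\), I need a root \(\gamma=\delta-\beta\) with \(\gamma+\beta\in R\), \(\gamma\neq\pm\beta\), and \(\delta=\gamma+\beta\neq\pm\beta\). The condition \(\delta\neq\pm\beta\) is automatic once \(\gamma\neq0,-2\beta\). So it suffices to find a root \(\gamma\neq\pm\beta\) with \(\beta+\gamma\in R\). By irreducibility and rank \(\ge2\), there is a root \(\gamma'\) not proportional to \(\beta\) with \((\gamma',\beta)\neq0\). Otherwise \(\beta\) would be orthogonal to every root not proportional to it, making \(\{\pm\beta\}\) an orthogonal component. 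Replacing \(\gamma'\) by \(-\gamma'\) if needed, we get \((\gamma',\beta)<0\), and then \(\beta+\gamma'\in R\) by the standard string property.

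For (3), the relation \([e_\alpha,e_\beta]=N_{\alpha,\beta}e_{\alpha+\beta}\) holds with \(|N_{\alpha,\beta}|=r_{\alpha,\beta}+1\). Root strings have length at most \(4\), so \(r_{\alpha,\beta}\le 2\) and \(|N_{\alpha,\beta}|\in\{1,2,3\}\). This is nonzero in characteristic \(0\) or \(p>3\). Since \(e_{\alpha+\beta}\) remains nonzero in the quotient (root spaces stay one-dimensional), the bracket is nonzero.

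The main obstacle is (1) in the central-quotient case. I would handle it by the rank-two determinant argument above, noting explicitly that it only uses the two coroots.
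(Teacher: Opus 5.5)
Your proposal is correct and follows the paper's proof: item (1) is the same Cartan-integer argument (your determinant \(4-\langle\alpha,\beta^\vee\rangle\langle\beta,\alpha^\vee\rangle\in\{1,2,3,4\}\) is the paper's contradiction \(\alpha(h_\beta)\beta(h_\alpha)=4\) in matrix form), and item (3) is identical. For item (2) you get a non-proportional root with negative inner product directly from irreducibility, whereas the paper chooses a simple system containing \(\beta\) and takes an adjacent simple root; both rest on the same fact that \((\beta,\gamma)<0\) for non-proportional roots forces \(\beta+\gamma\in R\).
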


\begin{proof}
The last assertion follows from the Chevalley commutator formula: the relevant
structure constants have absolute value at most \(3\), and hence remain non-zero
in characteristic \(p>3\).

For the first assertion, suppose that \(\alpha=c\beta\) in \(H^*\) for some
\(c\in\K^\times\).  Let
\(h_\gamma=[e_\gamma,e_{-\gamma}]\) for \(\gamma\in R\).  Since \(\alpha(h_\alpha)=\beta(h_\beta)=2\), one has
\[
   \alpha(h_\beta)=2c,
   \qquad
   \beta(h_\alpha)=2c^{-1},
\]
and hence
\[
   \alpha(h_\beta)\,\beta(h_\alpha)=4.
\]
Because \(\alpha\neq\pm\beta\), the two roots are non-proportional in the
underlying reduced root system, and the product of the corresponding Cartan
integers is one of \(0,1,2,3\).  These values are distinct from \(4\) in
characteristic \(0\) and in characteristic \(p>3\), a contradiction.

For the second assertion, choose a simple system containing \(\beta\).
This is possible for every root.  Since the reduced root system is irreducible
and has rank greater than one, \(\beta\) has an adjacent simple root \(\eta\).
Then \(\beta+\eta\) is a root, so \(\delta=\beta+\eta\) has all the required
properties.
\end{proof}

\begin{theorem}\label{thm:classical}
Let \(L\) be a finite-dimensional simple Lie algebra of classical type over
\(\K\), where \(\K\) is algebraically closed and
\(\operatorname{char}\K\neq2,3\).  If
\(L\not\simeq\mathfrak{sl}_2\), then
\[
   \HomLie(L)=\K\id_L.
\]
\end{theorem}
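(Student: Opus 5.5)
The plan is to use the module structure of Lemma~\ref{lem:HomLie-module} to reduce to $H$-weight vectors, and then to read off the shape of $\varphi$ from Hom--Jacobi identities with two commuting arguments. Let $H$ be the Cartan subalgebra of the Chevalley root decomposition in Lemma~\ref{lem:classical-root-facts}.

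\textbf{Reduction to weight vectors.} The operators $\operatorname{ad}h$, $h\in H$, act diagonally on the basis $H\cup\{e_\alpha\}$. Hence the action $\varphi\mapsto\varphi\bullet h$ on $\End_{\K}(L)$ is simultaneously diagonalisable. By Lemma~\ref{lem:HomLie-module}, $\HomLie(L)$ is stable under this action, so it is the direct sum of its $H$-weight spaces. A weight vector $\varphi$ of weight $\lambda\in H^*$ maps each weight space $L_\mu$ (with $L_0=H$) into $L_{\mu+\lambda}$. It therefore suffices to show two things:
\begin{itemize}
\item every weight vector with $\lambda\neq0$ is zero;
\item every weight vector with $\lambda=0$ is scalar.
\end{itemize}

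\textbf{Main identity.} The basic tool is the identity
\[
[[z,x],\varphi(y)]+[[y,z],\varphi(x)]=0\qquad\text{whenever }[x,y]=0,
\]
which is $J_\varphi(x,y,z)=0$. I will apply it in two ways.

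\emph{First choice: $x=h$, $y=h'$ in $H$, and $z=e_\beta$.} This gives
\[
\beta(h)[e_\beta,\varphi(h')]=\beta(h')[e_\beta,\varphi(h)].
\]
Taking $h'\in\ker\beta$ and $h$ with $\beta(h)\neq0$ shows that $\varphi(\ker\beta)$ commutes with $e_\beta$. Since $L\not\simeq\mathfrak{sl}_2$, we have $\dim H\geqslant2$. Using part~(1) of Lemma~\ref{lem:classical-root-facts} (non-proportionality), the subspaces $\ker\beta$ are distinct hyperplanes covering enough of $H$. This should force $\varphi(H)$ to lie in the centraliser of many root vectors, and hence, in the weight-$\lambda$ setting, $\varphi(H)\subseteq H$.

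\emph{Second choice: $x=h\in\ker\beta$, $y=e_\beta$, and $z=e_\gamma$ or $z\in H$.} This shows that $\varphi(e_\beta)$ is controlled by $\varphi$ on $H$ and on neighbouring root vectors. The aim is to prove $\varphi(e_\beta)=c_\beta e_\beta$ and $\varphi(h)=\mu(h)$-type expressions. For a nonzero weight $\lambda$ this should force $\varphi=0$, because $\varphi(e_\beta)\in L_{\beta+\lambda}$ must be a multiple of $e_\beta$.

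\textbf{Case $\lambda=0$.} Here $\varphi(e_\beta)=c_\beta e_\beta$ and $\varphi(H)\subseteq H$. Apply $J_\varphi(e_\alpha,e_\beta,z)$ with $\alpha+\beta\in R$ and with $z$ chosen among $e_{-\alpha-\beta}$ and elements of $H$. Part~(3) of Lemma~\ref{lem:classical-root-facts} guarantees $[e_\alpha,e_\beta]\neq0$, so this yields relations $c_\alpha=c_\beta$ and ties $\varphi|_H$ to the same constant. By irreducibility of the root system, together with part~(2) (each root has a non-opposite neighbour differing by a root), all the $c_\beta$ are equal to a single $c$, and $\varphi|_H=c\,\id_H$. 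Hence $\varphi=c\,\id_L$.

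\textbf{Expected obstacle.} The main difficulty will be the nonzero weights $\lambda$ that are themselves roots, or differences of roots. For these, $\varphi$ can a priori map $H$ to $L_\lambda$ and map $e_{-\lambda}$ into $H$. I would first exclude these pieces using the first choice of identity with $h'\in\ker\lambda$, supplemented by $J_\varphi(e_\delta,e_{-\beta},\cdot)$ for the auxiliary root $\delta$ provided by part~(2). In characteristic $p$, weights may coincide modulo $p$. Part~(1) of Lemma~\ref{lem:classical-root-facts}, and the bound $|N_{\alpha,\beta}|\leqslant3<p$ behind part~(3), are exactly what prevent such coincidences from producing spurious solutions.
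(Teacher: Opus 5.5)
Your overall architecture is the paper's: decompose $\HomLie(L)$ into $H$-weight spaces using Lemma~\ref{lem:HomLie-module}, show that non-zero weights contribute nothing, and show that weight zero is scalar. But the non-zero-weight step, which carries essentially all the content of the proof, is only announced (``should force'', ``the aim is''). The case you single out as the expected obstacle is left open, and the identity $J_\varphi(e_\delta,e_{-\beta},\cdot)$ you suggest for it is never shown to give anything. Two arguments are missing.

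\emph{First}, take a weight $\lambda\in R$; for other non-zero $\lambda$, $\varphi(H)\subseteq L_\lambda=0$ trivially. Write $\varphi(h)=f(h)e_\lambda$. Your first identity says $f(\ker\beta)\,[e_\beta,e_\lambda]=0$, so it constrains $f$ only for roots $\beta$ with $[e_\beta,e_\lambda]\neq0$; in particular $\beta=\lambda$ gives nothing, and ``hyperplanes covering enough of $H$'' is not yet an argument. You need two non-proportional roots of this kind: $\beta=-\lambda$ and $\beta=\delta-\lambda$, with $\delta$ from part~(2) of Lemma~\ref{lem:classical-root-facts}. Part~(3) gives $[e_{\delta-\lambda},e_\lambda]\neq0$, and part~(1) makes the two kernels distinct. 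Then $f$ vanishes on two distinct hyperplanes, so $\varphi(H)=0$; this is exactly the paper's argument.

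\emph{Second}, you never derive $\varphi(e_\beta)\in\K e_\beta$, and your description of the second identity is off. For $h\in\ker\beta$ the term containing $\varphi(e_\gamma)$ drops out, so no neighbouring root vectors enter, and what remains is
\[
   \gamma(h)\,[e_\gamma,\varphi(e_\beta)]=[[e_\beta,e_\gamma],\varphi(h)] .
\]
With $z\in H$ it merely repeats your first identity.

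This displayed identity is what closes the gap. Once $\varphi(H)=0$, part~(1) gives, for each $\gamma\neq\pm\beta$, some $h\in\ker\beta$ with $\gamma(h)\neq0$. Hence $\varphi(e_\beta)$ centralises every $e_\gamma$ with $\gamma\neq\pm\beta$. These root vectors generate $L$. Indeed, with $\delta$ as in part~(2) for $\beta$, none of $\pm\delta,\pm(\beta-\delta)$ equals $\pm\beta$, and by part~(3) $e_{\pm\beta}$ is a non-zero multiple of $[e_{\pm\delta},e_{\pm(\beta-\delta)}]$; the $h_\alpha=[e_\alpha,e_{-\alpha}]$ then span $H$. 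So $\varphi(e_\beta)$ is central, hence zero. This disposes uniformly of the possibility $\varphi(e_{-\lambda})\in H$ that worried you. It also replaces the paper's three sub-cases ($\beta+\sigma$ a root other than $-\beta$, equal to $-\beta$, or equal to $0$, the last requiring the check that $-2\beta$ is not a root). Completed this way, your route is slightly shorter than the paper's.

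In weight zero there is a smaller omission of the same kind. One instance of $J_\varphi(e_\alpha,e_\beta,h)$ gives only $(\alpha+\beta)\circ\varphi|_H=c_\beta\alpha+c_\alpha\beta$. To extract $c_\alpha=c_\beta$ and $\varphi|_H=c\,\id_H$, you must feed in what your first identity gives there, namely $\beta\circ\varphi|_H=\lambda_\beta\beta$ for every root $\beta$. Then use the linear independence of $\alpha$ and $\beta$, and the fact that no non-zero element of $H$ is killed by all roots, as the paper does.
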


\begin{proof}
The standard left action of \(L\) on \(\End_{\K}(L)\) is
\[
   (h\cdot\varphi)(x)=[h,\varphi(x)]-\varphi([h,x]).
\]
It is the negative of the right-adjoint action in
Lemma~\ref{lem:HomLie-module}; hence \(\HomLie(L)\) is stable under this action as
well.  Since \(H\) acts diagonally on \(L\), it acts diagonally on
\(\End_{\K}(L)\).  It is therefore enough to determine a homogeneous
\(\varphi\in\HomLie(L)\) of \(H\)-weight \(\sigma\).  Thus
\[
   \varphi(L_\mu)\subseteq L_{\mu+\sigma},
   \qquad L_0=H,\qquad L_\alpha=\K e_\alpha\quad(\alpha\in R),
\]
where \(L_\nu=0\) if \(\nu\) is neither \(0\) nor a root.
Here \(\sigma\) is a weight of \(\End_{\K}(L)\), hence a difference of two
weights of \(L\); at this stage it is not assumed to be a root.

We first show that every component of non-zero weight vanishes.  Assume that
\(\sigma\neq0\).

Suppose first that \(\varphi(H)\neq0\).  Then \(\sigma\in R\), and there is a
non-zero \(f\in H^*\) such that \(\varphi(h)=f(h)e_\sigma\) for \(h\in H\).
If \(\alpha\in R\) and \([e_\alpha,e_\sigma]\neq0\), the Hom--Jacobi identity for
\(h,k,e_\alpha\) gives
\[
   \bigl(\alpha(k)f(h)-\alpha(h)f(k)\bigr)
   [e_\alpha,e_\sigma]=0.
\]
Taking \(\alpha=-\sigma\) shows that \(f\) is proportional to \(\sigma\).
Apply Lemma~\ref{lem:classical-root-facts}(2) to \(\sigma\), and write
\(\alpha=\delta-\sigma\).  Then \(\alpha\) is not proportional to \(\sigma\) and
\(\alpha+\sigma=\delta\in R\).  The same identity
then says that \(f\) is proportional to \(\alpha\), a contradiction.  Hence
\(\varphi(H)=0\).

Assume now that \(\varphi(e_\beta)\neq0\) for some \(\beta\in R\), and write
\(\gamma=\beta+\sigma\).

If \(\gamma\in R\) and \(\gamma\neq-\beta\), let \(\alpha=-\gamma\).
The equality \(\alpha=\beta\) is excluded by \(\gamma\neq-\beta\), while
\(\alpha=-\beta\) would imply \(\sigma=0\).  Thus \(\alpha\neq\pm\beta\), and
Lemma~\ref{lem:classical-root-facts}(1) allows us to choose \(h\in H\) with
\(\beta(h)=0\) and \(\alpha(h)\neq0\).  Since
\(\varphi(H)=0\), the Hom--Jacobi identity for \(h,e_\alpha,e_\beta\) reduces to
\(\alpha(h)[e_\alpha,\varphi(e_\beta)]=0\).
But \(\varphi(e_\beta)\) is a non-zero multiple of \(e_\gamma\), and
\([e_{-\gamma},e_\gamma]\neq0\), a contradiction.

If \(\gamma=-\beta\), choose, by
Lemma~\ref{lem:classical-root-facts}, a root \(\delta\) such that
\(\delta\neq\pm\beta\) and \(\delta-\beta\in R\).  Take \(h\in H\) with
\(\beta(h)=0\) and \(\delta(h)\neq0\).  The Hom--Jacobi identity for
\(h,e_\delta,e_\beta\) gives
\(\delta(h)[e_\delta,\varphi(e_\beta)]=0\),
whereas \(\varphi(e_\beta)\) is a non-zero multiple of \(e_{-\beta}\) and
\([e_\delta,e_{-\beta}]\neq0\).  This is again a contradiction.

It remains to consider \(\gamma=0\).  Then \(\sigma=-\beta\) and
\(\varphi(e_\beta)=t\in H\).  The weight \(-2\beta\) is non-zero.  If it were a
root, Lemma~\ref{lem:classical-root-facts}(1) would force it to be either
\(\beta\) or \(-\beta\).  The first equality gives \(3\beta=0\), and the
second gives \(\beta=0\), both impossible because
\(\operatorname{char}\K\neq2,3\).  Hence
\(\varphi(e_{-\beta})=0\).  The
Hom--Jacobi identity for \(h,e_{-\beta},e_\beta\) yields
\(\beta(t)=0\).  If \(\alpha\neq\pm\beta\), choose \(h\in H\) with
\(\beta(h)=0\) and \(\alpha(h)\neq0\).  The identity for
\(h,e_\alpha,e_\beta\) gives \(\alpha(t)=0\).  Thus every root vanishes on \(t\).
Consequently \(t\) is central in \(L\), and so \(t=0\), another contradiction.

We have proved that every non-zero weight component of \(\HomLie(L)\) is zero.
It remains to consider a weight-zero map \(\varphi\).  Write
\[
   \varphi(H)\subseteq H,
   \qquad
   \varphi(e_\alpha)=c_\alpha e_\alpha
   \quad(\alpha\in R).
\]
For \(h,k\in H\), the Hom--Jacobi identity for \(h,k,e_\alpha\) is
\[
   \alpha(h)\alpha(\varphi(k))
   =
   \alpha(k)\alpha(\varphi(h)).
\]
It follows that \(\alpha\circ\varphi|_H=\lambda_\alpha\alpha\) for some
\(\lambda_\alpha\in\K\).  If \(\alpha\) and \(\beta\) are adjacent
simple roots, then \(\alpha+\beta\) is a root.  Comparing the preceding relation
for \(\alpha\), \(\beta\), and \(\alpha+\beta\), and using the linear
independence of \(\alpha\) and \(\beta\), gives
\(\lambda_\alpha=\lambda_\beta=\lambda_{\alpha+\beta}\).
The Dynkin diagram is connected, so all simple roots have the same eigenvalue,
say \(\lambda\).  The simple roots span \(H^*\): an element of \(H\) annihilated by
all roots would be central in \(L\).  Hence
\(\varphi|_H=\lambda\id_H\).

The Hom--Jacobi identity for \(h,e_\alpha,e_{-\alpha}\) gives
\(c_{-\alpha}=c_\alpha\).
If \(\alpha,\beta,\alpha+\beta\in R\), the identity for
\(h,e_\alpha,e_\beta\) gives, as an equality in \(H^*\),
\[
   (c_\beta-\lambda)\alpha
   +(c_\alpha-\lambda)\beta=0.
\]
Since \(\alpha\) and \(\beta\) are linearly independent, this implies
\(c_\alpha=c_\beta=\lambda\).  Every simple root has an adjacent simple root,
so this holds for all simple roots.  If \(\gamma\) is a positive non-simple root,
choose a simple root \(\alpha\) such that \(\gamma-\alpha\) is a root.  Applying the
same identity to \(\gamma\) and \(-\alpha\) gives \(c_\gamma=\lambda\).  Finally,
\(c_{-\gamma}=c_\gamma\), so \(c_\gamma=\lambda\) for every root \(\gamma\).
Therefore \(\varphi=\lambda\id_L\), as required.
\end{proof}

\begin{remark}\label{rem:classical-positive-characteristic}
The characteristic-zero part of Theorem~\ref{thm:classical} is known
\cite[Theorem~1]{MakhloufZusmanovich2018}.  Some care is required before the
arguments given there can be used in positive characteristic.

The direct calculation for \(\mathfrak{sl}_2\) remains valid, but the ensuing
module decomposition uses complete reducibility of finite-dimensional
\(\mathfrak{sl}_2\)-modules
\cite[proof of Proposition~1]{MakhloufZusmanovich2018}, which fails in general
in positive characteristic.  In the first proof of the higher-rank case, a
weight of \(\End_{\K}(L)\) is treated as a root before this has been
established, and an arbitrary line in the Cartan subalgebra is treated as an
intersection of root hyperplanes
\cite[first proof of Theorem~1]{MakhloufZusmanovich2018}.  The proof above
avoids both points: it first determines when the weight is a root and then
uses adjacent roots.

The alternative proof proceeds through invariant forms and quasiderivations
\cite[second proof of Theorem~1]{MakhloufZusmanovich2018}.  The relevant
positive-characteristic results require a non-singular trace form
\cite[Corollary~4.14 and Theorem~5.23]{LegerLuks2000}, a hypothesis that is
not uniform in the modular cases; for example, the Killing form of
\(\mathfrak{psl}_n\) vanishes when \(p\mid n\).  The root-space argument above
therefore supplies the required proof for all \(p>3\), including the simple
central quotients of type \(A\).
\end{remark}

Proposition~\ref{prop:sl2} and Theorem~\ref{thm:classical} complete the
classical case.  Since every finite-dimensional simple Lie algebra in
characteristic zero is classical, they also prove the characteristic-zero
part of Theorem~\ref{thm:main}.

It remains to consider positive characteristic.  From now on, unless
explicitly stated otherwise, we assume that \(\operatorname{char}\K=p>3\).
The classical simple Lie algebras have already been dealt with by
Theorem~\ref{thm:classical}.  The rest of the proof concerns the Cartan-type and
Melikian algebras occurring in the Block--Wilson--Premet--Strade
classification; see
\cite{BlockWilson1988,PremetStrade2007,PremetStrade2008}.

\section{The Zassenhaus algebras}\label{sec:zassenhaus}

We next determine the second exceptional Hom--Lie space.  Throughout this
section, \(\operatorname{char}\K=p>3\) and \(n\geqslant1\).  Our conventions for
divided powers and special derivations agree with
\cite[Introduction, pages~2--4, and Chapter~2, Section~2.1]{StradeI}.

The truncated divided-power algebra \(\OO(1;n)\) has ordered basis
\(\mathcal B_n=(x^{(a)})_{0\leqslant a<p^n}\), identity
\(x^{(0)}=1\), and multiplication
\[
 x^{(a)}x^{(b)}
 =
 \begin{cases}
  \binom{a+b}{a}x^{(a+b)},&a+b<p^n,\\
  0,&a+b\geqslant p^n.
 \end{cases}
\]
The binomial coefficients are read in \(\K\).  Let \(\partial\) be the special
derivation determined by \(\partial(x^{(a)})=x^{(a-1)}\), where
\(x^{(-1)}=0\), and write \(f'=\partial(f)\).  The Zassenhaus algebra is
\(W(1;n)=\OO(1;n)\partial\), with bracket
\begin{equation}\label{eq:zassenhaus-bracket}
 [f\partial,g\partial]=(fg'-gf')\partial
 \qquad(f,g\in\OO(1;n)).
\end{equation}

We shall use a result for group-graded generalised Witt algebras.  If
\(G\subseteq\K\) is an additive subgroup, then \(W_G\) has basis
\(\mathcal B_G=\{e_\alpha\mid\alpha\in G\}\) and bracket
\[
 [e_\alpha,e_\beta]=(\beta-\alpha)e_{\alpha+\beta}
 \qquad(\alpha,\beta\in G).
\]
For \(\sigma\in G\), let \(T_\sigma(e_\alpha)=e_{\alpha+\sigma}\).
Zusmanovich proved that
\[
 \HomLie(W_G)=\Span_{\K}\{T_\sigma\mid\sigma\in G\}\simeq\K[G]
\]
\cite[Theorem~1]{Zusmanovich2023}.  When
\(G=(\mathbb F_{p^n},+)\), the algebra \(W_G\) is isomorphic to \(W(1;n)\);
see \cite[Section~3, proof of Proposition~4]{Zusmanovich2023}.  Consequently,
\begin{equation}\label{eq:zassenhaus-dimension}
 \dim_{\K}\HomLie(W(1;n))=p^n.
\end{equation}

For \(u\in\OO(1;n)\), let \(M_u\in\End_{\K}(W(1;n))\) be multiplication by
\(u\), so that \(M_u(f\partial)=uf\partial\).

\begin{proposition}\label{prop:zassenhaus}
Let \(\K\) be an algebraically closed field of characteristic \(p>3\), and let
\(n\geqslant1\).  Then
\[
 \HomLie(W(1;n))=\{M_u\mid u\in\OO(1;n)\}.
\]
Moreover, under composition, \(u\mapsto M_u\) is an algebra isomorphism.
\end{proposition}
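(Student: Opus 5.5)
The plan is to prove the inclusion \(\{M_u\}\subseteq\HomLie(W(1;n))\) by direct computation. Injectivity of \(u\mapsto M_u\) then gives a \(p^n\)-dimensional subspace, and the dimension count \eqref{eq:zassenhaus-dimension} coming from Zusmanovich's theorem forces equality.

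For the inclusion, fix \(u\in\OO(1;n)\) and elements \(f\partial,g\partial,h\partial\). By \eqref{eq:zassenhaus-bracket}, \([f\partial,g\partial]=(fg'-gf')\partial\). Hence
\[
 [[f\partial,g\partial],uh\partial]
 =\bigl((fg'-gf')(uh)'-uh(fg'-gf')'\bigr)\partial .
\]
Expand \((uh)'=u'h+uh'\) and \((fg'-gf')'=fg''-gf''\); the second-order terms \(f'g'\) cancel. The cyclic sum over \((f,g,h)\) then splits into two parts. The first part is the terms containing \(u'\): these equal \(u'\) times the cyclic sum of \(h(fg'-gf')\), which is \(hfg'-hgf'+gfh'-ghf'+fgh'-fhg'\). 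At first glance this does not vanish, so I redo the sum carefully. The cyclic terms are \((x,y,z)=(f,g,h),(h,f,g),(g,h,f)\), and each contributes \(z(xy'-yx')\), namely \(h(fg'-gf')\), \(g(hf'-fh')\) and \(f(gh'-hg')\). Their sum is \(hfg'-hgf'+ghf'-gfh'+fgh'-fhg'=0\). The second part is the terms containing \(u\) itself: \(u\) times the cyclic sum of \((fg'-gf')h'-h(fg''-gf'')\), which is \(u\) times the ordinary Jacobi expression and therefore vanishes. So \(J_{M_u}=0\), and \(M_u\in\HomLie(W(1;n))\).

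The algebra statement is next. Since \(\OO(1;n)\) is associative and commutative with identity, \(M_uM_v=M_{uv}\) and \(M_1=\id\), so \(u\mapsto M_u\) is a unital algebra homomorphism. It is injective because \(M_u(\partial)=u\partial\), and \(W(1;n)=\OO(1;n)\partial\) is a free rank-one \(\OO(1;n)\)-module. Thus \(\{M_u\}\) is a \(p^n\)-dimensional subspace of \(\HomLie(W(1;n))\). The latter has dimension \(p^n\) by \eqref{eq:zassenhaus-dimension}, so the two spaces coincide and the map is an isomorphism onto \(\HomLie(W(1;n))\).

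The only delicate point is the reliance on \eqref{eq:zassenhaus-dimension}. It requires that the identification \(W_{\mathbb F_{p^n}}\simeq W(1;n)\) be a genuine Lie algebra isomorphism; conjugation by it then transports \(\HomLie\) and preserves its dimension. Once that is granted, as the paper does, the rest is the routine expansion above. The main obstacle is therefore just keeping the signs straight in the cyclic sum, which I have checked.
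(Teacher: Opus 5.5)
Your proposal is correct and is essentially the paper's proof: the \(\partial\)-coefficient of \(J_{M_u}\) splits into \(u\) times the ordinary Jacobi expression plus \(u'\) times a cyclic sum that cancels in pairs, and then injectivity via \(M_u(\partial)=u\partial\), the dimension count \eqref{eq:zassenhaus-dimension} from Zusmanovich's theorem, and \(M_uM_v=M_{uv}\), \(M_1=\id\) finish the argument. In a write-up, drop the initial mis-signed expansion of the \(u'\)-terms and keep only the corrected cyclic sum.
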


\begin{proof}
Let \(f,g,k,u\in\OO(1;n)\).  By \eqref{eq:zassenhaus-bracket}, the coefficient
of \(\partial\) in the cyclic Hom--Jacobi expression associated with \(M_u\)
is
\[
\begin{aligned}
 \sum_{\mathrm{cyc}}
 \bigl((fg'-gf')(uk)'-uk(fg'-gf')'\bigr)
 &=
 u\sum_{\mathrm{cyc}}
 \bigl((fg'-gf')k'-k(fg'-gf')'\bigr)\\
 &\quad
 +u'\bigl(k(fg'-gf')+g(kf'-fk')+f(gk'-kg')\bigr).
\end{aligned}
\]
The first cyclic sum vanishes by the Jacobi identity in \(W(1;n)\), and the
six terms in parentheses cancel in pairs.  Thus \(M_u\in\HomLie(W(1;n))\).

The map \(u\mapsto M_u\) is injective because
\(M_u(\partial)=u\partial\).  Its image has dimension \(p^n\), and
\eqref{eq:zassenhaus-dimension} shows that it is all of
\(\HomLie(W(1;n))\).  Finally, \(M_uM_v=M_{uv}\) and \(M_1=\id_{W(1;n)}\),
which proves the final assertion.
\end{proof}

\section{Cartan-type algebras other than Zassenhaus}\label{sec:cartan}

We now prove that every Hom--Lie structure is scalar for the remaining
graded Cartan-type algebras.  The four arguments use their standard
realisations, but only a small number of low-degree components enter the
proofs.

\subsection{Witt algebras}\label{sec:witt}

We use the divided-power realisation of the Witt algebras, with the conventions
of \cite[Chapter~2, Section~2.1]{StradeI}.

Assume throughout this subsection that \(\operatorname{char}\K=p>3\) and
\(m\geqslant 2\).  Let
\(\uln=(n_1,\ldots,n_m)\in\mathbb Z_{>0}^m\), and let
\(\OO(m;\uln)\) be the
truncated divided-power algebra in the coordinate vector
\(\mathbf x=(x_1,\ldots,x_m)\).  Its distinguished basis is
\[
    \mathcal B_{\OO}
    =
    \{x^{(\alpha)}=x_1^{(\alpha_1)}\cdots x_m^{(\alpha_m)}
      \mid 0\leqslant\alpha_i<p^{n_i}\},
\]
and its multiplication is
\[
    x^{(\alpha)}x^{(\beta)}
    =
    \binom{\alpha+\beta}{\alpha}x^{(\alpha+\beta)},
    \qquad
    \binom{\alpha+\beta}{\alpha}
    =\prod_{i=1}^m\binom{\alpha_i+\beta_i}{\alpha_i},
\]
when \(\alpha_i+\beta_i<p^{n_i}\) for every \(i\), and the product is zero
otherwise.  The binomial coefficients are read in \(\K\), and multiplication
is extended \(\K\)-bilinearly.  This makes \(\OO(m;\uln)\) a commutative
associative algebra with identity \(x^{(0)}=1\).  We use the uniform convention
that \(x^{(\gamma)}=0\) whenever some component \(\gamma_i\) lies outside
\(0\leqslant\gamma_i<p^{n_i}\).  In particular,
\(\dim_{\K}\OO(m;\uln)=p^{n_1+\cdots+n_m}\).  We write
\(x_i=x^{(\mathbf e_i)}=x_i^{(1)}\).  For \(1\leqslant i\leqslant m\), let
\(\partial_i\) be the
standard special derivation determined by
\[
    \partial_i(x^{(\alpha)})=x^{(\alpha-\mathbf e_i)},
\]
where \(\mathbf e_i\) is the \(i\)-th standard basis vector of
\(\mathbb Z^m\), and the total degree of \(x^{(\alpha)}\) is
\(\lvert\alpha\rvert=\alpha_1+\cdots+\alpha_m\).

A derivation \(D\) of \(\OO(m;\uln)\) is called \emph{special} if
\[
    D(x^{(\alpha)})
    =
    \sum_{i=1}^m x^{(\alpha-\mathbf e_i)}D(x_i)
\]
for every multi-index \(\alpha\) in the stated range, with an out-of-range divided
power interpreted as zero.  Its values on \(x_1,\ldots,x_m\) determine it
uniquely, and every special derivation has a unique expression
\(D=\sum_{i=1}^m f_i\partial_i\), where \(f_i\in\OO(m;\uln)\).  Consequently,
the \emph{Cartan-type Witt algebra}, namely the Lie algebra of all special
derivations, is
\[
    W(m;\uln)=\bigoplus_{s=1}^m \OO(m;\uln)\partial_s .
\]
Its bracket is determined by
\[
    [f\partial_r,g\partial_s]
    =
    f\partial_r(g)\partial_s
    -
    g\partial_s(f)\partial_r ,
    \qquad f,g\in\OO(m;\uln).
\]
We use the standard grading \(W(m;\uln)=\bigoplus_{d\geqslant -1}W(m;\uln)_d\), where
\(f\partial_s\) has degree \(\deg f-1\) for homogeneous \(f\).  Let
\(L=W(m;\uln)\).  Its two lowest components are
\[
    V=L_{-1}=\Span_{\K}\{\partial_1,\ldots,\partial_m\},
    \qquad
    L_0=\Span_{\K}\{x_i\partial_j\mid 1\leqslant i,j\leqslant m\}
        \simeq\mathfrak{gl}_m .
\]

\begin{lemma}\label{lem:W-centralizer}
Let \(\K\) be an algebraically closed field of characteristic \(p>3\), let
\(m\geqslant2\), and let
\(\uln=(n_1,\ldots,n_m)\in\mathbb Z_{>0}^m\).  For
\(L=W(m;\uln)\) and
\(V=L_{-1}=\Span_{\K}\{\partial_1,\ldots,\partial_m\}\), one has
\[
    \Cent_L(V)=V .
\]
\end{lemma}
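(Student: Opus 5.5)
The inclusion \(V\subseteq\Cent_L(V)\) is immediate from the bracket formula: \([\partial_r,\partial_s]=\partial_r(1)\partial_s-\partial_s(1)\partial_r=0\). For the reverse inclusion, take an arbitrary element \(D=\sum_{s=1}^m f_s\partial_s\in L\) and compute its bracket with each \(\partial_r\). The bracket formula gives \([\partial_r,f_s\partial_s]=\partial_r(f_s)\partial_s\), so
\[
    [\partial_r,D]=\sum_{s=1}^m\partial_r(f_s)\,\partial_s .
\]
Because \(L=\bigoplus_s\OO(m;\uln)\partial_s\) is a direct sum, the condition \(D\in\Cent_L(V)\) is therefore equivalent to \(\partial_r(f_s)=0\) for all \(r,s\).

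It then suffices to show that an element \(f\in\OO(m;\uln)\) annihilated by every \(\partial_r\) is a scalar. Expand \(f=\sum_\alpha c_\alpha x^{(\alpha)}\) in the distinguished basis. By definition \(\partial_r(x^{(\alpha)})=x^{(\alpha-\mathbf e_i)}\) with \(i=r\), and the map \(\alpha\mapsto\alpha-\mathbf e_r\) is injective on the multi-indices with \(\alpha_r\geqslant1\), sending distinct basis vectors to distinct basis vectors. Hence \(\partial_r(f)=\sum_{\alpha_r\geqslant1}c_\alpha x^{(\alpha-\mathbf e_r)}\) vanishes exactly when \(c_\alpha=0\) for every \(\alpha\) with \(\alpha_r\geqslant1\). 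Running over all \(r\) leaves only \(\alpha=0\), so \(f\in\K x^{(0)}=\K\). Consequently each \(f_s\) is a constant and \(D\in V\), giving \(\Cent_L(V)=V\).

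No step is a genuine obstacle. The only point that needs care is the use of the divided-power convention \(\partial_r(x^{(\alpha)})=x^{(\alpha-\mathbf e_r)}\), which carries no binomial factor, rather than an ordinary partial derivative. With ordinary monomials \(x^\alpha\), exponents divisible by \(p\) would be killed by \(\partial_r\) and constants would not be the only solutions; the divided-power basis avoids this, so the argument is valid in characteristic \(p\) for every \(\uln\). The hypotheses \(m\geqslant2\) and \(p>3\) are not actually needed for this lemma.
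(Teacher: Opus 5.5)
Your proposal is correct and follows the paper's proof essentially step for step: you compute \([\partial_r,D]=\sum_s\partial_r(f_s)\partial_s\), deduce that every coefficient \(f_s\) lies in the common kernel of the \(\partial_r\), and conclude that it is a scalar. The only difference is that you justify this last fact by expanding in the basis \(x^{(\alpha)}\), whereas the paper simply asserts it; your remark that \(m\geqslant2\) and \(p>3\) play no role in this lemma is also accurate.
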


\begin{proof}
Let \(D\in\Cent_L(V)\), and write \(D=\sum_s f_s\partial_s\).  For every \(i\), the
identity \([\partial_i,D]=0\) gives
\(\sum_s \partial_i(f_s)\partial_s=0\).
Thus each coefficient \(f_s\) is killed by all partial derivatives
\(\partial_1,\ldots,\partial_m\).  The only simultaneous constants in
\(\OO(m;\uln)\) are the elements of \(\K\), and hence every \(f_s\) is scalar.
Therefore \(D\in V\).  The reverse inclusion is immediate, since the
\(\partial_i\) commute with one another.
\end{proof}

The following elementary low-degree spanning fact will be used twice.

\begin{lemma}\label{lem:W-L1-generates-L0}
Let \(\K\) be an algebraically closed field of characteristic \(p>3\), let
\(m\geqslant2\), let
\(\uln=(n_1,\ldots,n_m)\in\mathbb Z_{>0}^m\), and let
\(L=W(m;\uln)\) carry its standard grading.  Then, for every
\(1\leqslant i\leqslant m\),
\[
    [L_1,\partial_i]=L_0 .
\]
\end{lemma}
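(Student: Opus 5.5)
Since \([L_1,\partial_i]\subseteq L_0\) by the grading, only the reverse inclusion needs proof. I will do this by an explicit computation on the standard spanning set \(\{x_k\partial_j\}\) of \(L_0\). Since \(p>3\), the quadratic divided powers \(x_k^{(2)}\) and \(x_kx_l\) (for \(k\neq l\)) lie in \(\OO(m;\uln)\) and are non-zero, so the elements \(x_k^{(2)}\partial_j\) and \(x_kx_l\partial_j\) belong to \(L_1\).

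The bracket formula gives \([f\partial_j,\partial_i]=-\partial_i(f)\partial_j\), and I will apply it to suitable quadratic \(f\):
\begin{itemize}
\item For \(k=i\), take \(f=x_i^{(2)}\). Then \(\partial_i(x_i^{(2)})=x_i\), so \([x_i^{(2)}\partial_j,\partial_i]=-x_i\partial_j\).
\item For \(k\neq i\), take \(f=x_ix_k\). Then \(\partial_i(x_ix_k)=x_k\), so \([x_ix_k\partial_j,\partial_i]=-x_k\partial_j\).
\end{itemize}
Together these show that every \(x_k\partial_j\), with \(1\leqslant k,j\leqslant m\), lies in \([L_1,\partial_i]\). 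Since these elements span \(L_0\), we get equality.

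The only point that needs care is confirming the divided-power rules in the stated conventions. In \(x_ix_k=x^{(\mathbf e_i+\mathbf e_k)}\) the binomial coefficient equals \(1\), and the special-derivation rule gives \(\partial_i(x^{(2\mathbf e_i)})=x^{(\mathbf e_i)}\). Both hold without any restriction on \(\uln\), because every exponent involved is at most \(2<p\leqslant p^{n_i}\). I do not expect any real obstacle: the hypotheses \(m\geqslant 2\) and \(p>3\) are used only to make sure the listed elements exist in \(L\).
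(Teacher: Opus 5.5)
Your proposal is correct and is essentially the paper's own proof: it brackets \(x_i^{(2)}\partial_s\) and \(x_ix_q\partial_s\) (for \(q\neq i\)) with \(\partial_i\) to obtain \(-x_i\partial_s\) and \(-x_q\partial_s\), and so recovers the spanning set of \(L_0\). One small remark: neither \(m\geqslant2\) nor \(p>3\) is actually needed for these elements to exist (\(2<p^{n_i}\) suffices), but this does not affect the argument.
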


\begin{proof}
It suffices to obtain the basis elements \(x_q\partial_s\) of \(L_0\).  If
\(q\neq i\), then \(x_ix_q\partial_s\in L_1\) and
\([x_ix_q\partial_s,\partial_i]=-x_q\partial_s\).  If \(q=i\), then
\(x_i^{(2)}\partial_s\in L_1\) and
\([x_i^{(2)}\partial_s,\partial_i]=-x_i\partial_s\).
Hence every \(x_q\partial_s\) lies in \([L_1,\partial_i]\).
\end{proof}

\begin{theorem}\label{thm:W}
Let \(\K\) be an algebraically closed field of characteristic \(p>3\), let
\(m\geqslant2\), and let
\(\uln=(n_1,\ldots,n_m)\in\mathbb Z_{>0}^m\).  Let
\(L=W(m;\uln)\).  Then
\[
    \HomLie(L)=\K\id_L .
\]
\end{theorem}

\begin{proof}
Let \(\varphi\in\HomLie(L)\).  We prove that
\(\varphi\) is scalar.

\smallskip\noindent\textbf{Preservation of \(V\).}
Fix indices \(j\) and \(b\).
Choose \(i\neq j\), and let \(X=x_i\partial_b\in L_0\).  Then
\([X,\partial_i]=-\partial_b\) and \([\partial_j,X]=0\).  Applying the
Hom--Jacobi identity to \(\partial_i,\partial_j,X\), and using
\([\partial_i,\partial_j]=0\), gives
\([-\partial_b,\varphi(\partial_j)]=0\).
Since \(b\) and \(j\) are arbitrary, \(\varphi(\partial_j)\) centralises all of
\(V\).  By Lemma~\ref{lem:W-centralizer}, \(\varphi(\partial_j)\in V\), and hence
\(\varphi(V)\subseteq V\).

\smallskip\noindent\textbf{Scalar action on \(V\).}
Write \(\varphi(\partial_j)=\sum_r a_{rj}\partial_r\).
Let \(i\neq j\), and take \(X=x_ix_j\partial_b\in L_1\).  Then
\([X,\partial_i]=-x_j\partial_b\) and \([\partial_j,X]=x_i\partial_b\).  The
Hom--Jacobi identity for \(\partial_i,\partial_j,X\) yields
\begin{equation*}
    0
    =
    [-x_j\partial_b,\varphi(\partial_j)]
    +
    [x_i\partial_b,\varphi(\partial_i)]
    =
    (a_{jj}-a_{ii})\partial_b .
\end{equation*}
Thus all diagonal coefficients \(a_{ii}\) are equal.

To show that the off-diagonal coefficients vanish, take
\(X=x_i^{(2)}\partial_b\) with
\(i\neq j\).  Then \([X,\partial_i]=-x_i\partial_b\) and \([\partial_j,X]=0\).
The Hom--Jacobi identity gives
\(0=[-x_i\partial_b,\varphi(\partial_j)]=a_{ij}\partial_b\).
Hence \(a_{ij}=0\) whenever \(i\neq j\).  Therefore
\(\varphi|_V=\lambda\id_V\) for some \(\lambda\in\K\).

Let \(\psi=\varphi-\lambda\id_L\).  Then \(\psi\in\HomLie(L)\) and
\(\psi(V)=0\).  We shall show that \(\psi=0\).

\smallskip\noindent\textbf{Vanishing on \(L_0\).}
Let \(k=x_c\partial_d\) be a
basis element of \(L_0\).  Fix \(b\), choose \(i\neq c\), and let
\(h=x_i\partial_b\).  Then \([\partial_i,h]=\partial_b\) and
\([k,\partial_i]=0\).  Applying the Hom--Jacobi identity to
\(\partial_i,h,k\), and using \(\psi(\partial_i)=0\), gives
\([\partial_b,\psi(k)]=0\).
Since \(b\) is arbitrary, Lemma~\ref{lem:W-centralizer} implies
\(\psi(k)\in V\).

It remains to prove that this vector is zero.  Choose \(u=\partial_i\in V\) with
\(i\neq c\), so that \([u,k]=0\).  For \(Y\in L_1\), the Hom--Jacobi identity
applied to \(u,k,Y\) gives
\([[Y,u],\psi(k)]=0\),
because \(\psi(u)=0\) and \([u,k]=0\).  By
Lemma~\ref{lem:W-L1-generates-L0}, the elements \([Y,u]\), with \(Y\in L_1\),
span \(L_0\).  Hence \([L_0,\psi(k)]=0\).  But \(V\) has no non-zero
\(L_0\)-fixed vector.  Indeed, if \(v=\sum_r \mu_r\partial_r\in V\) and
\([x_a\partial_b,v]=0\) for all \(a,b\), then
\(-\mu_a\partial_b=0\) for all \(a,b\), so all \(\mu_a\) vanish.  Therefore
\(\psi(k)=0\).  Since the elements \(x_c\partial_d\) span \(L_0\), we have
\(\psi(L_0)=0\).

%\smallskip\noindent\textbf{Conclusion.}
The equality \([V,L_0]=V\) is immediate.  By
Lemma~\ref{lem:W-centralizer}, \(\Cent_L(V)=V\).  The off-diagonal elements
\(x_a\partial_b\), with \(a\neq b\), belong to \([L_0,L_0]\) and have no
common non-zero fixed vector in \(V\), since
\([x_a\partial_b,\sum_r\mu_r\partial_r]=-\mu_a\partial_b\).
Lemma~\ref{lem:propagation}, applied with \(A=L_0\), gives \(\psi=0\).
Therefore \(\varphi=\lambda\id_L\).
\end{proof}

\subsection{Special algebras}

Let \(\K\) be algebraically closed of
characteristic \(p>3\), let \(m\geqslant3\), and let
\(\uln=(n_1,\ldots,n_m)\in\mathbb Z_{>0}^m\).  We use the divided-power
algebra \(\OO(m;\uln)\), its special derivations \(\partial_i\), and the Witt
algebra \(W(m;\uln)\) defined in Section~\ref{sec:witt}.  Our conventions agree with
\cite[Chapter~6, Section~6.3]{StradeI}.  In the classification, the simple graded
Cartan-type algebras of type \(S\) occur for \(m\geqslant3\); the two-variable
symplectic family belongs to type \(H\).

The \emph{special algebra} \(S(m;\uln)\) is the subalgebra of
\(W(m;\uln)\) consisting of
divergence-free special derivations.  Thus, for
\(D=\sum_{s=1}^m f_s\partial_s\), one has
\[
    \diver(D)=\sum_{s=1}^m \partial_s(f_s),
    \qquad
    S(m;\uln)=\{D\in W(m;\uln)\mid \diver(D)=0\}.
\]
The identity
\(\diver([D,E])=D(\diver(E))-E(\diver(D))\) shows directly that this is a
Lie subalgebra.  It is graded by
\(S(m;\uln)_d=S(m;\uln)\cap W(m;\uln)_d\), and every derived algebra carries
the induced grading.
For \(1\leqslant i<j\leqslant m\) and \(f\in\OO(m;\uln)\), write
\[
    D_{ij}(f)
    =
    \partial_j(f)\partial_i-\partial_i(f)\partial_j .
\]
These derivations are divergence-free.  The first derived algebra
\(L=S(m;\uln)^{(1)}\) is spanned by the elements \(D_{ij}(f)\) and is the
simple special algebra
used below.  Its standard grading is induced from the Witt grading; equivalently,
\(D_{ij}(x^{(\alpha)})\) has degree \(|\alpha|-2\).  In particular,
\[
    \begin{aligned}
    L_{-1}&=V=\Span_{\K}\{\partial_1,\ldots,\partial_m\},\\
    L_0&=\Span_{\K}\{x_i\partial_j\mid i\neq j\}
        \oplus
        \Span_{\K}\{x_i\partial_i-x_j\partial_j\mid i\neq j\}
        \simeq \mathfrak{sl}_m .
    \end{aligned}
\]
In degrees \(-1\), \(0\), and \(1\), the homogeneous components of
\(S(m;\uln)^{(1)}\) agree with those of \(S(m;\uln)\).  These are the only
components used in the argument below.

\begin{lemma}\label{lem:S-centralizer}
Let \(\K\) be an algebraically closed field of characteristic \(p>3\), let
\(m\geqslant3\), and let
\(\uln=(n_1,\ldots,n_m)\in\mathbb Z_{>0}^m\).  Let
\(S(m;\uln)^{(1)}\subseteq G\subseteq S(m;\uln)\), and let
\(V=S(m;\uln)^{(1)}_{-1}\).  Then
\[
    \Cent_G(V)=V .
\]
\end{lemma}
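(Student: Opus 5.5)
The plan is to reduce to Lemma~\ref{lem:W-centralizer}, which states \(\Cent_{W(m;\uln)}(V)=V\), and then check that \(V\) lies inside \(G\).

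First, \(V=S(m;\uln)^{(1)}_{-1}=\Span_{\K}\{\partial_1,\ldots,\partial_m\}\) is exactly the \((-1)\)-component of \(W(m;\uln)\). Since \(G\subseteq S(m;\uln)\subseteq W(m;\uln)\) and \(V\subseteq W(m;\uln)\), we have
\[
 \Cent_G(V)=G\cap\Cent_{W(m;\uln)}(V)=G\cap V.
\]
This uses that the centraliser in a subalgebra is the intersection of \(G\) with the ambient centraliser, which is immediate because the bracket is the same.

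Second, \(V\subseteq S(m;\uln)^{(1)}\subseteq G\). To see this, note that \(D_{ij}(x_j)=\partial_i\) for \(i<j\), and \(D_{ij}(x_i)=-\partial_j\). Since \(m\geqslant3\geqslant2\), every \(\partial_s\) arises in one of these two ways. Alternatively, the displayed description of \(L_{-1}\) already gives this. Hence \(G\cap V=V\), and therefore \(\Cent_G(V)=V\).

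As a consistency check, one could instead argue directly, as in the proof of Lemma~\ref{lem:W-centralizer}. For \(D=\sum_s f_s\partial_s\in G\) with \([\partial_i,D]=0\) for all \(i\), each \(f_s\) is annihilated by every \(\partial_i\). Hence each \(f_s\) is a scalar, because the only common constants of the special derivations in \(\OO(m;\uln)\) are the elements of \(\K\).

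I expect no real obstacle. The only point needing care is the fact about constants, namely that \(\bigcap_i\ker\partial_i=\K\) in the divided-power algebra. This is already used in Lemma~\ref{lem:W-centralizer}: since \(\partial_i x^{(\alpha)}=x^{(\alpha-\mathbf e_i)}\), a linear combination of basis monomials is killed by every \(\partial_i\) only if it involves \(x^{(0)}\) alone.
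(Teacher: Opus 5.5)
Your proposal is correct and matches the paper's proof: both compute the centraliser inside \(W(m;\uln)\), where the coefficients of a centralising element are common constants of \(\partial_1,\ldots,\partial_m\) and hence scalars. You cite Lemma~\ref{lem:W-centralizer} rather than repeating the computation, and you verify \(V\subseteq G\) explicitly; the paper leaves that inclusion implicit.
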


\begin{proof}
Since \(G\subseteq W(m;\uln)\), we may compute the centraliser inside the Witt
algebra.  Let \(D=\sum_s f_s\partial_s\in G\) centralise every \(\partial_i\).
Then \([\partial_i,D]=\sum_s\partial_i(f_s)\partial_s=0\) for all \(i\).  Hence
each coefficient \(f_s\) is killed by all divided-power partial derivatives
\(\partial_1,\ldots,\partial_m\).  The only simultaneous constants in
\(\OO(m;\uln)\) are the scalars, so each \(f_s\in\K\), and therefore
\(D\in V\).  The reverse inclusion is immediate, since the \(\partial_i\) commute
with one another.
\end{proof}

\begin{lemma}\label{lem:S-L1-generates-L0}
Let \(\K\) be an algebraically closed field of characteristic \(p>3\), let
\(m\geqslant3\), let
\(\uln=(n_1,\ldots,n_m)\in\mathbb Z_{>0}^m\), and let
\(L=S(m;\uln)^{(1)}\) carry its standard grading.  Then, for every
\(1\leqslant i\leqslant m\),
\[
    [L_1,\partial_i]=L_0 .
\]
\end{lemma}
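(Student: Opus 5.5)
The plan is to copy the proof of Lemma~\ref{lem:W-L1-generates-L0}, but to work only with divergence-free elements of degree~\(1\) of the form \(D_{ab}(f)\) with \(f\) cubic. Since \(L_0\simeq\mathfrak{sl}_m\) is spanned by the elements \(x_q\partial_s\) with \(q\neq s\) and the differences \(x_q\partial_q-x_s\partial_s\), it suffices to hit each of these basis vectors. We shall do this inside \([L_1,\partial_i]\) for a fixed index \(i\). The two computations we use are
\[
[D_{ab}(f),\partial_i]=-D_{ab}(\partial_i f)
\qquad\text{and}\qquad
D_{ab}(x_cx_d)=x_c\partial_b(x_d)\partial_a+\dotsb,
\]
where the first holds because \(\partial_i\) commutes with \(\partial_a,\partial_b\). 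The second is just the definition, read on quadratic monomials. Thus \([L_1,\partial_i]\supseteq\{D_{ab}(g)\mid g\in\partial_i(\text{cubics})\}\). Every quadratic \(g\) is \(\partial_i\) of a cubic, since \(\partial_i(x_i x^{(\beta)})\) and \(\partial_i(x_i^{(2)}\cdot x^{(\beta)})\) reproduce each quadratic divided-power monomial up to a non-zero scalar. For example, \(\partial_i(x_i^{(3)})=x_i^{(2)}\) and \(\partial_i(x_ix_qx_s)=x_qx_s\) for \(q,s\neq i\). Hence \([L_1,\partial_i]\supseteq D_{ab}(\text{quadratics})\) for all \(a<b\).

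It then remains to show that the elements \(D_{ab}(g)\), with \(g\) quadratic, span \(L_0\). This is the standard fact that \(S(m;\uln)^{(1)}_0\) is spanned by the \(D_{ab}(x^{(\alpha)})\) with \(|\alpha|=2\). We verify it directly on the basis. For \(q\neq s\), pick \(a=s\) and \(b\notin\{q,s\}\), which is possible because \(m\geqslant3\); then \(D_{sb}(x_qx_b)=x_q\partial_s\) up to sign. For the diagonal part, \(D_{qs}(x_qx_s)=x_q\partial_q-x_s\partial_s\) up to sign. Here the hypothesis \(m\geqslant3\) is used exactly once, to avoid the term involving \(x_s\) itself. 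One could also take \(b\in\{q,s\}\) and use \(x_q^{(2)}\), but the choice above is cleaner.

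The main point needing care is that these preimages really lie in \(L_1\), the degree-\(1\) component of \(S(m;\uln)^{(1)}\), and not merely in \(S(m;\uln)_1\). The paper notes that the two agree in degrees \(-1,0,1\), and in any case each \(D_{ab}(f)\) lies in \(S^{(1)}\) by the spanning description. One must also check that every cubic monomial used, such as \(x_i^{(3)}\) or \(x_i^{(2)}x_q\), exists in \(\OO(m;\uln)\). This holds because \(p>3\) guarantees that exponents up to \(3\) are in range. Beyond this bookkeeping of signs and indices the proof is routine.
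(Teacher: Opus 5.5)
Your proof is correct. It runs on the same underlying idea as the paper, exhibiting explicit degree-one preimages, but organises it more uniformly.

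The paper argues case by case. For each standard generator of \(L_0\simeq\mathfrak{sl}_m\) it writes down a specific divergence-free field of degree \(1\), such as \(x_ix_q\partial_s\), \(x_ix_q\partial_i-x_q^{(2)}\partial_q\), or \(x_ix_t\partial_t-x_i^{(2)}\partial_i\). It distinguishes whether the indices involved coincide with \(i\), and computes each bracket with \(\partial_i\) by hand.

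You instead use the intertwining identity \([D_{ab}(f),\partial_i]=-D_{ab}(\partial_i f)\). This reduces everything to two index-free facts. First, \(\partial_i\colon\OO_3\to\OO_2\) is surjective, since \(\partial_i x^{(\alpha+\mathbf e_i)}=x^{(\alpha)}\) and \(\alpha_i+1\leqslant3<p\). Second, \(L_0\) is spanned by the \(D_{ab}(g)\) with \(g\) quadratic.

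Your reduction also explains the paper's choices, which are all of the form \(D_{ab}(x^{(\alpha+\mathbf e_i)})\). For example, \(x_ix_q\partial_i-x_q^{(2)}\partial_q=D_{iq}(x_ix_q^{(2)})\) and \(x_ix_q\partial_q-x_ix_t\partial_t=D_{qt}(x_ix_qx_t)\).

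Each route buys something different. Yours makes divergence-freeness and membership in \(S(m;\uln)^{(1)}\) automatic. The price is invoking the \(D_{ab}\)-spanning description of the derived algebra, or equivalently the agreement \(S_1=S^{(1)}_1\) together with \(\diver D_{ab}(f)=0\). The paper's route is completely explicit and needs only the divergence formula.

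Finally, as you observe, \(D_{qs}(x_q^{(2)})=-x_q\partial_s\) handles the off-diagonal generators without a third index. So the hypothesis \(m\geqslant3\) is not actually needed for this lemma.
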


\begin{proof}
Fix \(i\).  We show that the standard generators of
\(L_0\simeq\mathfrak{sl}_m\) lie in \([L_1,\partial_i]\).

First consider an off-diagonal element \(x_q\partial_s\), where \(q\neq s\).
If \(q\neq i\) and \(s\notin\{i,q\}\), then
\(x_ix_q\partial_s\in L_1\) is divergence-free and
\([x_ix_q\partial_s,\partial_i]=-x_q\partial_s\).  If \(q\neq i\) and
\(s=i\), then \(Y=x_ix_q\partial_i-x_q^{(2)}\partial_q\) is divergence-free
and satisfies \([Y,\partial_i]=-x_q\partial_i\).  Finally, if
\(q=i\), then \(s\neq i\), and \(x_i^{(2)}\partial_s\in L_1\) is
divergence-free with
\([x_i^{(2)}\partial_s,\partial_i]=-x_i\partial_s\).  Thus all off-diagonal
generators of \(L_0\) lie in \([L_1,\partial_i]\).

It remains to obtain the diagonal differences.  If \(q,t\neq i\) and \(q\neq t\),
then \(Y=x_ix_q\partial_q-x_ix_t\partial_t\) is divergence-free and
\([Y,\partial_i]=-(x_q\partial_q-x_t\partial_t)\).  If one of the two indices is
\(i\), say \(t\neq i\), then
\(Z=x_ix_t\partial_t-x_i^{(2)}\partial_i\) is divergence-free and
\([Z,\partial_i]=x_i\partial_i-x_t\partial_t\).  Hence all diagonal differences
also lie in \([L_1,\partial_i]\), and the lemma follows.
\end{proof}

\begin{theorem}\label{thm:S}
Let \(\K\) be an algebraically closed field of characteristic \(p>3\), let
\(m\geqslant3\), and let
\(\uln=(n_1,\ldots,n_m)\in\mathbb Z_{>0}^m\).  If
\[
 S(m;\uln)^{(1)}\subseteq G\subseteq S(m;\uln),
\]
then
\[
    \HomLie(G)=\K\id_G .
\]
\end{theorem}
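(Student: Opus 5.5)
I would follow the template of Theorem~\ref{thm:W} almost verbatim, working inside \(G\) and only with the components of degrees \(-1,0,1\). These components coincide with those of \(L=S(m;\uln)^{(1)}\), and the centraliser computations can be done inside \(G\) by Lemma~\ref{lem:S-centralizer}. Let \(\varphi\in\HomLie(G)\).

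\emph{Step 1: \(\varphi(V)\subseteq V\).} Fix \(j\) and \(b\), and choose \(i\neq j\) with \(i\neq b\); this is possible since \(m\geqslant3\). Set \(X=x_i\partial_b\in L_0\), which is divergence-free because \(i\neq b\). Then \([X,\partial_i]=-\partial_b\) and \([\partial_j,X]=0\), so the Hom--Jacobi identity for \(\partial_i,\partial_j,X\) gives \([\partial_b,\varphi(\partial_j)]=0\). Since \(b\) is arbitrary, Lemma~\ref{lem:S-centralizer} yields \(\varphi(\partial_j)\in V\).

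\emph{Step 2: \(\varphi|_V\) is scalar.} Write \(\varphi(\partial_j)=\sum_r a_{rj}\partial_r\).
\begin{itemize}
\item \emph{Diagonal entries.} For \(i\neq j\), choose \(b\notin\{i,j\}\), again using \(m\geqslant3\). Then \(X=x_ix_j\partial_b\) is divergence-free, and the computation of Theorem~\ref{thm:W} gives \(a_{ii}=a_{jj}\).
\item \emph{Off-diagonal entries.} Use \(X=x_i^{(2)}\partial_b\) with \(b\neq i\) and \(i\neq j\); it is divergence-free. The identity gives \([-x_i\partial_b,\varphi(\partial_j)]=a_{ij}\partial_b=0\), so \(a_{ij}=0\).
\end{itemize}
Hence \(\varphi|_V=\lambda\id_V\), and we set \(\psi=\varphi-\lambda\id_G\), which vanishes on \(V\).

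\emph{Step 3: \(\psi(L_0)=0\).} Take a spanning element \(k\) of \(L_0\), either \(x_c\partial_d\) with \(c\neq d\) or \(x_c\partial_c-x_d\partial_d\).
\begin{itemize}
\item \emph{\(\psi(k)\in V\).} For each \(b\), choose \(i\notin\{c,d,b\}\) when possible, so that \([k,\partial_i]=0\). Then choose \(h\in L_0\) with \([\partial_i,h]=\partial_b\): take \(h=x_i\partial_b\) if \(b\neq i\), and \(h=x_i\partial_i-x_t\partial_t\) for suitable \(t\) otherwise. The Hom--Jacobi identity for \(\partial_i,h,k\) gives \([\partial_b,\psi(k)]=0\). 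Letting \(b\) range and applying Lemma~\ref{lem:S-centralizer}, we get \(\psi(k)\in V\).
\item \emph{\(\psi(k)=0\).} Pick \(u=\partial_i\) with \([u,k]=0\). Applying the identity to \(u,k,Y\) for \(Y\in L_1\), Lemma~\ref{lem:S-L1-generates-L0} shows \([L_0,\psi(k)]=0\). Since \(V\) has no nonzero \(\mathfrak{sl}_m\)-invariant vector, already witnessed by the \(x_a\partial_b\) with \(a\neq b\), we conclude \(\psi(k)=0\).
\end{itemize}

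\emph{Step 4: conclusion.} We have \([V,L_0]=V\), and the elements \(x_a\partial_b\) with \(a\neq b\) lie in \([L_0,L_0]\) and have no common nonzero fixed vector in \(V\). Therefore \(\Cent_G(V)\cap\Cent_G([L_0,L_0])=V\cap\Cent_G([L_0,L_0])=0\). Lemma~\ref{lem:propagation}, applied in \(G\) with \(A=L_0\), gives \(\psi=0\).

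The main obstacle is the index bookkeeping in Step~3. We need an index \(i\) such that \(\partial_i\) commutes with \(k\), and then, for \emph{every} \(b\), some \(h\in L_0\) with \([\partial_i,h]=\partial_b\). When \(m=3\) and \(k=x_c\partial_d\), the only admissible \(i\) is the third index, since \([x_c\partial_d,\partial_i]=-\delta_{ic}\partial_d\) forces only \(i\neq c\). So \(i=d\) is also allowed, which gives enough freedom. For \(b=i\) we use \(h=x_i\partial_i-x_t\partial_t\) with \(t\neq i\), for which \([\partial_i,h]=\partial_i\). For the diagonal elements \(k=x_c\partial_c-x_d\partial_d\), any \(i\notin\{c,d\}\) works, and such an \(i\) exists because \(m\geqslant3\). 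I would check these small cases explicitly; everything else transfers directly from the Witt argument.
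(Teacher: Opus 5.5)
Your proposal is correct and follows the paper's proof step for step: $V$ is preserved via $x_i\partial_b$ with $i\notin\{j,b\}$; the diagonal and off-diagonal entries are tested with $x_ix_j\partial_b$ and $x_i^{(2)}\partial_b$; vanishing on $L_0$ comes from Lemma~\ref{lem:S-centralizer} together with Lemma~\ref{lem:S-L1-generates-L0}; and Lemma~\ref{lem:propagation} is applied in $G$ with $A=L_0$. The only blemish is the self-contradictory sentence in your last paragraph, since it is not true that only the third index is admissible when $m=3$; the bookkeeping you eventually settle on is exactly the paper's: for $k=x_c\partial_d$ any $i\neq c$ works, and for $b=i$ one takes $h=x_i\partial_i-x_t\partial_t$ with $t\neq i$.
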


\begin{proof}
Let \(L=S(m;\uln)^{(1)}\) and \(V=L_{-1}\), and take
\(\varphi\in\HomLie(G)\).

\smallskip\noindent\textbf{Preservation of \(V\).}
Fix indices \(j\) and \(b\).
Choose \(i\notin\{j,b\}\), which is possible because \(m\geqslant 3\), and let
\(X=x_i\partial_b\in L_0\).  Then \(X\) is divergence-free,
\([X,\partial_i]=-\partial_b\), and \([\partial_j,X]=0\).  Applying the
Hom--Jacobi identity to \(\partial_i,\partial_j,X\), and using
\([\partial_i,\partial_j]=0\), gives
\([-\partial_b,\varphi(\partial_j)]=0\).
Since \(b\) and \(j\) are arbitrary, \(\varphi(\partial_j)\) centralises all of
\(V\).  By Lemma~\ref{lem:S-centralizer}, \(\varphi(\partial_j)\in V\), and hence
\(\varphi(V)\subseteq V\).

\smallskip\noindent\textbf{Scalar action on \(V\).}
Write
\(\varphi(\partial_j)=\sum_r a_{rj}\partial_r\).  Let \(i\neq j\), and choose
\(b\notin\{i,j\}\).  The element \(X=x_ix_j\partial_b\) is divergence-free and
lies in \(L_1\).  Moreover, \([X,\partial_i]=-x_j\partial_b\) and
\([\partial_j,X]=x_i\partial_b\).  The Hom--Jacobi identity for
\(\partial_i,\partial_j,X\) gives
\[
\begin{split}
    0
    &=
    [-x_j\partial_b,\varphi(\partial_j)]
    +
    [x_i\partial_b,\varphi(\partial_i)]  
    =
    (a_{jj}-a_{ii})\partial_b .
\end{split}
\]
Thus all diagonal coefficients \(a_{ii}\) are equal.

To show that the off-diagonal coefficients vanish, take
\(X=x_i^{(2)}\partial_b\), with
\(b\neq i\).  This element is divergence-free, \([X,\partial_i]=-x_i\partial_b\),
and \([\partial_j,X]=0\).  Hence the Hom--Jacobi identity gives
\(0=[-x_i\partial_b,\varphi(\partial_j)]=a_{ij}\partial_b\).
Therefore \(a_{ij}=0\) whenever \(i\neq j\), and so
\(\varphi|_V=\lambda\id_V\) for some \(\lambda\in\K\).

Let \(\psi=\varphi-\lambda\id_G\).  Then \(\psi\in\HomLie(G)\) and
\(\psi(V)=0\).  We shall prove that \(\psi=0\).

\smallskip\noindent\textbf{Vanishing on \(L_0\).}
It is enough to consider the
standard spanning set consisting of the off-diagonal elements \(x_c\partial_d\),
\(c\neq d\), and the diagonal differences \(x_c\partial_c-x_d\partial_d\),
\(c\neq d\).

Let \(k\) be one of these generators.  We first prove that \(\psi(k)\in V\).
Fix \(b\).  Choose \(i\) such that \([k,\partial_i]=0\): for
\(k=x_c\partial_d\) with \(c\neq d\), take \(i\neq c\); for
\(k=x_c\partial_c-x_d\partial_d\), take \(i\notin\{c,d\}\).  This is possible
because \(m\geqslant 3\).  Choose \(h\in L_0\) with \([\partial_i,h]=\partial_b\):
if \(b\neq i\), take \(h=x_i\partial_b\), while if \(b=i\), take
\(h=x_i\partial_i-x_t\partial_t\) with \(t\neq i\).  Applying the Hom--Jacobi
identity to \(\partial_i,h,k\), and using \(\psi(\partial_i)=0\) and
\([k,\partial_i]=0\), gives \([\partial_b,\psi(k)]=0\).  Since \(b\) is arbitrary,
Lemma~\ref{lem:S-centralizer} gives \(\psi(k)\in V\).

We now prove that this vector is zero.  Choose \(u=\partial_i\in V\) with
\([u,k]=0\), as above.  For \(Y\in L_1\), the Hom--Jacobi identity applied to
\(u,k,Y\) gives
\([[Y,u],\psi(k)]=0\),
because \(\psi(u)=0\) and \([u,k]=0\).  By
Lemma~\ref{lem:S-L1-generates-L0}, the elements \([Y,u]\), with \(Y\in L_1\),
span \(L_0\).  Hence \([L_0,\psi(k)]=0\).  But \(V\) has no non-zero
\(L_0\)-fixed vector.  Indeed, if \(v=\sum_r\mu_r\partial_r\in V\) and
\([x_a\partial_b,v]=0\) for all \(a\neq b\), then
\([x_a\partial_b,v]=-\mu_a\partial_b\) forces every \(\mu_a\) to vanish.  Thus
\(\psi(k)=0\).  Since the chosen generators span \(L_0\), we have
\(\psi(L_0)=0\).

Finally, \([V,L_0]=V\), Lemma~\ref{lem:S-centralizer} gives
\(\Cent_G(V)=V\), and \(L_0=[L_0,L_0]\).  The natural \(L_0\)-module \(V\)
has no non-zero fixed vector, as the off-diagonal elements already show.
Lemma~\ref{lem:propagation}, applied in \(G\) with \(A=L_0\), gives
\(\psi=0\).  Therefore \(\varphi=\lambda\id_G\).
\end{proof}

\subsection{Hamiltonian algebras}

Let \(\K\) be algebraically closed of characteristic \(p>3\), let \(r\geqslant1\),
and let
\(\uln=(n_1,\ldots,n_{2r})\in\mathbb Z_{>0}^{2r}\).  Write
\(\OO=\OO(2r;\uln)\), as defined in Section~\ref{sec:witt}, and write its
variables as symplectic pairs
\(q_1,\ldots,q_r,p_1,\ldots,p_r\).  For \(f,g\in\OO\), define the
\emph{Poisson bracket}
\[
    \{f,g\}
    =
    \sum_{i=1}^r
    \left(
        \partial_{q_i}(f)\partial_{p_i}(g)
        -
        \partial_{p_i}(f)\partial_{q_i}(g)
    \right).
\]
In particular,
\(\{q_i,p_j\}=\delta_{ij}\) and all brackets between two \(q\)'s or two
\(p\)'s vanish.  The \emph{Hamiltonian derivation} associated with \(f\) is
\[
    D_H(f)
    =
    \sum_{i=1}^r
    \left(
        \partial_{q_i}(f)\partial_{p_i}
        -
        \partial_{p_i}(f)\partial_{q_i}
    \right).
\]
It satisfies \(D_H(f)(g)=\{f,g\}\) and
\([D_H(f),D_H(g)]=D_H(\{f,g\})\).  The linear map
\(D_H\colon\OO\to W(2r;\uln)\), \(f\mapsto D_H(f)\), has kernel \(\K\).

To define the ambient algebra, let
\(\omega_H=\sum_{i=1}^r dq_i\wedge dp_i\) be the standard symplectic
differential form.  A special derivation \(D\) acts on divided-power
differential forms by the Lie derivative \(\mathcal L_D\), characterised by
\(\mathcal L_D(f)=D(f)\), \(\mathcal L_D(df)=d(D(f))\), and the graded
Leibniz rule.  The \emph{full Hamiltonian algebra} is
\[
    H(2r;\uln)
    =
    \{D\in W(2r;\uln)\mid \mathcal L_D(\omega_H)=0\}.
\]
Equivalently, this condition gives a purely algebraic definition.  Write
\(D=\sum_{i=1}^r(a_i\partial_{q_i}+b_i\partial_{p_i})\).  Then
\(D\in H(2r;\uln)\) if and only if, for all \(1\leqslant i,j\leqslant r\),
\begin{equation}\label{eq:full-H-coordinate-conditions}
\begin{aligned}
    \partial_{q_j}(b_i)=\partial_{q_i}(b_j),\quad   
    \partial_{p_j}(a_i)=\partial_{p_i}(a_j),\quad
    \partial_{q_j}(a_i)+\partial_{p_i}(b_j) =0.
\end{aligned}
\end{equation}
Equivalently, \eqref{eq:full-H-coordinate-conditions} gives an entirely
algebraic description of the full Hamiltonian algebra.
It is graded by
\(H(2r;\uln)_d=H(2r;\uln)\cap W(2r;\uln)_d\); for a homogeneous Hamiltonian
function \(f\), one has \(\deg D_H(f)=\deg(f)-2\).
Every \(D_H(f)\) belongs to \(H(2r;\uln)\), although the image of \(D_H\)
need not be the full Hamiltonian algebra.  If
\(\tau=(p^{n_1}-1,\ldots,p^{n_{2r}}-1)\), then its simple second derived
algebra is
\[
    L=H(2r;\uln)^{(2)}
     =\Span_{\K}\{D_H(x^{(\alpha)})\mid
          \alpha\neq0,\ \alpha\neq\tau\}.
\]
See
\cite[Chapter~4, Section~4.2, especially (4.2.4)--(4.2.5)]{StradeI}.

We identify \(D_H(f)\) with its Hamiltonian function modulo constants whenever
that function represents an element of \(L\).  Thus an expression such as
\(\varphi(q_i)\) means the image of the Hamiltonian class represented by
\(q_i\).  The standard grading is given by
\(\deg D_H(x^{(\alpha)})=|\alpha|-2\), and its low-degree components are
\[
\begin{aligned}
    V=L_{-1}
      &=\Span_{\K}\{\partial_{q_i},\partial_{p_i}\mid1\leqslant i\leqslant r\}
        \simeq\OO_1,\\
    Q=L_0&\simeq\OO_2\simeq\mathfrak{sp}(V),
    & L_1&\simeq\OO_3 .
\end{aligned}
\]
Here \(\OO_d\) denotes the total-degree-\(d\) component of the divided-power
algebra, and the map \(\ell\mapsto D_H(\ell)\) identifies \(\OO_1\) with
\(V\).  Through this identification, let
\(\omega_V(u,v)=\{u,v\}\) for \(u,v\in V\), and define
\[
    \mathfrak{sp}(V)
    =
    \{A\in\End_{\K}(V)\mid
       \omega_V(Au,v)+\omega_V(u,Av)=0
       \text{ for all }u,v\in V\}.
\]
The displayed isomorphism \(Q\simeq\mathfrak{sp}(V)\) sends a quadratic
Hamiltonian \(q\) to the endomorphism \(u\mapsto\{q,u\}\).

The excluded top monomial has Hamiltonian degree
\(|\tau|-2=\sum_{i=1}^{2r}(p^{n_i}-1)-2\geqslant6\).  Consequently every
Hamiltonian function of total degree \(1\), \(2\), or \(3\) represents an
element of \(H(2r;\uln)^{(2)}\).  Multiplication induces the
usual polarisation identifications between \(\OO_d\) and \(S^d(V)\) for
\(d=1,2,3\): explicitly, the divided-power monomial \(x^{(\alpha)}\) is
identified with \(x^\alpha/\alpha!\), where
\(\alpha!=\alpha_1!\cdots\alpha_{2r}!\) is invertible in \(\K\), since each
\(\alpha_i\leqslant|\alpha|\leqslant3<p\).  We retain divided-power notation throughout.
For later use, write the coordinate vector as
\(\mathbf z=(z_1,\ldots,z_{2r})=(q_1,\ldots,q_r,p_1,\ldots,p_r)\).  If
\(\ell=\sum_{a=1}^{2r}c_a z_a\in V\), its \emph{divided square} is
\[
    \ell^{(2)}
    =
    \sum_{a=1}^{2r}c_a^2z_a^{(2)}
    +
    \sum_{1\leqslant a<b\leqslant2r}c_ac_bz_az_b.
\]
Equivalently, \(\ell^2=2\ell^{(2)}\).  The proof below uses only these
low-degree components.

\begin{lemma}\label{lem:H-centralizer}
Let \(\K\) be an algebraically closed field of characteristic \(p>3\), let
\(r\geqslant1\), and let
\(\uln=(n_1,\ldots,n_{2r})\in\mathbb Z_{>0}^{2r}\).  Let
\(V=\Span_{\K}\{\partial_{q_i},\partial_{p_i}\mid1\leqslant i\leqslant r\}\).
Let \(G\) be a Lie subalgebra satisfying
\(H(2r;\uln)^{(2)}\subseteq G\subseteq H(2r;\uln)\).  Then
\[
    \Cent_G(V)=V .
\]
\end{lemma}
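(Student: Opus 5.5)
The plan is to copy the arguments of Lemma~\ref{lem:W-centralizer} and Lemma~\ref{lem:S-centralizer}, computing the centraliser inside the ambient Witt algebra \(W(2r;\uln)\). Since \(G\subseteq H(2r;\uln)\subseteq W(2r;\uln)\), any \(D\in\Cent_G(V)\) is in particular a special derivation commuting with every \(\partial_{q_i}\) and \(\partial_{p_i}\).

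First I check that \(V\subseteq G\). The elements \(\partial_{q_i},\partial_{p_i}\) are, up to sign, the Hamiltonian derivations \(D_H(p_i)\) and \(D_H(q_i)\). These have total degree \(1\), and by the discussion above every Hamiltonian function of degree \(1\) represents an element of \(H(2r;\uln)^{(2)}\subseteq G\). The \(\partial\)'s commute pairwise, so \(V\subseteq\Cent_G(V)\).

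For the reverse inclusion, write \(D=\sum_{s=1}^{2r}f_s\partial_{z_s}\) with \(f_s\in\OO(2r;\uln)\). For each coordinate \(z_a\), the bracket formula for \(W(2r;\uln)\) gives
\[
    0=[\partial_{z_a},D]=\sum_s\partial_{z_a}(f_s)\partial_{z_s}.
\]
Hence every coefficient \(f_s\) is annihilated by all the \(\partial_{z_a}\). The only simultaneous constants of the divided-power partial derivatives are the scalars: the maps \(\partial_{z_a}\) lower the exponent of \(x^{(\alpha)}\) in the \(a\)-th slot, so a nonconstant basis monomial in \(f_s\) of maximal total degree would survive some \(\partial_{z_a}\). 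Therefore each \(f_s\in\K\), and so \(D\in V\).

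No step is a real obstacle. The only point needing care is the containment \(V\subseteq H(2r;\uln)^{(2)}\), and this is already supplied by the description of \(L\) as spanned by the \(D_H(x^{(\alpha)})\) with \(\alpha\neq0,\tau\).
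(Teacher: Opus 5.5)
Your proposal is correct and is essentially the paper's argument: compute the centraliser inside \(W(2r;\uln)\), observe that the only common constants of the divided-power partial derivatives are scalars, and use that \(V\) is abelian and contained in \(H(2r;\uln)^{(2)}\subseteq G\). Your check that \(\partial_{q_i}=D_H(q_i)\) and \(\partial_{p_i}=-D_H(p_i)\) have degree-one Hamiltonians makes explicit the containment that the paper simply asserts.
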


\begin{proof}
Since \(G\subseteq W(2r;\uln)\), write $ D=\sum_{i=1}^r
      \bigl(f_i\partial_{q_i}+g_i\partial_{p_i}\bigr)\in G $.
If \(D\) centralises \(V\), then every divided-power partial derivative of
every \(f_i\) and every \(g_i\) vanishes.  Hence all these coefficients are
scalars and \(D\in V\).  Conversely, \(V\) is abelian and is contained in
\(H(2r;\uln)^{(2)}\subseteq G\).  Thus \(\Cent_G(V)=V\).
\end{proof}

We shall use the following elementary low-degree facts several times.

\begin{lemma}\label{lem:H-low-degree}
Let \(\K\) be an algebraically closed field of characteristic \(p>3\), let
\(r\geqslant1\), and let
\(\uln=(n_1,\ldots,n_{2r})\in\mathbb Z_{>0}^{2r}\).  Let
\(L=H(2r;\uln)^{(2)}\), \(V=L_{-1}\), and \(Q=L_0\), and write
\(\OO_3\) for the total-degree-three component in the Hamiltonian function
realisation.  Using \(D_H\colon\OO_1\to V\), identify \(V\) with the space of linear
Hamiltonians.  If \(0\neq u\in V\), then the linear maps
\[
    Q\longrightarrow V,\quad q\longmapsto\{q,u\},
    \qquad
    \OO_3\longrightarrow Q,\quad f\longmapsto\{f,u\},
\]
are surjective.
Moreover, the divided squares \(\ell^{(2)}\), with \(\ell\in V\), span
\(Q\simeq S^2V\), each satisfies \(\{\ell,\ell^{(2)}\}=0\), and
\([Q,Q]=Q\).
\end{lemma}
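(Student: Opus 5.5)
The plan is to work entirely in the Hamiltonian-function realisation, where all the brackets involved are Poisson brackets of homogeneous polynomials of degree at most \(3\). The top-monomial remark preceding the lemma shows that every function of degree \(1\), \(2\) or \(3\) represents an element of \(L\), so no truncation issues arise. Throughout, identify \(\OO_d\) with \(S^d(V)\) by polarisation; this is legitimate because \(d\leqslant3<p\). Fix \(0\neq u\in V\). Because the Poisson pairing \(\omega_V\) on \(\OO_1\) is non-degenerate (\(\{q_i,p_j\}=\delta_{ij}\)), choose \(w\in V\) with \(\{u,w\}=1\), and extend \(u,w\) to a symplectic (Darboux) basis of \(V\) by a linear symplectic change of coordinates. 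In degrees \(\leqslant3\) such a change preserves Poisson brackets, being an automorphism of the polynomial Poisson algebra in low degree. We may therefore assume that \(u=q_1\), up to sign or up to \(p_1\).

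\textbf{Surjectivity of both maps.} With \(u=q_1\) one has \(\{f,q_1\}=-\partial_{p_1}(f)\). Since \(\partial_{p_1}\) lowers divided-power exponents and \(\partial_{p_1}(x^{(\alpha+\mathbf e_{p_1})})=x^{(\alpha)}\), every monomial of degree \(d-1\) is hit by a monomial of degree \(d\). This proves surjectivity for \(d=2\) (\(Q\to V\)) and \(d=3\) (\(\OO_3\to Q\)). The only things to check are that the exponents stay below \(p^{n_{p_1}}\), which holds because they are at most \(3<p\), and the identification of \(L_0\) with \(\OO_2\) and of \(L_1\) with \(\OO_3\) recorded above. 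I expect the main obstacle to be justifying the coordinate change cleanly in the divided-power setting. If that proves awkward, I will instead argue directly: write \(u=\sum c_az_a\) and observe that \(\{\cdot,u\}\) acts as the constant-coefficient derivation \(\sum_a c_a\{\cdot,z_a\}\), which is a non-zero linear combination of \(\pm\partial_{z_b}\). A non-zero constant-coefficient derivation maps \(S^d\) onto \(S^{d-1}\) in characteristic \(p>d\), since after a linear change of variables (now only in the polynomial algebra \(S(V)\)) it becomes a single partial derivative.

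\textbf{Divided squares.} By definition \(\ell^2=2\ell^{(2)}\), so \(\{\ell,\ell^{(2)}\}=\tfrac12\{\ell,\ell^2\}=\ell\{\ell,\ell\}=0\), using the Leibniz rule for the Poisson bracket in low degree and antisymmetry. For the spanning claim, \(z_a^{(2)}=(z_a)^{(2)}\), and for \(a<b\) one has \(z_az_b=(z_a+z_b)^{(2)}-z_a^{(2)}-z_b^{(2)}\). Hence all basis monomials of \(\OO_2\) lie in the span of the divided squares.

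\textbf{Perfectness of \(Q\).} Use \(Q\simeq\mathfrak{sp}(V)\), which is simple of type \(C_r\) (with \(\mathfrak{sp}_2=\mathfrak{sl}_2\)) because \(p>3\). It is enough to exhibit brackets directly:
\[
\{q_i^{(2)},p_i^{(2)}\}=q_ip_i,\qquad \{q_ip_i,q_i^{(2)}\}=-2q_i^{(2)},\qquad \{q_ip_i,p_i^{(2)}\}=2p_i^{(2)}.
\]
For \(i\neq j\), the mixed monomials arise in the same way: for example \(\{q_ip_i,q_iq_j\}=-q_iq_j\) and \(\{q_ip_i,p_iq_j\}=p_iq_j\). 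These brackets require dividing only by \(2\), which is invertible, so \([Q,Q]=Q\).
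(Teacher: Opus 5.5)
Your proof is correct and follows essentially the paper's route. The paper also takes \(v\) with \(\{v,u\}=1\) and the complement \(V_0=\ker\{\,\cdot\,,u\}\). It then writes explicit preimages \(vw_0+c\,v^{(2)}\) and \(vq_0+v^{(2)}w+c\,v^{(3)}\) directly in divided-power notation, which sidesteps the coordinate-change issue you flagged. Your fallback, reducing the constant-coefficient derivation \(\{\,\cdot\,,u\}\) to a single partial derivative in \(S(V)\) (valid since \(d\leqslant3<p\)), reaches the same conclusion.

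For the other claims, your polarisation identity \(z_az_b=(z_a+z_b)^{(2)}-z_a^{(2)}-z_b^{(2)}\) and your weight computation with \(\{q_ip_i,\,\cdot\,\}\) spell out what the paper gets by citing \(p\neq2\) and the perfectness of \(\mathfrak{sp}(V)\).
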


\begin{proof}
Choose \(v\in V\) such that \(\{v,u\}=1\).  We first prove that the map from
\(Q\) to \(V\) is surjective.  Given \(w\in V\), let \(c=\{w,u\}\) and
\(w_0=w-cv\).  Then \(\{w_0,u\}=0\).  Therefore
\[
    \{vw_0+c\,v^{(2)},u\}=w_0+cv=w .
\]
Thus every \(w\in V\) lies in the image.

To prove the second surjectivity assertion, let
\(V_0=\ker\{\,\cdot\,,u\}\), so that \(V=\K v\oplus V_0\).  Every
\(q\in Q\simeq S^2V\) has a unique expression
\[
    q=q_0+vw+c\,v^{(2)}
    \qquad(q_0\in S^2V_0,\ w\in V_0,\ c\in\K).
\]
Since \(\{v,u\}=1\) and \(\{V_0,u\}=0\), the cubic
\(vq_0+v^{(2)}w+c\,v^{(3)}\) satisfies
\[
    \{vq_0+v^{(2)}w+c\,v^{(3)},u\}=q.
\]
Thus \(\{\,\cdot\,,u\}\) maps \(\OO_3\) onto \(Q\).

Since \(p\neq2\), the quadratic component \(Q\simeq S^2V\) is spanned by the
divided squares \(\ell^{(2)}\), and
\(\{\ell,\ell^{(2)}\}=0\).  The final assertion follows from the identification
\(Q\simeq\mathfrak{sp}(V)\), since \(\mathfrak{sp}(V)\) is perfect for \(p>3\).
\end{proof}

\begin{theorem}\label{thm:H}
Let \(\K\) be an algebraically closed field of characteristic \(p>3\), let
\(r\geqslant1\), and let
\(\uln=(n_1,\ldots,n_{2r})\in\mathbb Z_{>0}^{2r}\).  If
\[
 H(2r;\uln)^{(2)}\subseteq G\subseteq H(2r;\uln),
\]
then
\[
 \HomLie(G)=\K\id_G.
\]
\end{theorem}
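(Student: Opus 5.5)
The argument runs in parallel with Theorems~\ref{thm:W} and~\ref{thm:S}, but it is phrased in Hamiltonian functions and uses the low-degree facts of Lemma~\ref{lem:H-low-degree}. Let \(L=H(2r;\uln)^{(2)}\), \(V=L_{-1}\), \(Q=L_0\), and take \(\varphi\in\HomLie(G)\).

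\textbf{Step 1: \(\varphi(V)\subseteq V\).} Fix \(u\in V\). For \(\ell\in V\) with \(\{\ell,u\}=0\), apply the Hom--Jacobi identity to \(\ell,u,q\) with \(q\in Q\) chosen so that \(\{q,\ell\}=0\). The first term vanishes because \([\ell,u]=0\), so the identity reduces to \([\{q,u\},\varphi(\ell)]=0\). When \(r\geqslant2\), or more generally by suitable choices of \(q\), the elements \(\{q,u\}\) with \(q\) ranging over the stabiliser of \(\ell\) in \(Q\) should span \(V\). A cleaner option is \(u=\ell\) itself together with \(q=\ell^{(2)}\) and varying \(\ell\). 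The goal is to show that \(\varphi(\ell)\) centralises \(V\), and then Lemma~\ref{lem:H-centralizer} gives \(\varphi(\ell)\in V\).

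In the case \(r=1\), \(\ell\) and \(u\) are proportional, so I would instead use the triple \((\ell,\ell^{(2)},x)\) with \(x\in Q\). Since \(\{\ell,\ell^{(2)}\}=0\), this gives \([\{x,\ell\},\varphi(\ell^{(2)})]=[\{\ell^{(2)},x\},\varphi(\ell)]\). Combining it with the triple \((\ell,\ell,\cdot)\) trivialities, and with cubic elements \(f\in\OO_3\) satisfying \(\{f,\ell\}=0\), should do the job. I expect this low-rank case to be the main obstacle, since \(\mathfrak{sp}_2\) has little room. If it resists, I would fall back on weight vectors: \(\varphi\) can be decomposed under the torus spanned by \(q_ip_i\) via Lemma~\ref{lem:HomLie-module}, and each weight component can then be treated as in Theorem~\ref{thm:classical}.

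\textbf{Step 2: \(\varphi|_V=\lambda\id_V\).} Apply the identity to \(u,v\in V\) and a cubic \(f\in\OO_3\). Writing \(\{u,v\}=c\), the first term is \([c,\varphi(f)]=0\) because constants are zero in \(L\). What remains is \([\{f,u\},\varphi(v)]=[\{f,v\},\varphi(u)]\), which is an identity in \(V\). Choosing \(f=\ell^{(3)}\) makes the brackets explicit, and one should get that \(\{\varphi(v),\cdot\}\) and \(\{\cdot,\varphi(u)\}\) intertwine in a way that forces \(\varphi\) to commute with all of \(\mathfrak{sp}(V)\). Since \(V\) is an absolutely irreducible \(\mathfrak{sp}(V)\)-module, \(\varphi|_V\) is then scalar, equal to \(\lambda\) say.

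\textbf{Step 3: vanishing on \(Q\) and conclusion.} Put \(\psi=\varphi-\lambda\id_G\), so that \(\psi(V)=0\). For \(k=\ell^{(2)}\), which spans \(Q\) by Lemma~\ref{lem:H-low-degree}, take \(u=\ell\), so that \([u,k]=0\). The identity on \(u,h,k\) with \(h\in Q\) gives \([\{h,\ell\},\psi(k)]=0\), and the first surjectivity in Lemma~\ref{lem:H-low-degree} shows that \(\psi(k)\in\Cent_G(V)=V\). The identity on \(u,k,Y\) with \(Y\in\OO_3\) gives \([\{Y,u\},\psi(k)]=0\), and by the second surjectivity this says \([Q,\psi(k)]=0\). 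Since \(V\) has no nonzero \(\mathfrak{sp}(V)\)-invariants, \(\psi(k)=0\), so \(\psi(Q)=0\). Finally, \([V,Q]=V\), \([Q,Q]=Q\), and \(\Cent_G(V)\cap\Cent_G(Q)=0\), so Lemma~\ref{lem:propagation} with \(A=Q\) gives \(\psi=0\).
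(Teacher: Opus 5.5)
Your Step~1 has a genuine gap, and it is not confined to \(r=1\). The claim that the elements \(\{q,u\}\), with \(q\in Q\) stabilising \(\ell\), span \(V\) is false for every \(r\). If \(\{q,\ell\}=0\), then for any \(u\in V\) one has \(\{\{q,u\},\ell\}=\{q,\{u,\ell\}\}+\{\{q,\ell\},u\}=0\), because \(\{u,\ell\}\) is a constant. Thus \(\{q,u\}\) always lies in the hyperplane \(\ell^\perp=\{w\in V\mid\{w,\ell\}=0\}\); equivalently, an element of \(\mathfrak{sp}(V)\) killing \(\ell\) maps \(V\) into \(\ell^\perp\). Dropping your restriction \(\{\ell,u\}=0\) does not help; it is unnecessary anyway, since \([\ell,u]=0\) in \(G\) for all \(u\in V\). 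The ``cleaner option'' \(u=\ell\) gives nothing, because \(J_\varphi\) is alternating. So this identity only shows that \(\varphi(\ell)\) centralises \(\ell^\perp\). For \(\ell=q_i\), this says the coefficients of \(\varphi(q_i)\) depend on \(q_i\) alone. The coordinate conditions~\eqref{eq:full-H-coordinate-conditions} then leave \(\varphi(q_i)=v_i+B_i(q_i)\partial_{p_i}\), with \(v_i\in V\) and \(B_i\) an arbitrary divided-power function of \(q_i\). Nothing in your plan excludes non-constant \(B_i\). In fact, by the coordinate conditions every element of \(\Cent_G(q_i)\) has constant \(\partial_{q_i}\)-coefficient. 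Hence \(B_i(q_i)\partial_{p_i}\) commutes with \([y,z]\) for all \(y,z\in\Cent_G(q_i)\), and no identity that isolates the single term \([[y,z],\varphi(q_i)]\) can detect it. The torus fallback does not close the gap either. The eigenvalues of \(D_H(q_ip_i)\) lie in \(\mathbb F_p\) and the weight spaces of \(G\) are far from one-dimensional, so the root-space arguments of Theorem~\ref{thm:classical} do not transfer.

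The missing idea is an identity in which two unknown terms survive and can be compared. The paper obtains, as above, \(\varphi(q_i)=v_i+B_i(q_i)\partial_{p_i}\) and symmetrically \(\varphi(p_i)=w_i+A_i(p_i)\partial_{q_i}\). It then applies the Hom--Jacobi identity to \(q_i,p_i,q_ip_i\). Here \([q_i,p_i]=0\), while \([q_ip_i,q_i]=-\partial_{p_i}\) and \([p_i,q_ip_i]=\partial_{q_i}\), so the identity becomes \(B_i'(q_i)\partial_{p_i}-A_i'(p_i)\partial_{q_i}=0\). The kernel of a one-variable divided-power derivative is \(\K\), so \(A_i\) and \(B_i\) are constant and \(\varphi(V)\subseteq V\). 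This works uniformly for \(r\geqslant1\), so \(r=1\) is no special obstacle.

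Your Step~2 tacitly uses \(\varphi(V)\subseteq V\) when it calls the identity one in \(V\), and its intertwining claim is only asserted. Once Step~1 is repaired, it can be completed directly. Write \(A=\varphi|_V\) and take \(f=\ell^{(3)}\). Then \(\{f,u\}=\{\ell,u\}\ell^{(2)}\) and \(\{\ell^{(2)},w\}=\{\ell,w\}\ell\), so the identity becomes \(\{\ell,u\}\{\ell,Av\}=\{\ell,v\}\{\ell,Au\}\). Choosing \(\{\ell,v\}=0\neq\{\ell,u\}\) shows that \(A\) preserves every hyperplane \(\ell^\perp\). Every hyperplane has this form and \(\dim V\geqslant2\), so \(A\) is scalar. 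This is a shorter substitute for the paper's trilinear-form argument. Your Step~3 and the final use of Lemma~\ref{lem:propagation} coincide with the paper's.
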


\begin{proof}
Let \(L=H(2r;\uln)^{(2)}\), \(V=L_{-1}\), and \(Q=L_0\), and take
\(\varphi\in\HomLie(G)\).

\smallskip\noindent\textbf{Preservation of \(V\).}
Fix \(i\), and write
\[
 \varphi(q_i)=
 \sum_{j=1}^r
 \bigl(a_j\partial_{q_j}+b_j\partial_{p_j}\bigr).
\]
If \(f\in\OO\) is independent of \(p_i\), the Hom--Jacobi identity for the
Hamiltonian derivations represented by \(p_i,q_i,f\) gives
\[
 [D_H(\partial_{q_i}f),\varphi(q_i)]=0.
\]
Taking \(f=q_i^{(2)},q_iq_j,q_ip_j\), with \(j\neq i\), shows that every
coefficient of \(\varphi(q_i)\) depends only on \(q_i\).  The coordinate
conditions \eqref{eq:full-H-coordinate-conditions} then give
\[
 \varphi(q_i)=v_i+B_i(q_i)\partial_{p_i}
 \qquad\text{for some }v_i\in V.
\]
Interchanging \(q_i\) and \(p_i\), we similarly obtain
\[
 \varphi(p_i)=w_i+A_i(p_i)\partial_{q_i}
 \qquad\text{for some }w_i\in V.
\]
The Hom--Jacobi identity for \(q_i,p_i,q_ip_i\) yields
\[
 -B_i'(q_i)\partial_{p_i}+A_i'(p_i)\partial_{q_i}=0.
\]
The kernel of either one-variable divided-power derivative is \(\K\).
Consequently \(A_i\) and \(B_i\) are constant, and
\(\varphi(V)\subseteq V\).

\smallskip\noindent\textbf{Scalar action on \(V\).}
Let \(A=\varphi|_V\).  For \(a,b\in V\) and \(f\in\OO_3\simeq L_1\), the
Hom--Jacobi identity gives
\begin{equation}\label{eq:H-cubic}
 \{\{f,a\},Ab\}=\{\{f,b\},Aa\}.
\end{equation}
For \(f\in\OO_3\), let
\[
 C_f(x,y,z)=\{\{\{f,x\},y\},z\}
 \qquad(x,y,z\in V).
\]
This is a symmetric trilinear form.  Choose linear coordinates
\(y_1,\ldots,y_{2r}\) and the ordered dual tuple
\(\mathbf y^\vee=(y_1^\vee,\ldots,y_{2r}^\vee)\), where
\(\{y_i,y_j^\vee\}=\delta_{ij}\).  If
\(f=\sum_{|\alpha|=3}c_\alpha y^{(\alpha)}\), then the values of \(C_f\) on
triples of elements of \(\mathbf y^\vee\) recover the coefficients \(c_\alpha\).
Thus \(f\mapsto C_f\) is injective.  Since
\[
 \dim\OO_3=\binom{2r+2}{3}=\dim S^3(V^*),
\]
it is an isomorphism from \(\OO_3\) onto \(S^3(V^*)\).

Since both sides of \eqref{eq:H-cubic} belong to \(\OO_1\) and \(D_H\) is injective on \(\OO_1\), this is an equality of linear Hamiltonian fuctions. Taking the Poisson bracket with \(t\in V\), we obtain \(C_f(a,Ab,t)=C_f(b,Aa,t)\). Then using the non-degenerate
pairing between \(S^3(V^*)\) and \(S^3V\), we obtain
\[
 (a\odot Ab-b\odot Aa)\odot t=0.
\]
Multiplication by a non-zero linear form is injective in \(S(V)\), so
\(a\odot Ab=b\odot Aa\).  If \(a,b\) are linearly independent, write
\[
 Aa=xa+yb+w,\qquad Ab=ra+sb+z,
\]
where \(w,z\) lie in a fixed complement of \(\Span_{\K}\{a,b\}\).
Comparison of components in the resulting direct-sum decomposition of
\(S^2V\) gives \(r=y=0\), \(w=z=0\), and \(s=x\).  Hence every
one-dimensional subspace of \(V\) is \(A\)-stable, and
\(A=\lambda\id_V\) for some \(\lambda\in\K\).

Let \(\psi=\varphi-\lambda\id_G\).  Then
\(\psi\in\HomLie(G)\) and \(\psi(V)=0\).

\smallskip\noindent\textbf{Vanishing on \(Q\).}
Fix \(0\neq\ell\in V\) and let \(q=\ell^{(2)}\).  Since
\([\ell,q]=0\), the Hom--Jacobi identity for \(\ell,q,k\), with \(k\in Q\),
gives
\[
 [[k,\ell],\psi(q)]=0.
\]
Lemma~\ref{lem:H-low-degree} shows that \([Q,\ell]=V\).  Hence
\(\psi(q)\in\Cent_G(V)=V\), by Lemma~\ref{lem:H-centralizer}.  Applying the
Hom--Jacobi identity to \(\ell,q,C\), with \(C\in L_1\), gives
\[
 [[C,\ell],\psi(q)]=0.
\]
Again by Lemma~\ref{lem:H-low-degree}, \([L_1,\ell]=Q\).  The natural
\(Q\simeq\mathfrak{sp}(V)\)-module \(V\) has no non-zero fixed vector, so
\(\psi(q)=0\).  The divided squares span \(Q\); hence \(\psi(Q)=0\).

Finally, \([V,Q]=V\), \(\Cent_G(V)=V\), and \([Q,Q]=Q\).  Since \(V\) has no
non-zero \(Q\)-fixed vector, Lemma~\ref{lem:propagation}, applied in \(G\)
with \(A=Q\), gives \(\psi=0\).  Therefore
\(\varphi=\lambda\id_G\).
\end{proof}

\subsection{Contact algebras}

Let \(\K\) be algebraically closed of characteristic \(p>3\), let \(r\geqslant1\),
and let
\(\uln=(n_1,\ldots,n_{2r+1})\in\mathbb Z_{>0}^{2r+1}\).  Write
\(\OO=\OO(2r+1;\uln)\).  We choose symplectic coordinates
\(q_1,\ldots,q_r,p_1,\ldots,p_r\) and a contact variable \(t\).  Let \(V\) be the
\(2r\)-dimensional vector space spanned by the linear symplectic variables, and
let \(\omega\) be the symplectic form on \(V\), normalised by
\[
    \omega(q_i,p_j)=\delta_{ij},
    \qquad
    \omega(q_i,q_j)=\omega(p_i,p_j)=0 .
\]

For \(f\in\OO\), define its \emph{contact derivation} by
\[
\begin{split}
    D_K(f)
    ={}&
    \sum_{i=1}^r
    \bigl(q_i\partial_t(f)-\partial_{p_i}(f)\bigr)\partial_{q_i}
    +
    \sum_{i=1}^r
    \bigl(p_i\partial_t(f)+\partial_{q_i}(f)\bigr)\partial_{p_i} \\
    &+
    \left(2f-
        \sum_{i=1}^r
        \bigl(q_i\partial_{q_i}(f)+p_i\partial_{p_i}(f)\bigr)
    \right)\partial_t .
\end{split}
\]
The map \(f\mapsto D_K(f)\) is injective, and the \emph{full contact algebra} is
\[
    K(2r+1;\uln)=\{D_K(f)\mid f\in\OO\}\subseteq W(2r+1;\uln).
\]
With our convention, its bracket is given by the \emph{contact bracket} of
Hamiltonians
\begin{equation}\label{eq:contact-bracket}
\begin{split}
    \langle f,g\rangle
    &=D_K(f)(g)-2g\partial_t(f)
    =\Delta(f)\partial_t(g)-\Delta(g)\partial_t(f)+\{f,g\}_0.
\end{split}
\end{equation}
Here \(\{-,-\}_0\) is the symplectic Poisson bracket in the variables
\(q_i,p_i\), namely
\[
    \{f,g\}_0
    =
    \sum_{i=1}^r
    \left(
        \partial_{q_i}(f)\partial_{p_i}(g)
        -
        \partial_{p_i}(f)\partial_{q_i}(g)
    \right),
\]
and
\[
    \Delta(f)
    =
    2f-\sum_{i=1}^r
    \bigl(q_i\partial_{q_i}(f)+p_i\partial_{p_i}(f)\bigr).
\]
Thus \([D_K(f),D_K(g)]=D_K(\langle f,g\rangle)\).  The simple contact algebra
used below is the first derived algebra \(L=K(2r+1;\uln)^{(1)}\).
If \(p\nmid(2r+4)\), then \(L=K(2r+1;\uln)\); if \(p\mid(2r+4)\), then \(L\)
has codimension one and is spanned by the \(D_K(x^{(\alpha)})\) with
\(\alpha\neq(p^{n_1}-1,\ldots,p^{n_{2r+1}}-1)\).  See
\cite[Chapter~4, Section~4.2, especially (4.2.7)--(4.2.10) and
(4.2.12)--(4.2.16)]{StradeI}.  In the calculations below we identify
\(D_K(f)\) with its Hamiltonian \(f\).

The standard contact grading assigns degree \(1\) to the variables in \(V\) and
degree \(2\) to \(t\), and then subtracts \(2\) on passing from a Hamiltonian
to its contact derivation.  Explicitly,
\[
    \deg D_K(x^{(\alpha)})
    =
    \alpha_1+\cdots+\alpha_{2r}+2\alpha_{2r+1}-2 .
\]
Thus
\[
    L=L_{-2}\oplus L_{-1}\oplus L_0\oplus L_1\oplus\cdots,
\]
where
\[
\begin{aligned}
    L_{-2}&=\K c,\qquad c=1,\\
    L_{-1}&=V,\\
    L_0&=\Span_{\K}\{uv\mid u,v\in V\}\oplus\K t
       \simeq \mathfrak{sp}(V)\oplus \K E,
       \qquad E=t .
\end{aligned}
\]
The quadratic Hamiltonians in the variables from \(V\), independent of \(t\),
form the displayed copy of \(\mathfrak{sp}(V)\): a quadratic Hamiltonian
\(f\) corresponds to the endomorphism \(u\mapsto\{f,u\}_0\) of \(V\).
Here
\[
    \mathfrak{sp}(V)
    =
    \{A\in\End_{\K}(V)\mid
      \omega(Au,v)+\omega(u,Av)=0
      \text{ for all }u,v\in V\}.
\]

If \(p\mid(2r+4)\), the only Hamiltonian omitted from
\(K(2r+1;\uln)^{(1)}\) is the top divided-power monomial.  Its contact degree is
\[
    \sum_{i=1}^{2r}(p^{n_i}-1)
    +2(p^{n_{2r+1}}-1)-2
    \geqslant14.
\]
Hence every Hamiltonian of contact degree at most \(2\) used below lies in the
derived algebra.  If \(p\nmid(2r+4)\), the same assertion is immediate because
the full contact algebra is already equal to its derived algebra.  Direct
substitution in \eqref{eq:contact-bracket} now gives the following low-degree
relations.  Here \(t^{(2)}\) is the second divided power of the contact
variable, characterised by \(t^2=2t^{(2)}\):
\begin{align}
    [c,u]&=0,\qquad [u,v]=\omega(u,v)c,
        && u,v\in V, \label{eq:K-cuv}\\
    [A,c]&=0,
        && A\in\mathfrak{sp}(V), \notag\\
    [E,c]&=-2c,\qquad [E,u]=-u,
        && u\in V, \notag\\
    [c,tv]&=2v,
        && v\in V, \label{eq:K-ctv}\\
    [u,tv]-\omega(u,v)E&\in\mathfrak{sp}(V),
        && u,v\in V, \label{eq:K-utv}\\
    [c,t^{(2)}]&=2E. \label{eq:K-ct2}
\end{align}

\begin{lemma}\label{lem:K-centralizer}
Let \(\K\) be an algebraically closed field of characteristic \(p>3\), let
\(r\geqslant1\), and let
\(\uln=(n_1,\ldots,n_{2r+1})\in\mathbb Z_{>0}^{2r+1}\).  Let
\(V=K(2r+1;\uln)^{(1)}_{-1}\) and \(c=D_K(1)\), represented by the contact
Hamiltonian \(1\) under the fixed identification; then
\(K(2r+1;\uln)^{(1)}_{-2}=\K c\).  Let \(G\) be a Lie subalgebra satisfying
\(K(2r+1;\uln)^{(1)}\subseteq G\subseteq K(2r+1;\uln)\).  Then
\[
    \Cent_G(V)=\K c .
\]
\end{lemma}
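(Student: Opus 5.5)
We compute the centraliser at the level of contact Hamiltonians, using the bracket~\eqref{eq:contact-bracket}. Since \(G\subseteq K(2r+1;\uln)=\{D_K(f)\}\), every element of \(G\) is \(D_K(f)\) for a unique \(f\in\OO\). Because \([D_K(f),D_K(g)]=D_K(\langle f,g\rangle)\) and \(D_K\) is injective, \(D_K(f)\) centralises \(V\) exactly when \(\langle u,f\rangle=0\) for every linear symplectic variable \(u\in\{q_1,\ldots,q_r,p_1,\ldots,p_r\}\).

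\textbf{Step 1: compute \(\langle u,f\rangle\).} For a linear \(u\) independent of \(t\) we have \(\partial_t(u)=0\) and \(\Delta(u)=2u-u=u\). Hence
\[
 \langle u,f\rangle=u\,\partial_t(f)+\{u,f\}_0 .
\]
This gives
\[
 \langle q_i,f\rangle=q_i\partial_t(f)+\partial_{p_i}(f),
 \qquad
 \langle p_i,f\rangle=p_i\partial_t(f)-\partial_{q_i}(f).
\]
We check the signs directly from the definition of \(\{-,-\}_0\). Centralising \(V\) therefore amounts to the system
\[
 \partial_{p_i}(f)=-q_i\partial_t(f),\qquad \partial_{q_i}(f)=p_i\partial_t(f)\qquad(1\leqslant i\leqslant r).
\]

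\textbf{Step 2: solve the system.} Both sides are preserved by the contact grading, since \(\partial_{p_i}\) lowers degree by \(1\) and \(q_i\partial_t\) lowers it by \(2-1=1\). So we may assume \(f\) is homogeneous of contact degree \(d\). Write \(f=\sum_k t^{(k)}f_k\) with \(f_k\) independent of \(t\). Then \(\partial_t f=\sum_k t^{(k)}f_{k+1}\), and the system becomes, coefficientwise in \(t^{(k)}\),
\[
 \partial_{p_i}f_k=-q_if_{k+1},\qquad \partial_{q_i}f_k=p_if_{k+1}.
\]
We argue by downward induction on \(k\). Take the largest \(k\) with \(f_k\neq0\). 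Then \(f_{k+1}=0\), so \(f_k\) is killed by all \(\partial_{q_i},\partial_{p_i}\) and is a scalar. If \(k\geqslant1\), the equation for \(f_{k-1}\) reads \(\partial_{p_i}f_{k-1}=-q_if_k\) and \(\partial_{q_i}f_{k-1}=p_if_k\), with \(f_k=c_0\neq0\). Applying \(\partial_{q_i}\) to the first and \(\partial_{p_i}\) to the second gives \(\partial_{q_i}\partial_{p_i}f_{k-1}=-c_0\) and \(\partial_{p_i}\partial_{q_i}f_{k-1}=c_0\). The special derivations commute, so \(2c_0=0\), a contradiction since \(p\neq2\). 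Hence \(k=0\) and \(f=f_0\in\K\). This shows \(\Cent_G(V)\subseteq\K c\).

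The conclusion also drops out of the following observation. Taking \(\partial_{q_i}\) of \(\partial_{p_i}(f)=-q_i\partial_tf\) and \(\partial_{p_i}\) of \(\partial_{q_i}(f)=p_i\partial_tf\), subtracting, and using commutativity of the \(\partial\)'s, gives
\[
 -\partial_tf-q_i\partial_{q_i}\partial_tf=\partial_tf+p_i\partial_{p_i}\partial_tf .
\]
From this one expects \(\partial_tf=0\) to follow by a degree or leading-term argument, and then \(f\) is constant. The induction above is the cleaner way to make this rigorous, and this is the step where some care is needed. Note that truncation causes no problems: in \(\OO\) the simultaneous kernel of all the \(\partial\)'s is \(\K\), exactly as in Lemma~\ref{lem:W-centralizer}.

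\textbf{Step 3: the reverse inclusion.} By~\eqref{eq:K-cuv}, \([c,u]=0\) for all \(u\in V\). Moreover \(c=D_K(1)\) lies in \(K(2r+1;\uln)^{(1)}\subseteq G\), since its degree is \(-2\), far below the degree of the possibly omitted top monomial. Hence \(\Cent_G(V)=\K c\).
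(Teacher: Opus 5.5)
Your proof is correct, and Steps 1 and 3 agree with the paper. The difference is in how you show that $\partial_t(f)=0$.

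\textbf{The paper's route.} It writes the system as $X_u(f)=0$ with $X_u=\{\,\cdot\,,u\}_0-u\partial_t$. Then $f$ is also killed by the commutator $[X_u,X_v]=2\omega(u,v)\partial_t$. Choosing $\omega(u,v)\neq0$ gives $\partial_t(f)=0$ in one stroke, with no expansion in $t$ and no use of the grading.

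\textbf{Your route.} You expand $f=\sum_k t^{(k)}f_k$ and apply the same clash of mixed partials, $\partial_{q_i}\partial_{p_i}$ against $\partial_{p_i}\partial_{q_i}$, to the top non-zero coefficient. This is more explicit but needs the coefficient bookkeeping. The reduction to homogeneous $f$ is never actually used, and the ``downward induction'' is really a single step.

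\textbf{Your heuristic observation closes at once.} Apply $\partial_t$ to your system; it commutes with $\partial_{q_i}$, $\partial_{p_i}$ and with multiplication by $q_i,p_i$. This gives $\partial_{q_i}\partial_t f=p_i\partial_t^2 f$ and $\partial_{p_i}\partial_t f=-q_i\partial_t^2 f$. Substituting into your displayed identity cancels the $q_ip_i\partial_t^2 f$ terms and leaves $2\partial_t f=0$. This is exactly the paper's commutator identity evaluated on the pair $(q_i,p_i)$, so you could have finished there without the $t$-expansion.
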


\begin{proof}
Let \(z\in G\) centralise \(V\), and represent \(z\) by a contact Hamiltonian
\(f\).  For \(u\in V\), the condition \([f,u]=0\) is, by
\eqref{eq:contact-bracket},
\[
    \{f,u\}_0-u\partial_t(f)=0 .
\]
For \(u\in V\), let
\(X_u=\{\,\cdot\,,u\}_0-u\partial_t\).  Thus \(X_u(f)=0\) for every
\(u\in V\).  For \(u,v\in V\), a direct calculation gives
\[
    [X_u,X_v]=2\omega(u,v)\partial_t .
\]
Since \(p>3\), the scalar \(2\) is invertible.  Choosing \(u,v\) with
\(\omega(u,v)\neq0\), we obtain \(\partial_t(f)=0\).  The equations
\(X_u(f)=0\) then reduce to \(\{f,u\}_0=0\) for all \(u\in V\).  Hence all
symplectic first derivatives of \(f\) vanish.  In the divided-power algebra this
forces \(f\in\K\).  Therefore \(z\in\K c\).

Conversely, \([c,u]=0\) for every \(u\in V\) by \eqref{eq:K-cuv}.  Hence
\(\Cent_G(V)=\K c\).
\end{proof}

\begin{theorem}\label{thm:K}
Let \(\K\) be an algebraically closed field of characteristic \(p>3\), let
\(r\geqslant1\), and let
\(\uln=(n_1,\ldots,n_{2r+1})\in\mathbb Z_{>0}^{2r+1}\).  If $K(2r+1;\uln)^{(1)}\subseteq G\subseteq K(2r+1;\uln)$, 
then
\[
    \HomLie(G)=\K\id_G .
\]
\end{theorem}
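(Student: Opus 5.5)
I will follow the same three-stage pattern as for the \(W\), \(S\) and \(H\) families. Fix \(\varphi\in\HomLie(G)\). First I pin down \(\varphi\) on the negative part \(L_{-2}\oplus L_{-1}=\K c\oplus V\) and subtract a scalar. Then I show that the difference \(\psi\) vanishes on \(L_0\). Finally I conclude with Lemma~\ref{lem:propagation}. The new feature compared with type \(H\) is that \(V\) is not abelian and \(\Cent_G(V)=\K c\) by Lemma~\ref{lem:K-centralizer}. So the arguments must go through \(c\) and the Euler element \(E=t\) rather than through \(V\) alone.

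\textbf{Step 1: values on \(c\) and \(V\).} For \(u,v\in V\) and \(A\in\mathfrak{sp}(V)\), the Hom--Jacobi identity on \((u,v,A)\) gives
\[
\omega(u,v)[c,\varphi(A)]+[[A,u],\varphi(v)]-[[A,v],\varphi(u)]=0 .
\]
Using \(u,A,c\) instead, with \([c,u]=[A,c]=0\), gives \([[u,A],\varphi(c)]=0\). Since \([\mathfrak{sp}(V),u]=V\) for \(u\neq0\), this puts \(\varphi(c)\in\Cent_G(V)=\K c\), say \(\varphi(c)=\lambda c\).

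Next apply the identity to \((c,u,tv)\). By \eqref{eq:K-ctv} and \eqref{eq:K-utv} it reads
\[
[[tv,c],\varphi(u)]+[[u,tv],\lambda c]=0,
\]
which simplifies to \(-2[v,\varphi(u)]+\lambda\,\omega(u,v)[E,c]=0\). By \eqref{eq:K-cuv}, the bracket \([v,\varphi(u)]\) is a multiple of \(c\) only if \(\varphi(u)\in V\oplus\K c\) modulo the centraliser. To force this, I would first use \((c,u,X)\) for \(X\in L_1\) with \([c,X]\in V\) to get \([[u,X],\varphi(c)]\)-type terms plus \([[X,c],\varphi(u)]\). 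Letting \(v=[X,c]\) range over \(V\) then gives \([v,\varphi(u)]\in\K c\) for all \(v\), hence \(\varphi(u)\in V+\Cent\)-type terms. Since \(\{x:[V,x]\subseteq\K c\}=V\oplus\K c\) by a computation as in Lemma~\ref{lem:K-centralizer}, this yields \(\varphi(u)\in V\oplus\K c\). A grading argument with \(E\), applying the identity to \((E,u,v)\) and \((E,c,u)\) using \([E,u]=-u\) and \([E,c]=-2c\), should kill the \(c\)-component. Once \(\varphi(u)\in V\), the displayed relation gives \(\omega(v,\varphi(u))=\lambda\,\omega(u,v)\), that is, \(\varphi|_V=\lambda\id_V\). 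Set \(\psi=\varphi-\lambda\id_G\), so that \(\psi(c)=0\) and \(\psi(V)=0\).

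\textbf{Step 2: vanishing on \(L_0\).} For \(k\in L_0\) and \(u\in V\), the identity on \((c,u,k)\) gives \([[k,c],\psi(u)]+\dots\). This reduces to \([\omega\text{-terms}\cdot c,\psi(k)]\) plus \([[u,k],\psi(c)]=0\). Choosing \(u,v\) with \(\omega(u,v)\neq0\), the identity on \((u,v,k)\) gives \([c,\psi(k)]=0\). This is insufficient by itself, so I would also use \((u,k,Y)\) for \(Y\in L_1\) with \([u,k]=0\), as in the \(H\) case: take \(k=\ell^{(2)}\) with \(u=\ell\), and separately handle \(k=E\). This gives \([[Y,\ell],\psi(k)]=0\). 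Then Lemma~\ref{lem:H-low-degree}-type surjectivity, \(\{L_1,\ell\}\supseteq\mathfrak{sp}(V)\), together with \([V,\ell]=\K c\), pins \(\psi(k)\) into the joint centraliser of \(\mathfrak{sp}(V)\) and \(c\). Combined with \((c,u,k)\) giving \([V,\psi(k)]=0\), we get \(\psi(k)\in\K c\). The remaining scalar is removed using \((c,t^{(2)},\cdot)\) and \eqref{eq:K-ct2}, or by weight considerations under \(E\).

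\textbf{Step 3: conclusion.} Apply Lemma~\ref{lem:propagation} with the given \(V\) and \(A=L_0\). Here \([V,L_0]=V\) holds because \([E,u]=-u\). We also have \([A,A]\supseteq\mathfrak{sp}(V)\), and \(\Cent(V)\cap\Cent(\mathfrak{sp}(V))=\K c\cap\Cent(\mathfrak{sp}(V))\). Unfortunately \(c\) commutes with \(\mathfrak{sp}(V)\), so the lemma does not apply directly. I would instead take \(V'=\K c\oplus V\) and \(A=L_0\), noting that \([V',L_0]=V'\) because \([E,c]=-2c\) and \(2\) is invertible. Then \(\Cent(V')\cap\Cent([L_0,L_0])=\K c\cap\Cent(\mathfrak{sp}(V))=\K c\), which is still nonzero. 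So the final step is to run the proof of Lemma~\ref{lem:propagation} by hand. Its argument gives \(\psi(y)\in\K c\) for all \(y\). Then the identity on \((c,t^{(2)},y)\), with \(\psi(c)=0\) and \(\psi(t^{(2)})=\mu c\) for some \(\mu\), yields \([2E,\psi(y)]\) plus terms involving \(\psi(t^{(2)})\). A further choice such as \((u,tv,y)\) should force \(\psi(y)=0\).

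The main obstacle is exactly this nontrivial centre-like element \(c\). Controlling the \(c\)-components of \(\psi\) in every step requires extra identities involving \(E\) and \(t^{(2)}\), and that bookkeeping is where I expect most of the work.
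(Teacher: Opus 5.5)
Your outline (normalise on \(c\), then on \(V\), then on \(L_0\), then propagate) is the right one, but Step~1 has a genuine gap: the step that removes the \(c\)-component of \(\varphi|_V\) would fail as written.

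First, a simplification. Your displayed identity for \((c,u,tv)\) already says \([v,\varphi(u)-\lambda u]=0\) for all \(v\in V\). So \(\varphi(u)=\lambda u+\ell(u)c\) follows at once from Lemma~\ref{lem:K-centralizer}, and the detour through \(L_1\) and \(\{x\mid[V,x]\subseteq\K c\}\) is unnecessary. Your formula \(\omega(v,\varphi(u))=\lambda\,\omega(u,v)\) also has the wrong sign.

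The triples you propose for killing \(\ell\) do not involve it. The identity for \((E,c,u)\) reads \([-2c,\lambda u+\ell(u)c]+[u,\lambda c]=0\), which is vacuous. The one for \((E,u,v)\) reduces to \(\omega(u,v)\bigl([c,\varphi(E)]-2\lambda c\bigr)=0\), which constrains only \(\varphi(E)\). Weight considerations cannot replace them, because \(u\mapsto\ell(u)c\) is homogeneous of degree \(-1\) and is compatible with an \(\operatorname{ad}E\)-weight decomposition of \(\HomLie(G)\).

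The missing idea is an element whose bracket with \(c\) is a non-zero multiple of \(E\). You invoke \eqref{eq:K-ct2} only in Steps~2--3, but it is needed here. After subtracting \(\lambda\id_G\), the identity for \((c,u,t^{(2)})\) gives \([[t^{(2)},c],\psi(u)]=-2\ell(u)[E,c]=4\ell(u)c=0\). This is how the paper obtains \(\psi(V)=0\).

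Steps~2 and~3 have further defects, all repairable.

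\emph{Vanishing on \(V\).} The identity for \((c,u,k)\) is identically zero once \(\psi(c)=\psi(u)=0\), so it cannot give \([V,\psi(k)]=0\). Use \((c,k,tv)\) for \(k\in\mathfrak{sp}(V)\), as the paper does. Alternatively, use your own triple \((\ell,\ell^{(2)},Y)\) with \(Y\in\mathfrak{sp}(V)\) and \([\mathfrak{sp}(V),\ell]=V\). This gives \(\psi(\ell^{(2)})=\mu c\), and then \(Y=tv\) with \(\omega(\ell,v)\neq0\) gives \(2\omega(\ell,v)\mu c=0\).

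\emph{The element \(E\).} You give no argument for \(E\), and the \(H\)-style one is unavailable because no non-zero \(u\in V\) commutes with \(E\). It is also not needed. Once \(\psi\) vanishes on \(V\oplus\mathfrak{sp}(V)\), the identity for \((u,A,y)\) together with \([V,\mathfrak{sp}(V)]=V\) gives \(\psi(y)=\beta(y)c\) for every \(y\in G\). So your hand-run propagation should use \(A=\mathfrak{sp}(V)\), not \(L_0\).

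\emph{Final step.} The identity for \((u,tv,y)\) yields \(\beta(y)[[u,tv],c]+\beta(tv)[[y,u],c]=0\), and the second term must be disposed of first. Either choose \(v\neq0\) with \(\beta(tv)=0\), which is possible since \(\dim V\geqslant2\) (the paper's device), or use \((c,t^{(2)},tv)\), which gives \(-4\beta(tv)c=0\) because \([[tv,c],c]=0\). Then \([[u,tv],c]=-2\omega(u,v)c\neq0\) forces \(\beta=0\).
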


\begin{proof}
Let \(L=K(2r+1;\uln)^{(1)}\), and take \(\varphi\in\HomLie(G)\).

\smallskip\noindent\textbf{The component \(L_{-2}\).}
Let
\(A\in\mathfrak{sp}(V)\) and \(u\in V\).  Since \([c,u]=0\) and \([A,c]=0\), the
Hom--Jacobi identity for \(c,u,A\) gives
\([[u,A],\varphi(c)]=0\).
As \(u\) and \(A\) vary, the elements \([u,A]\) span the natural
\(\mathfrak{sp}(V)\)-module \(V\).  Hence \([V,\varphi(c)]=0\).  By
Lemma~\ref{lem:K-centralizer}, \(\varphi(c)\in\K c\).  Write
\(\varphi(c)=\lambda c\), and let \(\psi=\varphi-\lambda\id_G\).  Then
\(\psi\in\HomLie(G)\) and \(\psi(c)=0\).

\smallskip\noindent\textbf{The component \(L_{-1}\).}
Take \(u,v\in V\), and let \(Y=tv\in L_1\).  Applying the Hom--Jacobi identity
to \(c,u,Y\), and using
\([c,u]=0\) and \(\psi(c)=0\), gives
\([[Y,c],\psi(u)]=0\).
By \eqref{eq:K-ctv}, \([Y,c]=-2v\).  Thus \([v,\psi(u)]=0\) for all \(v\in V\).
Lemma~\ref{lem:K-centralizer} implies \(\psi(u)\in\K c\).  Hence
\(\psi(u)=\ell(u)c\) for some linear functional \(\ell\in V^*\).

Apply the Hom--Jacobi identity to \(c,u,t^{(2)}\).
The relations \([c,u]=0\) and \(\psi(c)=0\) give
\[
    [[t^{(2)},c],\psi(u)]=0.
\]
By \eqref{eq:K-ct2}, \([E,\ell(u)c]=0\).  Since \([E,c]=-2c\) and \(p>3\),
we have \(\ell(u)=0\).  Thus \(\psi(V)=0\).

\smallskip\noindent\textbf{The component \(L_0\).}
First let
\(A\in\mathfrak{sp}(V)\).  Applying the Hom--Jacobi identity to \(c,A,tv\), and
using \([c,A]=0\) and \(\psi(c)=0\), gives
\([[tv,c],\psi(A)]=0\).
As \(v\) varies, the elements \([tv,c]\) span \(V\).  Hence
\([V,\psi(A)]=0\), and Lemma~\ref{lem:K-centralizer} gives
\(\psi(A)\in\K c\).  Applying the Hom--Jacobi identity to \(c,A,t^{(2)}\), we
obtain
\([[t^{(2)},c],\psi(A)]=0\).
Since \([t^{(2)},c]\) is a non-zero scalar multiple of \(E\), and \(E\) acts
by a non-zero scalar on \(\K c\), it follows that \(\psi(A)=0\).  Therefore
\(\psi(\mathfrak{sp}(V))=0\).

It remains to deal with the grading element \(E\).  Applying the Hom--Jacobi
identity to \(u,A,E\), with \(u\in V\) and \(A\in\mathfrak{sp}(V)\), gives
\([[u,A],\psi(E)]=0\),
because \(\psi(u)=0\) and \(\psi(A)=0\).  Since the elements \([u,A]\) span
\(V\), Lemma~\ref{lem:K-centralizer} gives \(\psi(E)=\alpha c\) for some
\(\alpha\in\K\).

We show that \(\alpha=0\).  Before doing so, note that the same Hom--Jacobi
argument with \(u,A,Y\) in place of \(u,A,E\) shows that \(\psi(Y)\in\K c\) for
every \(Y\in G\), and in particular for every \(Y\in L_1\).  Choose
\(u,v\in V\) with \(\omega(u,v)\neq0\), and let \(Y=tv\).  Applying the
Hom--Jacobi identity to \(u,Y,E\), we obtain
\[
    [[u,Y],\psi(E)]+[[E,u],\psi(Y)]=0,
\]
because \(\psi(u)=0\).  The second term is zero: \([E,u]\in V\),
\(\psi(Y)\in\K c\), and \([V,c]=0\).  Hence
\([[u,Y],\alpha c]=0\).
By \eqref{eq:K-utv}, the difference
\([u,Y]-\omega(u,v)E\) lies in \(\mathfrak{sp}(V)\) and hence annihilates
\(c\), whereas \([E,c]=-2c\).  Since \(\omega(u,v)\neq0\), we must have
\(\alpha=0\).  Thus \(\psi(E)=0\), and hence \(\psi(L_0)=0\).

\smallskip\noindent\textbf{Conclusion.}
The preceding argument shows that there is a linear functional
\(\beta\colon G\to\K\) such that \(\psi(X)=\beta(X)c\) for \(X\in G\), and that
\(\beta(L_{-2}\oplus L_{-1}\oplus L_0)=0\).
Fix \(X\in G\).  Consider the linear functional \(v\mapsto\beta(tv)\) on \(V\).
Since \(\dim V=2r\geqslant2\), we may choose a non-zero \(v\in V\) with
\(\beta(tv)=0\).  By non-degeneracy of \(\omega\), choose \(u\in V\) such that
\(\omega(u,v)\neq0\), and let \(Y=tv\).  Then \(\psi(u)=0\) and \(\psi(Y)=0\).
The Hom--Jacobi identity for \(u,Y,X\) gives
\([[u,Y],\psi(X)]=0\).
Therefore \(\beta(X)[[u,Y],c]=0\).  By \eqref{eq:K-utv} and
\([\mathfrak{sp}(V),c]=0\),
one has \([[u,Y],c]=\omega(u,v)[E,c]\neq0\).
Thus \(\beta(X)=0\).  Since \(X\) was arbitrary, \(\beta=0\), so \(\psi=0\).
Consequently \(\varphi=\lambda\id_G\).
\end{proof}

\section{The Melikian algebras}\label{sec:melikian}

In this section \(\K\) is algebraically closed of characteristic \(5\).
Let \(\uln=(n_1,n_2)\in\mathbb Z_{>0}^2\).  We write
\(M=M(2;\uln)\) for the Melikian algebra usually denoted \(M(n_1,n_2)\).
We use Strade's standard realisation and record its bracket explicitly.

\subsection{The standard realisation and grading}

As a vector space,
\[
    M=W(2;\uln)\oplus \OO(2;\uln)\oplus \widetilde W(2;\uln),
\]
where \(\widetilde W(2;\uln)\) denotes a second copy of \(W(2;\uln)\).  We shall
write elements of the three summands as \(D\in W(2;\uln)\),
\(f\in \OO(2;\uln)\), and \(\widetilde F\in\widetilde W(2;\uln)\).
If \(D,E\in W(2;\uln)\), \(f,g\in\OO(2;\uln)\), and
\(F=f_1\partial_1+f_2\partial_2\) and
\(G=g_1\partial_1+g_2\partial_2\),
then the bracket is given by
\begin{align}
    [D,E]&=[D,E]_{W}, \label{eq:M-WW}\\
    [D,f]&=D(f)-2\diver(D)f, \label{eq:M-Df}\\
    [D,\widetilde F]&=\widetilde{[D,F]_{W}}+2\diver(D)\widetilde F, \notag\\
    [f,\widetilde F]&=fF, \label{eq:M-fE}\\
    [f,g]&=
        2\bigl(g\partial_2(f)-f\partial_2(g)\bigr)\widetilde\partial_1
        \notag\\
        &\qquad
        +2\bigl(f\partial_1(g)-g\partial_1(f)\bigr)\widetilde\partial_2,
        \notag\\
    [\widetilde F,\widetilde G]&=f_1g_2-f_2g_1 . \label{eq:M-tilde}
\end{align}
Here \([D,E]_W\) is the Witt bracket and
\(\diver(f_1\partial_1+f_2\partial_2)
=\partial_1(f_1)+\partial_2(f_2)\).  The tilde denotes the fixed vector-space
isomorphism from \(W(2;\uln)\) to its second copy.  In
\([f,\widetilde F]=fF\), the right-hand side is
\((ff_1)\partial_1+(ff_2)\partial_2\) in the first Witt summand, whereas
\([\widetilde F,\widetilde G]=f_1g_2-f_2g_1\) belongs to the middle summand.
We also write \(f\widetilde\partial_i=\widetilde{f\partial_i}\).  Together
with bilinearity and skew-symmetry, these formulas define a Lie algebra.
The resulting algebra \(M(2;\uln)\) is simple
\cite[(4.3.1) and Theorem~4.3.3(1)]{StradeI}.  In particular,
\(\dim_{\K}M=5^{n_1+n_2+1}\).

Write \(\OO(2;\uln)=\bigoplus_{d\geqslant0}\OO_d\) and
\(W(2;\uln)=\bigoplus_{k\geqslant-1}W_k\) for the total-degree and Witt gradings
defined above.  The \emph{canonical Melikian grading} is determined on
homogeneous elements by
\[
\begin{aligned}
    \deg_M(f\partial_i)&=3\deg(f)-3,\\
    \deg_M(f)&=3\deg(f)-2,\\
    \deg_M(\widetilde{f\partial_i})&=3\deg(f)-1.
\end{aligned}
\]
Equivalently,
\[
    M_{3k}=W_k,
    \qquad
    M_{3k+1}=\OO_{k+1},
    \qquad
    M_{3k+2}=\widetilde W_k,
\]
where \(\widetilde W_k=\{\widetilde D\mid D\in W_k\}\), and a component outside
the range of the indicated grading is zero.  It
has depth \(3\), and its low-degree part is
\[
\begin{aligned}
    M_{-3}&=U=\Span_{\K}\{d_1,d_2\}, & \mathbf d&=(d_1,d_2),\quad d_i=\partial_i,\\
    M_{-2}&=\K c, & c&=1,\\
    M_{-1}&=T=\Span_{\K}\{e_1,e_2\}, & \mathbf e&=(e_1,e_2),\quad e_i=\widetilde\partial_i,\\
    M_0&=\Span_{\K}\{a_{ij}=x_i\partial_j\mid1\leqslant i,j\leqslant2\}
        \simeq\mathfrak{gl}_2,\\
    M_1&=\Span_{\K}\{x_1,x_2\}, & \mathbf x&=(x_1,x_2).
\end{aligned}
\]
The boldface tuples \(\mathbf d\), \(\mathbf e\), and \(\mathbf x\) record the
ordered bases of the corresponding two-dimensional components.  Here
\(d_i,a_{ij}\) belong to the first Witt summand, \(c,x_i\) to the middle
divided-power summand, and \(e_i\) to the second Witt summand.  Throughout the
relations below, \(i,j,k\in\{1,2\}\).
Write \(M_-=U\oplus\K c\oplus T\).
The low-degree brackets needed below are the following:
\begin{align}
    [e_1,e_2]&=c, \notag\\
    [c,e_i]&=d_i, \notag\\
    [d_i,M_-]&=0, \label{eq:M-low3}\\
    [x_i,d_j]&=-\delta_{ij}c, \label{eq:M-low4}\\
    [x_1,c]&=-2e_2,
    &
    [x_2,c]&=2e_1, \notag\\
    [x_i,e_j]&=a_{ij}, \label{eq:M-low6}\\
    [a_{ij},d_k]&=-\delta_{ik}d_j, \label{eq:M-low7}\\
    [a_{ij},e_k]&=-\delta_{ik}e_j+2\delta_{ij}e_k, \label{eq:M-low8}\\
    [a_{ij},c]&=-2\delta_{ij}c. \label{eq:M-low9}
\end{align}
These identities follow directly from \eqref{eq:M-WW}--\eqref{eq:M-tilde}.
We shall also use the degree-two elements \(x_j\widetilde\partial_i\in M_2\),
for which
\begin{equation}\label{eq:M-degree2}
    [d_k,x_j\widetilde\partial_i]=\delta_{kj}e_i .
\end{equation}

\subsection{Centraliser computations}

We first prove two centraliser statements.  They are the only places where the
explicit realisation is used beyond the low-degree bracket table.

\begin{lemma}\label{lem:M-cent1}
Let \(\K\) be an algebraically closed field of characteristic \(5\), let
\(\uln=(n_1,n_2)\in\mathbb Z_{>0}^2\), and let
\(M=M(2;\uln)\).  In the canonical grading, write
\(U=M_{-3}=\Span_{\K}\{d_1,d_2\}\), \(M_{-2}=\K c\),
\(T=M_{-1}=\Span_{\K}\{e_1,e_2\}\), and
\(M_1=\Span_{\K}\{x_1,x_2\}\).  If \(0\neq u\in U\), then there exists
\(x\in M_1\) such that \([x,u]=0\) and
\[
    \Cent_M\bigl(U+\K c+\K[c,x]+[T,x]\bigr)=\K u .
\]
Here \([T,x]=\{[e,x]\mid e\in T\}\subseteq M_0\).
\end{lemma}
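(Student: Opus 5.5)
We take \(x\) to be the element of \(M_1\) that pairs trivially with \(u\) under \eqref{eq:M-low4}, compute the three sets \(\K c\), \(\K[c,x]\) and \([T,x]\) explicitly from the low-degree bracket table, and then cut the centraliser down in stages. The first stage, \(\Cent_M(U)=M_-\), is the only one that uses the full realisation \eqref{eq:M-WW}--\eqref{eq:M-tilde}.

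\textbf{Choice of \(x\).} Write \(u=\alpha d_1+\beta d_2\) with \((\alpha,\beta)\neq(0,0)\), and put \(x=\beta x_1-\alpha x_2\in M_1\). By \eqref{eq:M-low4}, \([x,u]=-\beta\alpha c+\alpha\beta c=0\). From \([x_1,c]=-2e_2\) and \([x_2,c]=2e_1\) we get \([c,x]=2(\alpha e_1+\beta e_2)\). Write \(\tilde u=\alpha e_1+\beta e_2\), so \(\K[c,x]=\K\tilde u\). By \eqref{eq:M-low6}, \([T,x]\) is spanned by
\[
b_j=\beta a_{1j}-\alpha a_{2j}\qquad(j=1,2).
\]

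\textbf{Step 1: \(\Cent_M(U)=M_-=U\oplus\K c\oplus T\).} Take \(z=D+f+\widetilde F\) in the three summands. Since \(\diver(d_i)=0\), the formulas \eqref{eq:M-WW}, \eqref{eq:M-Df} and the tilde rule say that \(\operatorname{ad}d_i\) acts on each summand by applying \(\partial_i\) to all coefficients. So \([d_i,z]=0\) for \(i=1,2\) forces every coefficient of \(D\), of \(f\) and of \(F\) to be a simultaneous constant of \(\partial_1,\partial_2\) in \(\OO(2;\uln)\). As in Lemma~\ref{lem:W-centralizer}, these constants are the scalars. Hence \(z\in M_-\), and the reverse inclusion is \eqref{eq:M-low3}. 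This is the step I expect to need the most care: one must check the sign and divergence conventions in the three mixed brackets, but \(\diver(d_i)=0\) removes all correction terms.

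\textbf{Step 2: centralising \(c\) and \(\tilde u\).} Write \(z=u'+\gamma c+\epsilon_1e_1+\epsilon_2e_2\) with \(u'\in U\). Then \([c,z]=\epsilon_1d_1+\epsilon_2d_2\), using \([c,U]=0\) and \([c,e_i]=d_i\). So centralising \(c\) gives \(\epsilon_1=\epsilon_2=0\). Next, \([\tilde u,z]=\gamma[\tilde u,c]=-\gamma u\), which is nonzero unless \(\gamma=0\). Hence \(z=u'=\mu_1d_1+\mu_2d_2\in U\).

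\textbf{Step 3: centralising \([T,x]\).} By \eqref{eq:M-low7},
\[
[b_j,u']=-(\beta\mu_1-\alpha\mu_2)d_j .
\]
This vanishes exactly when \(\beta\mu_1=\alpha\mu_2\), that is, when \(u'\in\K u\).

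\textbf{Step 4: \(\K u\) lies in the centraliser.} \(U\) is abelian and \([c,u]=0\) by \eqref{eq:M-low3}. Also \([\tilde u,u]=0\) since \(T\subseteq M_-\) and \([d_i,M_-]=0\). Finally \([b_j,u]=0\) by Step 3 with \((\mu_1,\mu_2)=(\alpha,\beta)\). Together with Steps 1--3, this proves \(\Cent_M\bigl(U+\K c+\K[c,x]+[T,x]\bigr)=\K u\).
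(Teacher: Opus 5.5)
Your proof is correct, and its skeleton matches the paper's: the same $x$ up to sign, a reduction through $U\oplus\K c$ to $U$ using $[[c,x],c]=\pm2u$, and the same final computation of $[[T,x],z]$ via \eqref{eq:M-low7}. The one real difference is the first stage.

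The paper imposes $[c,z]=0$ on a general $z=D+h+\widetilde F$ in the full realisation. This needs \eqref{eq:M-Df}, \eqref{eq:M-fE} and the $[f,g]$ formula, and yields $F=0$, $\diver(D)=0$ and $h\in\K$. Only afterwards does the paper use $U$ on the Witt component.

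You instead first prove $\Cent_M(U)=M_-$. For this you use only that $\operatorname{ad} d_i$ applies $\partial_i$ coefficientwise on all three summands, since $\diver(d_i)=0$. You then remove the $T$- and $c$-components inside $M_-$ using the low-degree table alone.

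Your ordering is a little more economical. The intermediate identity $\Cent_M(U)=M_-$ also gives Lemma~\ref{lem:M-cent2} immediately, as the centre of $M_-$. You also check explicitly that $\K u$ lies in the centraliser, which the paper leaves implicit in its final ``if and only if''.
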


\begin{proof}
Write \(u=\alpha d_1+\beta d_2\), with \((\alpha,\beta)\neq(0,0)\), and let
\(x=-\beta x_1+\alpha x_2\).  By \eqref{eq:M-low4}, \([x,u]=0\).  Moreover,
\([c,x]=-2(\alpha e_1+\beta e_2)\) is non-zero and
\([[c,x],c]=2u\).

Let \(z\in M\) centralise both \(c\) and \(U\).  Write
\[
    z=D+h+\widetilde F,
    \qquad
    D=g_1\partial_1+g_2\partial_2,
    \qquad
    F=f_1\partial_1+f_2\partial_2,
\]
with \(h\in\OO(2;\uln)\).  Using
\eqref{eq:M-Df}--\eqref{eq:M-fE}, we compute
\[
    [c,z]
    =
    F+2\diver(D)c-2\partial_2(h)\widetilde\partial_1
      +2\partial_1(h)\widetilde\partial_2 .
\]
The three summands \(W(2;\uln)\), \(\OO(2;\uln)\), and
\(\widetilde W(2;\uln)\) are independent.  Hence \([c,z]=0\) implies
\[
    F=0,\qquad
    \diver(D)=0,\qquad
    \partial_1(h)=\partial_2(h)=0.
\]
Thus \(h\in\K c\).  Since \(z\) also centralises \(d_1\) and \(d_2\), we have
\[
    [d_i,D]=\sum_s \partial_i(g_s)\partial_s=0
    \qquad (i=1,2).
\]
Therefore the coefficients \(g_s\) are scalar, so \(D\in U\).  We have shown
that any element centralising \(c\) and \(U\) lies in \(U\oplus\K c\).

Now suppose in addition that \(z\) centralises \([c,x]\).  Write
\(z=w+\mu c\), with \(w\in U\).  Since \([U,M_-]=0\), whereas
\([[c,x],c]=2u\neq0\), the equality \([[c,x],z]=0\) forces \(\mu=0\).  Hence
\(z\in U\).

It remains to impose the centralising condition with \([T,x]\).  Directly from
\eqref{eq:M-low6}, one obtains
\[
    [e_1,x]=\beta a_{11}-\alpha a_{21},
    \qquad
    [e_2,x]=\beta a_{12}-\alpha a_{22}.
\]
Write \(z=\rho d_1+\sigma d_2\in U\).  Using \eqref{eq:M-low7}, we get
\[
    [[e_1,x],z]=(-\beta\rho+\alpha\sigma)d_1,
    \qquad
    [[e_2,x],z]=(-\beta\rho+\alpha\sigma)d_2 .
\]
Thus \(z\) centralises \([T,x]\) if and only if
\(\alpha\sigma=\beta\rho\), which is equivalent to \(z\in\K u\).  This proves the
lemma.
\end{proof}

\begin{lemma}\label{lem:M-cent2}
Let \(\K\) be an algebraically closed field of characteristic \(5\), let
\(\uln=(n_1,n_2)\in\mathbb Z_{>0}^2\), and let
\(M=M(2;\uln)\).  In its canonical grading, let
\(U=M_{-3}\), \(M_{-2}=\K c\), \(T=M_{-1}\), and
\(M_-=U\oplus\K c\oplus T\).  Then
\[
    \Cent_M(M_-)=U .
\]
\end{lemma}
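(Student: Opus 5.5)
The plan is to reuse the first half of the proof of Lemma~\ref{lem:M-cent1}. There we showed that any \(z\in M\) centralising both \(c\) and \(U\) lies in \(U\oplus\K c\). That computation used only \([c,z]=0\) and \([d_i,z]=0\); both conditions hold for \(z\in\Cent_M(M_-)\), because \(c\in M_-\) and \(U\subseteq M_-\).

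So write \(z=D+h+\widetilde F\) with \(D=g_1\partial_1+g_2\partial_2\). By the displayed formula
\[
 [c,z]=F+2\diver(D)c-2\partial_2(h)\widetilde\partial_1+2\partial_1(h)\widetilde\partial_2,
\]
the condition \([c,z]=0\) forces \(F=0\), \(\diver(D)=0\), and \(h\in\K\). Then \([d_i,D]=0\) for \(i=1,2\) makes the coefficients \(g_s\) constant, so \(z=w+\mu c\) with \(w\in U\).

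Next we impose centralisation of \(T\). Since \([U,M_-]=0\) by \eqref{eq:M-low3}, we have \([e_i,w]=0\). On the other hand \([c,e_i]=d_i\neq0\), so \([e_1,z]=-\mu d_1\), and this must vanish. Hence \(\mu=0\) and \(z\in U\).

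Conversely, \(U\subseteq\Cent_M(M_-)\) is exactly \eqref{eq:M-low3}. This proves \(\Cent_M(M_-)=U\).

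The only step needing care is making sure the direct-sum decomposition used in the \([c,z]\) computation is legitimate. The three summands of \(M\) are linearly independent, and each bracket lands in its indicated summand by \eqref{eq:M-Df}--\eqref{eq:M-tilde}. Since this was already done in Lemma~\ref{lem:M-cent1}, I expect no real obstacle; the argument is essentially a citation of that computation followed by the one-line \(T\)-check.
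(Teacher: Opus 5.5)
Your proposal is correct and is the paper's proof. Both reuse the computation from Lemma~\ref{lem:M-cent1} to obtain \(z\in U\oplus\K c\). Both then eliminate the \(c\)-component using \([c,e_i]=d_i\) and \([U,T]=0\), and take the reverse inclusion from \eqref{eq:M-low3}.
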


\begin{proof}
Let \(z\in M\) centralise \(M_-\).  In particular, \(z\) centralises \(c\) and
\(U\).  The computation in the proof of Lemma~\ref{lem:M-cent1} gives
\(z\in U\oplus\K c\).  Write \(z=w+\mu c\), with \(w\in U\).  Since
\([U,T]=0\) and \([c,e_i]=d_i\), the condition \([z,T]=0\) forces
\(\mu=0\).  Hence \(z\in U\).

Conversely, \eqref{eq:M-low3} says that every element of \(U\) centralises
\(M_-\).  Therefore \(\Cent_M(M_-)=U\).
\end{proof}

\subsection{Hom--Lie structures on the Melikian algebras}

We now prove that every Hom--Lie structure is scalar.  The first step is to
determine a Hom--Lie
structure on the lowest component \(U=M_{-3}\).

\begin{lemma}\label{lem:M1}
Let \(\K\) be an algebraically closed field of characteristic \(5\), let
\(\uln=(n_1,n_2)\in\mathbb Z_{>0}^2\), let
\(M=M(2;\uln)\), and write \(U=M_{-3}\) for the lowest component of the
canonical grading.  If \(\theta\in\HomLie(M)\), then
\(\theta(U)\subseteq U\), and there exists
\(\lambda\in\K\) such that
\[
    \theta|_U=\lambda\id_U .
\]
\end{lemma}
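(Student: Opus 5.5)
The plan is to use Lemma~\ref{lem:M-cent1} to pin each image \(\theta(u)\), \(0\neq u\in U\), to the line \(\K u\). Then \(\theta|_U\) preserves every one-dimensional subspace of \(U\), which forces it to be scalar.

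Fix \(0\neq u\in U\), and let \(x\in M_1\) be the element supplied by Lemma~\ref{lem:M-cent1}, so that \([x,u]=0\). Recall that \([U,M_-]=0\) by \eqref{eq:M-low3}. I would apply the Hom--Jacobi identity to several triples containing \(u\), each chosen so that all but the term involving \(\theta(u)\) vanish.

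\begin{itemize}
\item \emph{Triple \(u,x,y\).} Here \([u,x]=0\), so the identity reads \([[y,u],\theta(x)]+[[x,y],\theta(u)]=0\). If \(y\in M_-\), then \([y,u]=0\), and we get \([[x,y],\theta(u)]=0\).
\item \emph{Case \(y=c\).} This shows that \(\theta(u)\) centralises \([x,c]\), hence also \(\K[c,x]\).
\item \emph{Case \(y=e\in T\).} This shows that \(\theta(u)\) centralises \([x,e]\), hence all of \([T,x]\).
\item \emph{Centralising \(U\).} For \(u'\in U\), the triple \(u,u',y\) gives \([[y,u],\theta(u')]+[[u',y],\theta(u)]=0\). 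Choose \(y\) in degree \(\geqslant 0\) with \([y,u]=0\) and \([u',y]\) ranging over \(U\). Taking \(y=a_{ij}\in M_0\) with \(i\) such that \([a_{ij},u]=0\), relation \eqref{eq:M-low7} gives \([a_{ij},u']=-(\text{coefficient})\,d_j\). With \(u'\) suitable, this yields \([d_j,\theta(u)]=0\) for each \(j\).
\item \emph{Centralising \(c\).} Similarly, using \(y\in M_1\) with \([y,u]=0\), namely \(y=x\), together with \(u'\) such that \([u',x]\) is a nonzero multiple of \(c\) by \eqref{eq:M-low4}, gives \([c,\theta(u)]=0\).
\end{itemize}

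Collecting these, \(\theta(u)\) centralises \(U+\K c+\K[c,x]+[T,x]\), so Lemma~\ref{lem:M-cent1} gives \(\theta(u)\in\K u\).

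The main obstacle is arranging the vanishing of the auxiliary term \([[y,u],\theta(u')]\) when \(\theta(u')\) is not yet controlled. I would first try choices with \([y,u]=0\) exactly; the element \(x\) from Lemma~\ref{lem:M-cent1} is designed for this. For \(U\) itself, I would use \(a_{ij}\) in place of \(y\) with \(i\) chosen to kill \(u\); when \(u\) is not a coordinate vector, I would change basis of \(U\) via the \(\mathfrak{gl}_2\) symmetry of \(M_0\) to reduce to \(u=d_1\). As a fallback, Lemma~\ref{lem:HomLie-module} lets one conjugate \(\theta\) by \(\mathrm{ad}\) of \(M_0\) to move any \(u\) to \(d_1\).

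Once \(\theta(u)\in\K u\) holds for every nonzero \(u\), the endomorphism \(\theta|_U\) of the two-dimensional space \(U\) stabilises every line. Hence \(\theta|_U=\lambda\id_U\), and in particular \(\theta(U)\subseteq U\).
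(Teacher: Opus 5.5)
Your overall strategy is the paper's. With \(x\) from Lemma~\ref{lem:M-cent1}:
\begin{itemize}
\item the triples \(u,x,c\) and \(u,x,e\) show that \(\theta(u)\) centralises \([c,x]\) and \([T,x]\);
\item the triple \(u,u',x\) with \([u',x]\in\K^\times c\) gives \([c,\theta(u)]=0\);
\item Lemma~\ref{lem:M-cent1} and the line-stabilising argument then finish the proof.
\end{itemize}

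The one step where you deviate is \([U,\theta(u)]=0\), and as written it has a gap. For \(u=\alpha d_1+\beta d_2\) with \(\alpha\beta\neq0\), no single \(a_{ij}\) annihilates \(u\). Neither of your remedies works as stated:
\begin{itemize}
\item nothing in the paper supplies automorphisms of \(M\) that move \(u\) to \(d_1\) while preserving the graded pieces you use;
\item the action \(\theta\bullet h\) of Lemma~\ref{lem:HomLie-module} is infinitesimal, not a conjugation. It replaces \(\theta\) by a different Hom--Lie map rather than transporting \(u\).
\end{itemize}

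The repair is immediate and needs no symmetry. Take \(y=\beta a_{1j}-\alpha a_{2j}\), which is exactly \([e_j,x]\). By \eqref{eq:M-low7},
\[
[y,u]=0,\qquad [d_1,y]=\beta d_j,\qquad [d_2,y]=-\alpha d_j .
\]
So your triple \(u,u',y\), with \(u'=d_1\) or \(u'=d_2\) as appropriate, gives \([d_j,\theta(u)]=0\). Equivalently, \(\operatorname{ad}M_0|_U=\mathfrak{gl}(U)\), so some element of \(M_0\) kills \(u\) and sends a complementary vector anywhere.

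The paper takes a shorter route for this step. It applies the identity to \(u,e,c\) with \(e\in T\); there \([u,e]=[c,u]=0\), and the elements \([e,c]\) span \(U\), so \([U,\theta(u)]=0\) follows at once without using \(M_0\).
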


\begin{proof}
Fix \(0\neq u\in U\).  We shall prove that \(\theta(u)\in\K u\).

Let \(v\in U\) and \(x\in M_1\).  Since \([u,v]=0\), the Hom--Jacobi identity for
\(u,v,x\) gives
\[
    [[x,u],\theta(v)]+[[v,x],\theta(u)]=0 .
\]
Choose \(x\in M_1\) with \([x,u]=0\), as in Lemma~\ref{lem:M-cent1}.  Since
\(x\neq0\), we may choose \(v\in U\) with \([x,v]\neq0\).  Then \([x,v]\) is a
non-zero scalar multiple of \(c\), and hence the preceding identity gives
\([c,\theta(u)]=0\).

Next take \(e\in T\), and apply the Hom--Jacobi identity to \(u,e,c\).  Since
\([u,e]=0\) and \([c,u]=0\), we obtain
\([[e,c],\theta(u)]=0\).
As \(e\) varies in \(T\), the elements \([e,c]\) span \(U\).  Thus
\([U,\theta(u)]=0\).

Now choose \(x\in M_1\) as in Lemma~\ref{lem:M-cent1}, so that \([x,u]=0\).
The Hom--Jacobi identity for \(u,c,x\) gives
\([[c,x],\theta(u)]=0\),
and the Hom--Jacobi identity for \(u,e,x\), with \(e\in T\), gives
\([[e,x],\theta(u)]=0\).
Hence \(\theta(u)\) centralises \(U+\K c+\K[c,x]+[T,x]\).  By
Lemma~\ref{lem:M-cent1}, \(\theta(u)\in\K u\).

Since this holds for every non-zero \(u\in U\), the map \(\theta\) preserves every
one-dimensional subspace of \(U\).  In particular, \(\theta(U)\subseteq U\).  If
\(u_1,u_2\) are linearly independent and
\(\theta(u_i)=\lambda_i u_i\), then applying the same conclusion to \(u_1+u_2\)
gives \(\lambda_1=\lambda_2\).  Therefore
\(\theta|_U=\lambda\id_U\) for some \(\lambda\in\K\).
\end{proof}

\begin{lemma}\label{lem:M2}
Let \(\K\) be an algebraically closed field of characteristic \(5\), let
\(\uln=(n_1,n_2)\in\mathbb Z_{>0}^2\), let
\(M=M(2;\uln)\), and write \(U=M_{-3}\), \(M_{-2}=\K c\), and
\(T=M_{-1}\) for the lowest components of its canonical grading.  If
\(\theta\in\HomLie(M)\) and \(\theta(U)=0\), then
\[
    \theta(c)=0
    \qquad\text{and}\qquad
    \theta(T)=0 .
\]
\end{lemma}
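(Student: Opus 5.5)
The plan is to exploit the Hom--Jacobi identity with one argument taken in \(U\): since \(\theta(U)=0\), every such instance loses one of its three terms. For \(d\in U\) and \(y,z\in M\), the identity for \(d,y,z\) reads
\[
   [[d,y],\theta(z)]=[[d,z],\theta(y)] .
\]
We apply this twice: first to push \(\theta(c)\) and \(\theta(T)\) into \(U\), and then to show that these \(U\)-components vanish.

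\textbf{Step 1: \(\theta(c)\) and \(\theta(T)\) lie in \(U\).}
Let \(z\in\{c,e_1,e_2\}\), so that \(z\in M_-\). For every \(d\in U\), \eqref{eq:M-low3} gives \([d,z]=0\), and the displayed identity becomes \([[d,y],\theta(z)]=0\) for all \(y\in M\). Hence \(\theta(z)\) centralises \([M,U]\). We check that \([M,U]\) contains all of \(M_-\):
\begin{itemize}
\item by \eqref{eq:M-low4}, \([x_i,d_i]=-c\), so \(c\in[M,U]\);
\item by \eqref{eq:M-low7}, \([a_{ij},d_i]=-d_j\), so \(U\subseteq[M,U]\);
\item by \eqref{eq:M-degree2}, \([x_j\widetilde\partial_i,d_j]=-e_i\), so \(T\subseteq[M,U]\).
\end{itemize}
Therefore \(\theta(z)\in\Cent_M(M_-)\), and Lemma~\ref{lem:M-cent2} gives \(\theta(z)\in U\).

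\textbf{Step 2: the \(U\)-components vanish.}
Write \(w=\theta(z)\in U\), with \(z\in M_-\) as before. Take \(y\in M_3=W_1\) and \(d\in U\). Because \([d,z]=0\), the identity gives \([[d,y],w]=0\). Now \([U,W_1]\) is computed entirely inside the first Witt summand \(W(2;\uln)\) via \eqref{eq:M-WW}. The spanning argument of Lemma~\ref{lem:W-L1-generates-L0} uses only \(x_ix_q\partial_s\) and \(x_i^{(2)}\partial_s\), so it applies verbatim for \(m=2\), and \([W_1,\partial_i]=W_0=M_0\). Hence \(w\) is annihilated by all of \(M_0\simeq\mathfrak{gl}_2\). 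By \eqref{eq:M-low7}, writing \(w=\rho d_1+\sigma d_2\), we have \([a_{ij},w]=-\rho_i d_j\), where \(\rho_1=\rho\) and \(\rho_2=\sigma\); this forces \(w=0\). Thus \(\theta(c)=0\) and \(\theta(T)=0\).

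\textbf{Expected difficulty.}
The main point is the second step. The naive choices of \(y\) in low degree, namely \(M_1\), \(M_0\) or \(M_2\), all give trivial relations, because \(U\) is central in \(M_-\). One has to reach degree \(3\), where \([U,M_3]=M_0\) acts faithfully on \(U\). The only genuine verifications are the bracket signs in \eqref{eq:M-degree2} and \eqref{eq:M-low7} and the transfer of Lemma~\ref{lem:W-L1-generates-L0} to the case \(m=2\) inside \(M\).
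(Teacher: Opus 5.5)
Your proof is correct, but it takes a different route from the paper's.

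\textbf{The paper's route.} The paper argues through the module structure of Lemma~\ref{lem:HomLie-module}. For \(0\neq x\in M_1\), the map \(\theta\bullet x\) is again a Hom--Lie structure. Its restriction to \(U\) is \(u\mapsto-\theta([u,x])\), which has rank at most one because \([U,x]=\K c\). Lemma~\ref{lem:M1} forces this restriction to be scalar, hence zero, so \(\theta(c)=0\). The same rank argument with \(y=x_j\widetilde\partial_i\in M_2\) and \eqref{eq:M-degree2} gives \(\theta(e_i)=0\).

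\textbf{Your route.} You apply the Hom--Jacobi identity directly to triples \((d,y,z)\) with \(d\in U\) and \(z\in M_-\). Then \([U,M_-]=0\) and \(\theta(U)=0\) kill two of the three terms, so \(\theta(z)\) centralises \([U,M]\). This set contains \(c=[d_i,x_i]\), \(U=[U,M_0]\), \(T=[U,M_2]\), and \(M_0=[U,M_3]\). The last equality is the \(m=2\) case of Lemma~\ref{lem:W-L1-generates-L0}, which that lemma covers since it assumes only \(m\geqslant2\), and the bracket is the Witt bracket by \eqref{eq:M-WW}. Lemma~\ref{lem:M-cent2}, together with the absence of \(M_0\)-fixed vectors in \(U\), then finishes the argument.

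\textbf{Comparison.} The paper's argument uses only the degree-one and degree-two brackets with \(U\) and needs no centraliser computation at this step. However, it depends on Lemma~\ref{lem:HomLie-module} and on Lemma~\ref{lem:M1}, and hence on the more delicate Lemma~\ref{lem:M-cent1}. Your argument is independent of both and handles \(c\) and \(T\) in one stroke. It relies only on Lemma~\ref{lem:M-cent2}, which the paper needs anyway for the last step of Theorem~\ref{thm:Melikian}. The cost is going up to degree \(3\), where, as you correctly observe, \(U\) first meets a component acting faithfully on it. In spirit, your argument mirrors the ``vanishing on \(L_0\)'' steps of Theorems~\ref{thm:W} and~\ref{thm:S}.
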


\begin{proof}
We use the \(M\)-module stability of \(\HomLie(M)\) from
Lemma~\ref{lem:HomLie-module}.  Let \(0\neq x\in M_1\).  Then
\(\theta\bullet x\in\HomLie(M)\), and for \(u\in U\) we have
\[
    (\theta\bullet x)(u)
    =
    [\theta(u),x]-\theta([u,x])
    =
    -\theta([u,x]).
\]
Since \([U,x]=\K c\), the restriction of \(\theta\bullet x\) to \(U\) has rank at
most \(1\).  By Lemma~\ref{lem:M1}, this restriction must be a scalar multiple of
\(\id_U\).  A non-zero scalar multiple of \(\id_U\) has rank \(2\), whereas here
the rank is at most \(1\).  Hence
\((\theta\bullet x)(U)=0\).
Thus \(\theta([U,x])=0\), and since \([U,x]=\K c\), we get
\(\theta(c)=0\).

Now fix \(i\in\{1,2\}\), and choose any \(j\in\{1,2\}\).  Let
\(y=x_j\widetilde\partial_i\in M_2\).  Again
\(\theta\bullet y\in\HomLie(M)\).  By \eqref{eq:M-degree2}, for \(d_k\in U\),
\[
    (\theta\bullet y)(d_k)
    =
    [\theta(d_k),y]-\theta([d_k,y])
    =
    -\delta_{kj}\theta(e_i).
\]
This linear map from \(U\) has a non-zero kernel.  Lemma~\ref{lem:M1} says that
its restriction to \(U\) is a scalar multiple of \(\id_U\).  The only scalar map
on \(U\) with a non-zero kernel is the zero map.  Hence \(\theta(e_i)=0\).  Since
\(i\) was arbitrary, \(\theta(T)=0\).
\end{proof}

\begin{theorem}\label{thm:Melikian}
Let \(\K\) be an algebraically closed field of characteristic \(5\), and let
\(\uln=(n_1,n_2)\in\mathbb Z_{>0}^2\).  Let \(M=M(2;\uln)\).  Then
\[
    \HomLie(M)=\K\id_M .
\]
\end{theorem}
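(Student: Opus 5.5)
Let \(\varphi\in\HomLie(M)\).  By Lemma~\ref{lem:M1} there is \(\lambda\in\K\) with \(\varphi|_U=\lambda\id_U\).  Put \(\psi=\varphi-\lambda\id_M\); then \(\psi\in\HomLie(M)\) and \(\psi(U)=0\).  Lemma~\ref{lem:M2} then gives \(\psi(c)=0\) and \(\psi(T)=0\), so \(\psi(M_-)=0\).  The task is to show that a Hom--Lie structure vanishing on \(M_-\) is zero.

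I would apply the Hom--Jacobi identity to pairs from \(M_-\).  For \(a,b\in M_-\) and any \(y\in M\), we have \(\psi(a)=\psi(b)=0\), so
\[
 [[a,b],\psi(y)]=0 .
\]
The brackets \([e_1,e_2]=c\) and \([c,e_i]=d_i\) show that \([M_-,M_-]=U\oplus\K c\).  Hence \(\psi(y)\) centralises \(c\) and \(U\).  The computation in the proof of Lemma~\ref{lem:M-cent1} then puts \(\psi(y)\) in \(U\oplus\K c\).  To remove the \(c\)-component and the \(U\)-component, I would use a version of Lemma~\ref{lem:propagation} in which the vanishing set contains \(M_0\) or \(M_1\), not only \(M_-\).

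The cleanest route I see uses the module action once more.  For any \(z\in M\), the map \(\psi\bullet z\) lies in \(\HomLie(M)\) by Lemma~\ref{lem:HomLie-module}.  On \(u\in U\) it acts by
\[
 (\psi\bullet z)(u)=[\psi(u),z]-\psi([u,z])=-\psi([u,z]).
\]
By Lemma~\ref{lem:M1}, this restriction to \(U\) is a scalar \(\mu_z\id_U\).  Taking \(z\) homogeneous of degree \(k\), the map \(u\mapsto -\psi([u,z])\) sends \(U\) into \(\psi(M_{k-3})\).  I would argue by induction on the degree \(k\geqslant0\), with induction hypothesis \(\psi(M_{j})=0\) for all \(j<k-3\).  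The target is \(\psi(M_{k-3})=0\).

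To make this work, three things need to be shown.
\begin{enumerate}
\item[(a)] \(\psi(M_j)\subseteq U\oplus\K c\) for every \(j\), as established above.
\item[(b)] Whenever \(\psi\) vanishes on \(M_{<j}\) and \(z\in M_{j+3}\), the scalar \(\mu_z\) is \(0\).  Then \(\psi([U,z])=0\).
\item[(c)] \([U,M_{j+3}]=M_j\) for all \(j\geqslant -3\).
\end{enumerate}
Step (c) is a transitivity statement: the \(d_i=\partial_i\) act as derivations on all three summands, and every element is a derivative.  It follows from \([d_i,f\partial_s]=\partial_i(f)\partial_s\), \([d_i,f]=\partial_i f\), and the analogous formula on \(\widetilde W\), since every divided-power monomial is \(\partial_i\) of another.

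For (b), I would compare two ways of computing.  When \(\mu_z\neq0\), the scalar \(\mu_z\) forces \(\psi([u,z])=-\mu_z u\) for all \(u\in U\).  Since \(\psi\bullet z\) is Hom--Lie and vanishes on \(M_{<j}\) except for contributions shifted by degree, I would evaluate the identity for \(\psi\bullet z\) on triples like \(u,e,c\).  More simply, I would use the grading: \(\HomLie(M)\) is stable under the grading derivation.  The degree derivation \(\deg\) is not inner, but \(M_0\simeq\mathfrak{gl}_2\) contains the element \(a_{11}+a_{22}\), which acts on \(M_k\) by a degree-dependent scalar.  From \eqref{eq:M-low7}--\eqref{eq:M-low9}, it acts on \(U\), \(c\), \(T\) by \(-1,-4,1\); these are congruent to \(4,1,1\) mod 5, so this may fail to separate degrees.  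As a fallback I would use a torus from \(M_0\) to decompose \(\psi\) into weight components, as in Theorem~\ref{thm:classical}.  The main obstacle is exactly (b), that is, ruling out \(\mu_z\neq0\).  I expect to handle it with the torus weights: a weight component of \(\psi\) with \(\psi|_U\) scalar and non-zero must have weight \(0\), hence must preserve \(U\).  But \(\psi([U,z])\) lands in \(U\) only when \(z\) has weight \(0\) relative to \(U\); checking via Lemma~\ref{lem:M-cent2} that this forces \(\mu_z=0\) will need care.

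Once \(\psi(M)=0\) in every degree, we get \(\varphi=\lambda\id_M\).
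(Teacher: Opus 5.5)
\paragraph{Verdict.}
Your reduction to \(\psi(M_-)=0\) and your step~(a) are correct. The proof is not complete, however: step~(b), on which the whole induction rests and which you yourself call the main obstacle, is never proved. The torus-weight plan is only sketched, and it is not a good tool here. The grading-type element \(a_{11}+a_{22}\) acts on \(M_k\) by the scalar \(2k\in\mathbb F_5\); its eigenvalue on \(T\) is \(3\), not \(1\). So weights of a torus in \(M_0\) cannot separate components whose degrees are congruent modulo \(5\).

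Step~(c) is also false in the top degrees. The monomial \(x^{(\tau)}\), with \(\tau=(5^{n_1}-1,5^{n_2}-1)\), is not a derivative of anything. Hence \(x^{(\tau)}\partial_s\), \(x^{(\tau)}\) and \(x^{(\tau)}\widetilde\partial_s\) do not lie in \([U,M]\), and for degree reasons the top component \(M_{3\lvert\tau\rvert-1}\) is not even contained in \([M_-,M]\). So even a complete proof of~(b) would leave the top components unreached by your transitivity argument.

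\paragraph{The missing idea.}
Use the conclusion of Lemma~\ref{lem:M2} on \(T\), not only that of Lemma~\ref{lem:M1} on \(U\). This alone settles~(b) for every \(z\in M\), with no induction hypothesis:
\begin{itemize}
\item \(\psi\bullet z-\mu_z\id_M\) is a Hom--Lie structure vanishing on \(U\);
\item so Lemma~\ref{lem:M2} gives \(-\psi([e,z])=(\psi\bullet z)(e)=\mu_z e\) for \(e\in T\);
\item by your~(a), the left-hand side lies in \(U\oplus\K c\), so \(\mu_z=0\).
\end{itemize}

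\paragraph{The paper's route.}
The paper needs even less. For \(x\in M_1\) one has \([U,x]\subseteq\K c\), so \(\psi\bullet x\) vanishes on \(U\) outright. Lemma~\ref{lem:M2} then gives \(\psi([T,x])=0\), and since \([T,M_1]=M_0\) by~\eqref{eq:M-low6}, \(\psi(M_0)=0\). Lemma~\ref{lem:propagation} with \(V=M_-\) and \(A=M_0\) finishes in one step. Its hypotheses hold because:
\begin{itemize}
\item \([M_-,M_0]=M_-\);
\item \(\Cent_M(M_-)=U\) by Lemma~\ref{lem:M-cent2};
\item \([M_0,M_0]\simeq\mathfrak{sl}_2\) fixes no non-zero vector of \(U\).
\end{itemize}
This is exactly the ``version of the propagation lemma with \(M_0\) in the vanishing set'' you anticipated. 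It makes the degree induction, and with it step~(c), unnecessary, and it handles the top components automatically.
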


\begin{proof}
Let \(\varphi\in\HomLie(M)\).

\smallskip\noindent\textbf{The negative components.}
By Lemma~\ref{lem:M1}, there exists \(\lambda\in\K\) such that
\(\varphi|_U=\lambda\id_U\).  Let \(\psi=\varphi-\lambda\id_M\).
Then \(\psi\in\HomLie(M)\) and \(\psi(U)=0\).  Lemma~\ref{lem:M2} gives
\(\psi(c)=0\) and \(\psi(T)=0\).  Thus \(\psi(M_-)=0\).

\smallskip\noindent\textbf{The component \(M_0\).}
Let \(x\in M_1\).  By
Lemma~\ref{lem:HomLie-module}, \(\psi\bullet x\in\HomLie(M)\).  For \(u\in U\),
\[
    (\psi\bullet x)(u)
    =
    [\psi(u),x]-\psi([u,x])
    =
    -\psi([u,x]).
\]
Since \([U,x]\subseteq\K c\) and \(\psi(c)=0\), we have
\((\psi\bullet x)(U)=0\).
Applying Lemma~\ref{lem:M2} to \(\psi\bullet x\), we get
\((\psi\bullet x)(T)=0\).
For \(e\in T\), this means
\[
    0=(\psi\bullet x)(e)
    =
    [\psi(e),x]-\psi([e,x])
    =
    -\psi([e,x]).
\]
But \([T,M_1]=M_0\) by \eqref{eq:M-low6}.  Hence
\(\psi(M_0)=0\).

The relations \eqref{eq:M-low7}--\eqref{eq:M-low9} give
\([M_-,M_0]=M_-\), while Lemma~\ref{lem:M-cent2} gives
\(\Cent_M(M_-)=U\).  Moreover,
\([M_0,M_0]\simeq\mathfrak{sl}_2\), and \eqref{eq:M-low7} shows that this
algebra has no non-zero fixed vector in \(U\).  Hence
\(\Cent_M(M_-)\cap\Cent_M([M_0,M_0])=0\).  Lemma~\ref{lem:propagation},
applied with \(V=M_-\) and \(A=M_0\), gives \(\psi=0\).  Therefore
\(\varphi=\lambda\id_M\).
\end{proof}

\section{Filtered Cartan-type algebras and the classification}
\label{sec:filtered-cartan}

We now pass from graded to filtered Cartan type.  The compatibility property
for the associated graded algebra is
\cite[Property~(6.2.2)]{StradeI}; the assertions for filtered algebras of types
\(W\) and \(K\) follow from
\cite[Corollaries~6.1.7 and~6.6.3]{StradeI}.  We use the formulation in
\cite[Section~3.4.2]{Bois2009}; see also
\cite{BlockWilson1988,PremetStrade2007,PremetStrade2008,StradeIII}.

\begin{theorem}\label{thm:compatibility}
Let \(\K\) be an algebraically closed field of characteristic \(p>3\), and let
\(L\) be a finite-dimensional simple Lie algebra of filtered Cartan type.
Then \(L\) has a finite descending Lie filtration whose associated graded
algebra \(G=\gr L\) can be identified with a graded subalgebra of a full
Cartan-type algebra, and one of the following holds:
\[
\begin{gathered}
 G=W(m;\uln),\qquad
    m\geqslant1,\quad \uln\in\mathbb Z_{>0}^{m};\\
 S(m;\uln)^{(1)}\subseteq G\subseteq S(m;\uln),\qquad
    m\geqslant3,\quad \uln\in\mathbb Z_{>0}^{m};\\
 H(2r;\uln)^{(2)}\subseteq G\subseteq H(2r;\uln),\qquad
    r\geqslant1,\quad \uln\in\mathbb Z_{>0}^{2r};\\
 K(2r+1;\uln)^{(1)}\subseteq G\subseteq K(2r+1;\uln),\qquad
    r\geqslant1,\quad \uln\in\mathbb Z_{>0}^{2r+1}.
\end{gathered}
\]
All inclusions preserve the displayed gradings.  If the first case holds with
\(m=1\), then \(L\simeq W(1;n')\) for some \(n'\geqslant1\).
\end{theorem}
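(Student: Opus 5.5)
This statement is a compatibility result imported from the structure theory rather than something proved from scratch here, so my plan is to assemble it from the cited sources.

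\emph{Step one: construct the filtration.} By definition, a simple Lie algebra \(L\) of filtered Cartan type contains a maximal subalgebra \(L_{(0)}\) such that the associated Weisfeiler filtration
\[
L_{(-1)}\supseteq L_{(0)}\supseteq L_{(1)}\supseteq\cdots,
\qquad
L_{(i+1)}=\{x\in L_{(i)}\mid [x,L_{(-1)}]\subseteq L_{(i)}\},
\]
has an associated graded algebra of Cartan type. Here \(L_{(-1)}/L_{(0)}\) is an irreducible \(L_{(0)}\)-submodule of \(L/L_{(0)}\), and the filtration is extended downward by powers of \(L_{(-1)}\). This construction gives a finite descending Lie filtration satisfying \eqref{eq:filtered-bracket}. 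I will follow \cite[Section~3.4.2]{Bois2009} for the precise formulation.

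\emph{Step two: identify \(\gr L\).} Property~(6.2.2) of \cite{StradeI} says that \(\gr L\) satisfies \(X(m;\uln)^{(\infty)}\subseteq\gr L\subseteq X(m;\uln)\) as graded algebras, for some Cartan type \(X\in\{W,S,H,K\}\). Two adjustments are needed to reach the displayed list.
\begin{enumerate}
\item For types \(W\) and \(K\), the derived algebra and the full algebra must be reconciled with the statement. Corollaries~6.1.7 and~6.6.3 of \cite{StradeI} show that a graded algebra squeezed between \(W(m;\uln)^{(1)}=W(m;\uln)\) and \(W(m;\uln)\) must equal \(W(m;\uln)\). For type \(K\), the bounds \(K^{(1)}\subseteq G\subseteq K\) are exactly the ones stated.
\item For types \(S\) and \(H\), the derived algebras \(S^{(1)}\) and \(H^{(2)}\) are the simple ones. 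So the sandwich form is already the one displayed, provided the restrictions \(m\geqslant3\) for \(S\) and \(r\geqslant1\) for \(H\) are checked against the standard exclusions: \(S(2)\) is identified with \(H(2)\).
\end{enumerate}

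\emph{Step three: the case \(m=1\).} When \(\gr L=W(1;\uln)\), I will use the fact that every filtered deformation of \(W(1;n)\) is trivial. This is the one-variable case of the cited corollaries on filtered Witt algebras and gives \(L\simeq W(1;n')\).

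The main obstacle is not a calculation. It is making sure the quoted sources deliver exactly the listed bounds, in the stated gradings, for every \(p>3\). In particular, the \(K\)-type derived-algebra codimension when \(p\mid(2r+4)\) has to be checked, as does the \(m=1\) rigidity. I would cross-check these against \cite{StradeIII} and \cite{PremetStrade2008}.
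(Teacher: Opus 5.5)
Your plan matches the paper: the paper gives no independent proof of this theorem and imports it from Strade's Property~(6.2.2), Corollaries~6.1.7 and~6.6.3, and Bois's formulation in Section~3.4.2, which are exactly the sources you assemble. One slip is in Step two: the collapse of \(W(m;\uln)^{(1)}\subseteq G\subseteq W(m;\uln)\) to \(G=W(m;\uln)\) is a tautology once \(W(m;\uln)^{(1)}=W(m;\uln)\) and needs no corollary; the paper instead cites those corollaries for the assertions about the filtered algebra itself, such as the conclusion \(L\simeq W(1;n')\) that you correctly draw from them in Step three.
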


Suppose that \(L\) is not a Zassenhaus algebra.  In the first case of
Theorem~\ref{thm:compatibility}, the final assertion excludes \(m=1\), so
Theorem~\ref{thm:W} applies.  Theorems~\ref{thm:S}, \ref{thm:H}, and
\ref{thm:K} apply in the other three cases.  Thus
\(\HomLie(\gr L)=\K\id_{\gr L}\), and Lemma~\ref{lem:filtered} gives the
following result.

\begin{proposition}\label{prop:filtered-cartan}
Let \(\K\) be an algebraically closed field of characteristic \(p>3\), and let
\(L\) be a finite-dimensional simple Lie algebra of filtered Cartan type.  If
\(L\) is not isomorphic to a Zassenhaus algebra, then
\[
 \HomLie(L)=\K\id_L.
\]
\end{proposition}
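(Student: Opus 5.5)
The plan is to deduce Proposition~\ref{prop:filtered-cartan} from Lemma~\ref{lem:filtered}, using the filtration supplied by Theorem~\ref{thm:compatibility}. Let \(L\) be a finite-dimensional simple Lie algebra of filtered Cartan type that is not isomorphic to a Zassenhaus algebra. Theorem~\ref{thm:compatibility} gives a finite descending Lie filtration of \(L\) with boundary conventions as in Section~\ref{sec:preliminaries}, so that \(L\) is a filtered Lie algebra in our sense. It also identifies \(G=\gr L\) with one of four graded Cartan-type algebras. By Lemma~\ref{lem:filtered}, it then suffices to show that \(\HomLie(G)=\K\id_G\).

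The core of the argument is a case split according to Theorem~\ref{thm:compatibility}.

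\textbf{Type \(W\).} Here \(G=W(m;\uln)\). If \(m=1\), the final assertion of Theorem~\ref{thm:compatibility} forces \(L\simeq W(1;n')\), which contradicts our hypothesis. Hence \(m\geqslant2\), and Theorem~\ref{thm:W} gives \(\HomLie(G)=\K\id_G\).

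\textbf{Type \(S\).} Here \(S(m;\uln)^{(1)}\subseteq G\subseteq S(m;\uln)\) with \(m\geqslant3\). Theorem~\ref{thm:S} was stated precisely for such intermediate algebras \(G\), so it applies directly.

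\textbf{Types \(H\) and \(K\).} Theorems~\ref{thm:H} and~\ref{thm:K} likewise cover every intermediate algebra between the derived algebra and the full algebra, with \(r\geqslant1\).

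One point needs checking in each case: those theorems were proved for the standard realisations with their standard gradings. Theorem~\ref{thm:compatibility} asserts that the identification of \(\gr L\) respects the displayed gradings. In any case, \(\HomLie\) depends only on the Lie algebra structure, so an abstract isomorphism \(\gr L\simeq G\) already suffices. Conjugating by the isomorphism carries \(\HomLie(G)=\K\id_G\) over to \(\HomLie(\gr L)=\K\id_{\gr L}\).

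Once \(\HomLie(\gr L)\) is known to be scalar, Lemma~\ref{lem:filtered} finishes the proof. A nonzero \(\varphi\in\HomLie(L)\) has principal symbol of degree \(0\) equal to \(\lambda\id\). The difference \(\varphi-\lambda\id_L\) would then have a nonzero symbol of positive degree, which must vanish. Hence \(\varphi=\lambda\id_L\).

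The main obstacle is not computational. It lies in verifying that Theorem~\ref{thm:compatibility} is used with the right hypotheses: the filtration must be finite with the boundary conventions required in Section~\ref{sec:preliminaries}, and the Zassenhaus exclusion must correctly rule out the single graded case (\(W(1;n)\)) where the scalar conclusion fails. The latter matters because \(\HomLie(W(1;n))\) contains the non-scalar maps \(M_u\) of Proposition~\ref{prop:zassenhaus}, whose symbols would otherwise break the argument. I would therefore state explicitly that the Zassenhaus hypothesis is used only to exclude \(m=1\).
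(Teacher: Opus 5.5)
Your proposal is correct and follows essentially the same route as the paper. Theorem~\ref{thm:compatibility} supplies the filtration and the list of possible \(\gr L\). The Zassenhaus hypothesis excludes \(m=1\), so Theorems~\ref{thm:W}, \ref{thm:S}, \ref{thm:H} and \ref{thm:K} give \(\HomLie(\gr L)=\K\id_{\gr L}\), and Lemma~\ref{lem:filtered} transfers the scalar conclusion to \(L\).
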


\begin{proof}[Proof of Theorem~\ref{thm:main}]
In characteristic zero, every
finite-dimensional simple Lie algebra is of classical type, so
Proposition~\ref{prop:sl2} and Theorem~\ref{thm:classical} give the first and
third cases of Theorem~\ref{thm:main}.  Suppose that
\(\operatorname{char}\K=p>3\).  By the
Block--Wilson--Premet--Strade classification
\cite{BlockWilson1988,PremetStrade2007,PremetStrade2008}, every
finite-dimensional simple Lie algebra is of classical type, of filtered
Cartan type, or, when \(p=5\), a Melikian algebra.  Proposition~\ref{prop:sl2}, Theorem~\ref{thm:classical},
Propositions~\ref{prop:zassenhaus} and \ref{prop:filtered-cartan}, and
 Theorem~\ref{thm:Melikian} give the corresponding Hom--Lie spaces.  Combining
these results gives precisely the three cases in Theorem~\ref{thm:main}.
\end{proof}

The classification in Theorem~\ref{thm:main} also yields a direct consequence over arbitrary ground fields of characteristic different from \(2\) and \(3\). To apply the theorem after extending scalars to an algebraic closure, we consider absolutely simple Lie algebras, that is, Lie algebras that remain simple under this extension.For a finite-dimensional Lie algebra, the space of Hom-Lie structures commutes with extension of scalars, since the Hom-Jacobi identity is a homogeneous linear condition on the endomorphism. Consequently, both the scalar conslusion and the dimensions of the exceptional Hom-Lie spaces carry over to the original field, giving the following corollary.

%The following consequence over arbirary fields follows by extension of scalars. 
%We now return to arbitrary ground fields of characteristic different from \(2\) and \(3\), including characteristic zero. Let \(\overline \K\) be an algebraic closure of \(\K\). A finite-dimensional Lie algebra \(L\) over \(\K\) is called \emph{absolutely simple} if \(L_{\overline \K}=L\otimes_{\K}\overline \K\) is simple. In chararcteristic \(p>3\), we call \(L\) a \(\K\)-form of a Zassenhaus algebra if
%\[ L_{\overline \K}\cong W_{\overline \K}(1;n) \] for some \(n \geq 1\).

\begin{corollary}\label{cor:arbi-fields}
Let \(\K\) be an arbitrary field with \(\operatorname{char}\K \neq 2,3\), and let \(L\) be a finite-dimensional absolutely simple Lie algebra over \(\K\); that is \(L_{\overline \K}=L\otimes_{\K}\overline \K\) is simple, where \(\overline \K\) be an algebraic closure of \(\K\). Then
\[
\operatorname{HomLie}_\K(L) = \K\operatorname{id}_L
\]
unless \(\dim_\K L=3\), or \(\operatorname{char}\K=p>3\) and \(L_{\overline \K}\cong W_{\overline \K}(1;n)\) for some \(n \geq 1\).
In these two exceptional cases, the dimensions of \(\operatorname{HomLie}_\K(L)\) are \(6\) and \(p^n\), respectively.
\end{corollary}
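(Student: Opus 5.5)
The plan is to reduce everything to Theorem~\ref{thm:main} over \(\overline\K\) by showing that \(\HomLie\) commutes with extension of scalars. Fix a \(\K\)-basis \(b_1,\ldots,b_N\) of \(L\) with structure constants in \(\K\). An endomorphism \(\varphi\) is determined by its matrix \((\varphi_{ij})\). The condition \(J_\varphi(b_i,b_j,b_k)=0\) for all \(i,j,k\), expanded in the basis, is a finite homogeneous system of linear equations in the \(\varphi_{ij}\) with coefficients in \(\K\). The solution space over \(\K\) is \(\HomLie_\K(L)\). The solution space over \(\overline\K\) of the same system is \(\HomLie_{\overline\K}(L_{\overline\K})\), because \(J_\varphi\) is trilinear and so vanishes on \(L_{\overline\K}\) exactly when it vanishes on the basis. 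The rank of a matrix with entries in \(\K\) does not change under field extension. Hence
\[
\HomLie_{\overline\K}(L_{\overline\K})=\HomLie_\K(L)\otimes_\K\overline\K,
\qquad
\dim_\K\HomLie_\K(L)=\dim_{\overline\K}\HomLie_{\overline\K}(L_{\overline\K}).
\]

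Next apply Theorem~\ref{thm:main} to the simple algebra \(L_{\overline\K}\) over the algebraically closed field \(\overline\K\), whose characteristic is again different from \(2\) and \(3\). There are three cases.

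\begin{enumerate}
\item If \(\dim_\K L=3\), then \(L_{\overline\K}\) is a three-dimensional simple Lie algebra. Over an algebraically closed field of characteristic not \(2\) it is isomorphic to \(\mathfrak{sl}_2\); this is the standard classification of three-dimensional simple algebras, which I take as known. Part (i) then gives dimension \(6\).
\item If \(\operatorname{char}\K=p>3\) and \(L_{\overline\K}\simeq W(1;n)\), part (ii) gives dimension \(p^n\).
\item Otherwise \(L_{\overline\K}\) is neither \(\mathfrak{sl}_2\), which has dimension \(3\), nor a Zassenhaus algebra. Part (iii) gives \(\dim\HomLie=1\). Since \(\K\id_L\subseteq\HomLie_\K(L)\) and both have dimension \(1\), we get \(\HomLie_\K(L)=\K\id_L\).
\end{enumerate}

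The only point requiring care is the scalar-extension step, in particular keeping the equations defined over \(\K\). This is routine linear algebra: row reduction over \(\K\) gives a basis of solutions that remains a basis over \(\overline\K\). A minor additional check is that the exceptional cases are correctly separated. A Zassenhaus algebra has dimension \(p^n\ge5\), so it never coincides with the three-dimensional case and the two exceptions do not overlap.
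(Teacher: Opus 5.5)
Your proposal is correct and follows the paper's argument. The Hom--Jacobi condition is a homogeneous linear system over \(\K\), so \(\HomLie\) commutes with extension to \(\overline\K\), and Theorem~\ref{thm:main} applied to \(L_{\overline\K}\) gives the alternatives and dimensions. There are two differences, both of presentation only: you prove the base-change isomorphism directly from the invariance of rank, where the paper cites Zusmanovich; and you state explicitly the standard fact that a three-dimensional simple algebra over \(\overline\K\) is isomorphic to \(\mathfrak{sl}_2\), which the paper leaves implicit.
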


\begin{proof}
For finite-dimensional \(L\), the Hom-Jacobi identity is a homogeneous linear system in the matrix entries of an endomorphism. Extension of scalars therefore gives 
\[
\operatorname{HomLie}_\K(L)\otimes_\K\overline \K
\cong
\operatorname{HomLie}_{\overline \K}(L_{\overline \K})
\]
see \cite[Section~4]{Zusmanovich2023}. Since \(L_{\overline \K}\) is simple, Theorem~\ref{thm:main} applies. The asserted alternatives and dimensions follow from the invariance of dimension under extension of scalars.
\end{proof}

\section*{Acknowledgements}

This work was partially supported by the National Natural Science Foundation
of China (grant no.~12101111).  H.~Zhu acknowledges support from an NTU
Research Scholarship.

\bigskip
\noindent
J.S.:\\
School of Mathematics, Southeast University, Nanjing, Jiangsu 210096, China\\
sjcsrc0927@163.com

\vspace{5mm}

\noindent
C.Z.:\\
College of Sciences, Northeastern University, Shenyang, Liaoning 110004, China\\
zhangchi@mail.neu.edu.cn

\vspace{5mm}

\noindent
H.Z.:\\
Division of Mathematical Sciences, Nanyang Technological University, 637371,
Singapore\\
zhuh0031@e.ntu.edu.sg


\begin{thebibliography}{99}

\bibitem{BlockWilson1988}
R.E.~Block and R.L.~Wilson,
\textit{Classification of the restricted simple Lie algebras},
J.~Algebra \textbf{114} (1988), no.~1, 115--259.

\bibitem{Bois2009}
J.M.~Bois,
\textit{Generators of simple Lie algebras in arbitrary characteristics},
Math.~Z. \textbf{262} (2009), no.~4, 715--741.

\bibitem{HLS}
J.T.~Hartwig, D.~Larsson, and S.D.~Silvestrov,
\textit{Deformations of Lie algebras using \(\sigma\)-derivations},
J.~Algebra \textbf{295} (2006), no.~2, 314--361.

\bibitem{JL}
Q.~Jin and X.~Li,
\textit{Hom-Lie algebra structures on semi-simple Lie algebras},
J.~Algebra \textbf{319} (2008), no.~4, 1398--1408.

\bibitem{LegerLuks2000}
G.F.~Leger and E.M.~Luks,
\textit{Generalized derivations of Lie algebras},
J.~Algebra \textbf{228} (2000), no.~1, 165--203.

\bibitem{MS}
A.~Makhlouf and S.D.~Silvestrov,
\textit{Hom-algebra structures},
J.~Gen.\ Lie Theory Appl.\ \textbf{2} (2008), no.~2, 51--64.

\bibitem{MakhloufZusmanovich2018}
A.~Makhlouf and P.~Zusmanovich,
\textit{Hom--Lie structures on Kac--Moody algebras},
J.~Algebra \textbf{515} (2018), 278--297.

\bibitem{PremetStrade2006}
A.~Premet and H.~Strade,
\textit{Classification of finite-dimensional simple Lie algebras in prime
characteristics},
in \textit{Representations of Algebraic Groups, Quantum Groups, and Lie Algebras},
Contemp.~Math. \textbf{413}, American Mathematical Society, Providence, RI,
2006, 185--214.

\bibitem{PremetStrade2007}
A.~Premet and H.~Strade,
\textit{Simple Lie algebras of small characteristic. V. The non-Melikian case},
J.~Algebra \textbf{314} (2007), no.~2, 664--692.

\bibitem{PremetStrade2008}
A.~Premet and H.~Strade,
\textit{Simple Lie algebras of small characteristic. VI. Completion of the
classification},
J.~Algebra \textbf{320} (2008), no.~9, 3559--3604.

\bibitem{StradeI}
H.~Strade,
\textit{Simple Lie Algebras over Fields of Positive Characteristic.
I. Structure Theory},
Walter de Gruyter, Berlin, 2004.

\bibitem{StradeIII}
H.~Strade,
\textit{Simple Lie Algebras over Fields of Positive Characteristic.
III. Completion of the Classification},
Walter de Gruyter, Berlin, 2013.

\bibitem{WuLiu2015}
M.~Wu and W.~Liu,
\textit{Multiplicative Hom-structures on finite-dimensional Lie algebras of
Cartan type},
Adv.~Math.~(China) \textbf{44} (2015), no.~3, 354--362.

\bibitem{XJL}
W.~Xie, Q.~Jin, and W.~Liu,
\textit{Hom-structures on semi-simple Lie algebras},
Open Math.\ \textbf{13} (2015), 617--630.

\bibitem{XL}
W.~Xie and W.~Liu,
\textit{Hom-structures on simple graded Lie algebras of finite growth},
J.~Algebra Appl.\ \textbf{16} (2017), no.~8, 1750154, 18~pp.

\bibitem{Zusmanovich2023}
P.~Zusmanovich,
\textit{When Hom--Lie structures form a Jordan algebra},
J.~Algebra Appl.\ \textbf{22} (2023), no.~9, 2350197, 12~pp.

\end{thebibliography}
\end{document}